\title{Tur\'an $H$-densities for 3-graphs}
\author{Victor Falgas-Ravry \thanks{School of Mathematical Sciences, Queen Mary, University of London, Mile End Road, London E1 4NS, U.K. Email: {\tt v.falgas-ravry@qmul.ac.uk}.}
\and Emil R. Vaughan \thanks{School of Electronic Engineering and Computer Science, Queen Mary, University of London, Mile End Road, London E1 4NS, U.K. Email: {\tt e.vaughan@qmul.ac.uk}. Supported by EPSRC grant EP/H016015/1.}}
\newtheorem{theorem}{Theorem}
\newtheorem{corollary}[theorem]{Corollary}
\newtheorem{lemma}[theorem]{Lemma}
\newtheorem{proposition}[theorem]{Proposition}
\newtheorem{conjecture}{Conjecture}
\newtheorem{question}[conjecture]{Question}
\DeclareMathOperator{\ex}{ex}
\providecommand{\size}[1]{\left|#1\right|}
\newcommand{\Text}[1]{\text{\textnormal{#1}}}
\begin{document}
\bibliographystyle{plain}
\maketitle

\begin{abstract}
Given an $r$-graph $H$ on $h$ vertices, and a family $\mathcal{F}$ of forbidden subgraphs, we define $\ex_{H}(n, \mathcal{F})$ to be the maximum number of induced copies of $H$ in an $\mathcal{F}$-free $r$-graph on $n$ vertices. Then the \emph{Tur\'an $H$-density} of $\mathcal{F}$ is the limit 
\[\pi_{H}(\mathcal{F})= \lim_{n\rightarrow \infty}\ex_{H}(n, \mathcal{F})/\binom{n}{h}. \]
This generalises the notions of \emph{Tur\'an density} (when $H$ is an $r$-edge), and \emph{inducibility} (when $\mathcal{F}$ is empty). Although problems of this kind have received some attention, very few results are known.

We use Razborov's semi-definite method to investigate Tur\'an $H$-densities for $3$-graphs. In particular, we show that
\[\pi_{K_4^-}(K_4) = 16/27,\]
with Tur\'an's construction being optimal. We prove a result in a similar flavour for $K_5$ and make a general conjecture on the value of $\pi_{K_t^-}(K_t)$. We also establish that
\[\pi_{4.2}(\emptyset)=3/4,\]
where $4.2$ denotes the $3$-graph on $4$ vertices with exactly $2$ edges. The lower bound in this case comes from a random geometric construction strikingly different from previous known extremal examples in $3$-graph theory. We give a number of other results and conjectures for $3$-graphs, and in addition consider the inducibility of certain directed graphs. Let $\vec{S}_k$ be the \emph{out-star} on $k$ vertices; i.e{.} the star on $k$ vertices with all $k-1$ edges oriented away from the centre. We show that 
\[\pi_{\vec{S}_3}(\emptyset)=2\sqrt3-3,\]
with an iterated blow-up construction being extremal. This is related to a conjecture of Mubayi and R\"odl on the Tur\'an density of the 3-graph $C_5$. We also determine $\pi_{\vec{S}_k}(\emptyset)$ when $k=4$, and conjecture its value for general $k$.
\end{abstract}

\section{Introduction}
\subsection{Basic notation and definitions}
Given $n \in \mathbb{N}$, write $[n]$ for the integer interval $\{1,2, \dots, n\}$. Let $r \in \mathbb{N}$. An \emph{$r$-graph} or \emph{$r$-uniform hypergraph} $G$ is a pair $G=(V,E)$, where $V=V(G)$ is a set of \emph{vertices} and $E=E(G) \subseteq V^{(r)}=\{A \subseteq V: \ \vert A \vert = r\}$ is a set of \emph{$r$-edges}. We shall often write $x_1x_2\cdots x_r$ as a short-hand for the $r$-edge $\{x_1,x_2, \dots, x_r\}$.

Given a family of $r$-graphs $\mathcal{F}$, we say that $G$ is $\mathcal{F}$-free if it contains no member of $\mathcal{F}$ as a subgraph. A classical aim of extremal hypergraph theory is to determine the maximum number of $r$-edges that an $\mathcal{F}$-free $r$-graph on $n$ vertices may contain. We call the corresponding function of $n$ the \emph{T\'uran number} of $\mathcal{F}$, and denote it by
\[\ex(n, \mathcal{F}) = \max\left\{ \size{E(G)} \ : \text{ $G$ is $\mathcal{F}$-free}, \ \size{V(G)}=n\right\}.\]

In this paper we shall be concerned with the following generalisation of the Tur\'an number. Given an $r$-graph $H$ on $h$ vertices, and an $r$-graph $G$ on $n \geq h$ vertices, let $e_H(G)$ denote the number of $h$-sets from $V(G)$ that induce a copy of $H$ in $G$. (So for example if $H$ is an $r$-edge, then $e_H(G)$ counts the number of edges in $G$.) Then, given a family of forbidden $r$-graphs $\mathcal{F}$, we define the \emph{Tur\'an $H$-number} of $\mathcal{F}$, denoted $\ex_{H}(n, \mathcal{F})$, to be the maximum number of induced copies of $H$ that an $\mathcal{F}$-free $r$-graph on $n$ vertices may contain:
\[\ex_{H}(n, \mathcal{F}) = \max\left\{e_H(G): \text{ $G$ is $\mathcal{F}$-free}, \ \size{V(G)}=n\right\}.\]
In general, the Tur\'an $H$-number is, like the usual Tur\'an number, hard to determine, and we are interested instead in the asymptotic proportion of $h$-vertex subsets that induce a copy of $H$. The following is well-known.

\begin{proposition}\label{averaging}
Let $\mathcal{F}$ be a family of $r$-graphs and let $H$ be an $r$-graph on $h$ vertices. Then the limit
\[\pi_{H}(\mathcal{F})= \lim_{n \rightarrow \infty} \ex_{H}(n, \mathcal{F})/\binom{n}{h} \]
exists.
\end{proposition}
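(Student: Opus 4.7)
The plan is to show that the sequence $a_n := \ex_H(n,\mathcal{F})/\binom{n}{h}$ is monotone non-increasing for $n \geq h$. Since each $a_n$ lies in $[0,1]$, monotonicity will immediately give convergence.

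To establish monotonicity, I would use a standard averaging/link argument. Fix $n > h$ and let $G$ be an $\mathcal{F}$-free $r$-graph on $n$ vertices attaining the maximum, so $e_H(G) = \ex_H(n,\mathcal{F})$. For each $(n-1)$-subset $S \subseteq V(G)$, the induced subgraph $G[S]$ is itself $\mathcal{F}$-free (since being $\mathcal{F}$-free is closed under taking induced subgraphs), and so $e_H(G[S]) \leq \ex_H(n-1,\mathcal{F})$. The key identity is the double count
\[ \sum_{S \in V(G)^{(n-1)}} e_H(G[S]) = (n-h)\, e_H(G), \]
since every $h$-set inducing a copy of $H$ in $G$ is contained in exactly $n-h$ of the $(n-1)$-subsets of $V(G)$. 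Combining this with the upper bound on each summand gives
\[ (n-h)\,\ex_H(n,\mathcal{F}) \leq n\,\ex_H(n-1,\mathcal{F}). \]

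Dividing through by $n\binom{n-1}{h}$ and using the identity $(n-h)\binom{n}{h} = n\binom{n-1}{h}$ then yields $a_n \leq a_{n-1}$, so $(a_n)_{n \geq h}$ is non-increasing and bounded below by $0$, hence converges. This limit is, by definition, $\pi_H(\mathcal{F})$.

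There is no real obstacle here: the argument is the standard Katona--Nemetz--Simonovits-style averaging that underlies the existence of the classical Tur\'an density, adapted so that one counts induced copies of $H$ rather than edges. The only point that requires a moment of care is the combinatorial factor $n-h$ in the double count (rather than, say, $n-r$), which reflects that the object being counted spans $h$ vertices. Everything else is a direct translation of the classical proof.
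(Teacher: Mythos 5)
Your proof is correct and is essentially the same averaging argument the paper uses (the paper averages over $n$-subsets of an $(n+1)$-vertex extremal graph, whereas you average over $(n-1)$-subsets of an $n$-vertex one, but these are the same argument shifted by one). You have simply filled in the details of the double count, including the correct combinatorial factor $n-h$, that the paper leaves implicit.
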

\begin{proof}
For $n\geq h$, it follows by averaging over $n$-vertex subsets that
\[\ex_{H}(n+1, \mathcal{F})/\binom{n+1}{h}\leq \ex_{H}(n, \mathcal{F})/\binom{n}{h}.\]
Thus the sequence $\ex_H(n, \mathcal{F})/\binom{n}{h}$ is nonincreasing, and because it is bounded below (e.g{.} by $0$), it is convergent. 
\end{proof}

We call $\pi_{H}(\mathcal{F})$ the \emph{Tur\'an $H$-density} of $\mathcal{F}$. In the case where $H$ is the $r$-graph on $r$ vertices with a single edge, we recover the classical \emph{Tur\'an density}, $\pi(\mathcal{F})$.

It is easy to see that Proposition~\ref{averaging} and the definitions of $\ex_H(n, \mathcal{F})$ and $\pi_{H}(\mathcal{F})$ when $H$ and $\mathcal{F}$ consist of $r$-graphs could just as well have been made in the setting of directed $r$-graphs. We let our definitions carry over \emph{mutatis mutandis}.

In this paper, we shall mainly investigate 3-graphs, although we shall make a digression into directed $2$-graphs in Section~3.

	\subsection{Previous work on inducibility}

When $\mathcal{F}=\emptyset$, $\pi_{H}(\emptyset)$ is known as the \emph{inducibility} of $H$. The inducibility of 2-graphs was first investigated by Pippenger and Golumbic~\cite{PG75} and later by Exoo~\cite{E86}. Motivated by certain questions in Ramsey Theory, Exoo proved some general bounds on $\pi_{H}(\emptyset)$ as well as giving some constructions for small $H$ with $\size{V(H)} \leq 4$. Bollob\'as, Nara and Tachibana~\cite{BNT86} then proved that $\pi_{K_{t,t}}(\emptyset)=(2t)!/2^t{(t!)}^2$, where $K_{t,t}$ is the balanced complete bipartite graph on $2t$ vertices, $K_{t,t}= ([2t], \{\{ij\}:\ i \leq t < j\})$. What is more, they determined $\ex_{K_{t,t}}(n, \emptyset)$ exactly, with the optimal construction a balanced complete bipartite graph. More generally, Brown and Sidorenko~\cite{BS94} showed that if $H$ is complete bipartite then the graphs attaining the Tur\'an $H$-number may be chosen to be themselves complete bipartite.

Given a graph $H$  and an integer $b\geq 1$, the \emph{(balanced) $b$-blow-up} of $H$, denoted $H(b)$, is the graph on $b \size{V(H)}$ vertices obtained by taking for every vertex $x \in V(H)$  a set of $b$ vertices $x_1,x_2, \dots, x_b$ and putting an edge between $x_i$ and $y_j$ if and only if $xy \in E(H)$. Bollob\'as, Egawa, Harris and Jin~\cite{BEHJ95} proved that for all $t \in \mathbb{N}$ and all $b$ sufficiently large, the Tur\'an $K_t(b)$-number $\ex_{K_t(b)}(n, \emptyset)$  is attained by balanced blow-ups of $K_t$. This was recently generalised in an asymptotic sense by Hatami, Hirst and Norine~\cite{HHN11} who proved that for any graph $H$ and for all $b$ sufficiently large, the Tur\'an $H(b)$-density is given by considering the `limit' of balanced blow-ups of $H$. Their proof relied on the use of weighted graphs.

Finally, several $H$-density results for small $H$ were obtained this year by Grzesik~\cite{G11}, Hatami, Hladk\'y, Kr\'al, Norine and Razborov~\cite{HHKNR11a}, Hirst~\cite{H11} and Sperfeld~\cite{S11}, all using the semi-definite method of Razborov~\cite{R07}. Grzesik~\cite{G11}, and independently Hatami, Hladk\'y, Kr\'al, Norine and Razborov~\cite{HHKNR11a}, proved an old conjecture of Erd\H os~\cite{E84} that the number of (induced) copies of the $5$-cycle $C_5=([5],\{12,23,34,45,51\})$ in a triangle-free graph on $n$ vertices is at most $(n/5)^5$. This bound is attained by a balanced blow-up of $C_5$, thus establishing that
\[\pi_{C_5}(K_3)=24/625.\]
 To describe the other two sets of results, we need to make some more definitions. Let
\[K_{1,1,2}=([4], \{12,13,14, 23,24\}),\  \Text{paw}=([4], \{12,23, 31, 14\})\]
and
\[\vec{C}_3=([3], \{\vec{12}, \vec{23}, \vec{31}\}), \ \vec{K}_2\sqcup E_1= ([3], \{\vec{12}\}).\]
Then Hirst showed that
\[\pi_{K_{1,1,2}}(\emptyset)=72/125, \  \pi_{\text{paw}}(\emptyset) =3/8,\]
with extremal configurations a balanced blow-up of $K_5$ and the complement of a balanced blow-up of $([4],\{12,34\})$ respectively. Sperfeld proved
\[\pi_{\vec{C}_3}(\emptyset)=1/4, \ \pi_{\vec{K}_2\sqcup E_1}(\emptyset)=3/4,\]
with extremal configurations a random tournament on $n$ vertices and the disjoint union of two tournaments on $n/2$ vertices respectively.

	\subsection{Flag algebras and Flagmatic} \label{flagmaticsec}

Similarly to the works cited above~\cite{G11, HHKNR11a, H11, S11}, the upper bounds on Tur\'an $H$-densities we present in this paper have been obtained using the semi-definite method of Razborov~\cite{R07}. A by-product of the theory of flag algebras, the semi-definite method gives us a systematic way of proving linear inequalities between subgraph densities. It has recently been used in a variety of contexts and has yielded many new results and improved bounds. (See e.g{.} \cite{BT10, BT11, FRV11, G11, HHKNR11a, HHKNR11b, H11, KMS11, R10, R11, S11}.)

While it is clearly a powerful and useful tool in extremal combinatorics, the semi-definite method requires its users to overcome two barriers. First of all, a presentation of the method is usually given in the language of flag algebras, quantum graphs or graphons, which, while not impenetrable, is certainly forbidding at first. Second, the method involves numerous small computations, the enumeration of large graph families and optimisation of the entries of large positive semi-definite matrices; none of which can practically be done by hand. The assistance of a computer program is therefore necessary to use the semi-definite method in any nontrivial fashion.

In our earlier paper~\cite{FRV11}, we sought to remove these two obstacles by giving an elementary presentation of the semi-definite method from the point of view of extremal combinatorics, stripping it away from the more general framework of flag algebras, and by releasing `Flagmatic', an open-source implementation of Razborov's semi-definite method.

Additionally, in an effort to avoid having large matrices and lists of graphs cluttering the main body of the paper, we have used Flagmatic to produce certificates of our results. These certificates, along with Flagmatic, can be downloaded from our website:

\begin{quote}
\url{http://www.maths.qmul.ac.uk/~ev/flagmatic/}
\end{quote}

The certificates are also given in the ancillary files section of our arXiv submission. The certificates are in a straight-forward human-readable format, which is documented in our previous paper \cite{FRV11}. The website also contains an independent checker program called \verb|inspect_certificate.py|, which can be used to examine the certificates and help verify our proofs.

We shall not repeat here our introduction to the semi-definite method, nor our discussion of certificates and checker programs, but refer the reader back to~\cite{FRV11} for details and use Flagmatic as a `black box' for the remainder of this paper. 

Finally, let us note that some information on extremal constructions can sometimes be extracted from proofs via the semi-definite method. We address this, and in particular the issue of stability, in a forthcoming paper~\cite{FRV12}.

	\subsection{Contents and structure of the paper}

Let us define formally the $3$-graphs that we study in this paper. First of all, we have the complete 3-graph on $4$ vertices, $K_4$, also known as the \emph{tetrahedron}. We shall also be interested in $K_4^-$, the unique (up to isomorphism) $3$-graph on $4$ vertices with exactly $3$ edges, and in the \emph{(strong) $5$-cycle}, $C_5=([5], \{123,234,345,451,512\})$. Let also $K_t$ denote the complete $3$-graph on $t$ vertices and $K_t^-$ the $3$-graph obtained from $K_t$ by deleting a $3$-edge, and let $H_6$ be the $3$-graph obtained from $C_5$ by adding  a new vertex labelled `$6$' to the vertex set  and adding the following five edges: $136,356, 526,246,416$.

A $3$-graph is said to have \emph{independent neighbourhoods} if for any pair of distinct vertices $x,y$, the joint neighbourhood of $x,y$, \[\Gamma_{xy}=\{z : \text{ $xyz$ is an edge}\}\]
is an independent set. Having independent neighbourhoods is easily seen to be equivalent to not containing the graph $F_{3,2}=([5], \{123,124,125,345\})$ as a subgraph.

Finally, following the notation used by Flagmatic, we write $m.k$ for the collection of all $3$-graphs on $m$ vertices spanning exactly $k$ edges, up to isomorphism. For example, $4.3=\{K_4^-\}$.

Our exact results for Tur\'an $H$-densities of $3$-graphs are listed in the following table:

\begin{center}
\begin{tabular}{l p{6.5cm}}
Result & Extremal construction \\ \hline
$\pi_{K_4^-}(K_4) = 16/27$ & Tur\'an's construction: balanced blow-up of $([3], \{112,223,331, 123\})$.\\

$\pi_{4.2}(\emptyset) = 3/4$ & Random geometric construction; see Theorem~\ref{4.2}.\\

$\pi_{4.2}(C_5, F_{3,2}) = 9/16$ & Balanced blow-up of $K_4$.\\

$\pi_{4.2}(K_4^-, F_{3,2}) = 5/9$ & Balanced blow-up of $H_6$.\\

$\pi_{4.2}(K_4^-,C_5, F_{3,2}) = 4/9$ & Balanced blow-up of a 3-edge.\\

$\pi_{K_4}(F_{3,2}) = 3/32$ & Balanced blow-up of $K_4$.\\

$\pi_{K_4^-}(F_{3,2}) = 27/64$ & Unbalanced blow-up of $([2], \{112\})$.\\

$\pi_{5.6}(\emptyset) = 20/27$ &  Balanced blow-up of the $3$-graph $([3], \{112, 221, 223, 332, 113, 331\})$. \\
$\pi_{5.7}(\emptyset) = 20/27$ & Balanced blow-up of the $3$-graph $([3], \{111, 222, 333, 112, 223, 331, 123\})$. \\
$\pi_{5.9}(\emptyset) = 5/8$ & Balanced complete bipartite 3-graph.\\
\hline
\end{tabular}
\end{center}

In addition, we prove two inducibility results for directed graphs. 
We define the \emph{out-star} of order $k$ to be the directed graph
\[\vec{S}_k= ([k], \{\vec{1i}: \ i \in [k]\setminus\{1\}\}). \]
We prove that
\[\pi_{\vec{S}_3}(\emptyset)= 2\sqrt{3}-3,\] 
with the extremal construction being an unbalanced blow-up of $\vec{S}_2$, iterated inside the part corresponding to the vertex labelled $2$. (Here `iterated' just means: repeat the construction inside the vertices that were allocated to part $2$ after each iteration of the construction, until you run out of vertices.) Sperfeld \cite{S11} previously gave bounds for this problem.

This result is interesting to us for two reasons: first of all, this directed $2$-graph problem has a somewhat close and unexpected relation to the Tur\'an problem of maximising the number of $3$-edges in a $C_5$-free $3$-graph. Second, we believe this is the first `simple' instance for which it can be shown that an iterated blow-up construction is extremal. (We elaborate on this in Section~3.)

While it is not directly relevant to $3$-graphs, which are the main focus of this paper, we also determine $\pi_{\vec{S}_4}(\emptyset)$ and make a conjecture regarding the value of $\pi_{\vec{S}_k}(\emptyset)$ for all $k \ge 5$.

Our paper is structured as follows: in Section~2 we present our $3$-graph results. Section~2.1 deals with the case where we forbid $K_4$ and other complete graphs, while Sections~2.2, 2.3 and 2.4 are concerned with the cases where we forbid $C_5$, $K_4^-$ and both $C_5$ and $K_4^-$ respectively. In Section~2.5 we consider $3$-graphs with the independent neighbourhood property, and Section~2.6 gathers our results on inducibilities of $3$-graphs, in particular our proof that $\pi_{4.2}(\emptyset)=3/4$. Finally, in Section~3 we move on to consider directed $2$-graphs and discuss the relation between $\pi_{\vec{S}_3}(\emptyset)$ and a conjecture of Mubayi and R\"odl regarding the Tur\'an density of the $3$-graph $C_5$.

As previously mentioned, the certificates for all the results are available on the Flagmatic website, and in the ancillary files of our arXiv submission. Each certificate has a unique filename, which is given in the following table:

\begin{center} {\small
\begin{tabular}{p{2.5cm}p{3.1cm}|p{2.5cm}p{2.7cm}}
Result & Certificate & Result & Certificate \\
\hline
Theorem~\ref{k4-nok4} & \verb|k4max43.js| & Theorem~\ref{4.2} & \verb|max42.js| \\
Proposition~\ref{k4-noc5} & \verb|c5max43.js| & Proposition~\ref{k4-inducibility} & \verb|max43.js|
and \verb|41max43.js| \\
Proposition~\ref{4.2noc5} & \verb|c5max42.js| & Theorem~\ref{5.6} & \verb|max56.js| \\
Theorem~\ref{4.2noc5f32} & \verb|c5f32max42.js| & Theorem~\ref{5.7} & \verb|max57.js| \\
Proposition~\ref{4.2nok4-} & \verb|k4-max42.js| & Theorem~\ref{5.9} & \verb|max59.js| \\
Theorem~\ref{4.2nok4-nof32} & \verb|k4-f32max42.js| & Proposition~\ref{f32} & \verb|maxf32.js| \\
Proposition~\ref{4.2nok4-c5} & \verb|k4-c5max42.js| & Proposition~\ref{c5} & \verb|maxc5.js| \\
Theorem~\ref{4.2nok4-c5f32} & \verb|k4-c5f32max42.js| & Theorem~\ref{1213} & \verb|maxs3.js| \\
Theorem~\ref{k4-nof32} & \verb|f32max43.js| & Theorem~\ref{121314} & \verb|maxs4.js| \\
Theorem~\ref{k4nof32} & \verb|f32max44.js| & & \\
Proposition~\ref{4.2nof32} & \verb|f32max42.js| and \verb|f32max41.js| & & \\
\hline
\end{tabular} }
\end{center}

\section{Main results}

	\subsection{Forbidding $K_4$}

The problem of determining the Tur\'an density of the complete $3$-graph on $4$ vertices, $K_4$, has been open for more than sixty years. Tur\'an conjectured that the answer is $5/9$, with the lower bound coming from a balanced blow-up of $([3], \{112,223,331, 123\})$.

\begin{conjecture}[Tur\'an] \label{turanconj}
\[\pi(K_4)=5/9.\]
\end{conjecture}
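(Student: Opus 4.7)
Tur\'an's conjecture has resisted attack since 1941, so any plan I describe will be speculative. Let me sketch the strategies I would attempt in the spirit of the methods used in this paper.

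The lower bound $\pi(K_4)\geq 5/9$ is the easy half. I would verify that the balanced $n$-blow-up $T(n)$ of $T=([3],\{112,223,331,123\})$ is $K_4$-free: any $4$-set of vertices contains two in the same part, and a short case check shows that the remaining triples cannot all be edges of $T(n)$. A direct count then gives $e(T(n))/\binom{n}{3}\to 5/9$.

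For the upper bound, my first attempt, in line with the rest of this paper, would be the semi-definite method. I would enumerate all $K_4$-free $3$-graphs on $N$ vertices (for $N$ as large as computation allows), form the flag products and types, and hand the resulting SDP to Flagmatic to maximise edge density. The difficulty — and the reason this is a famous open problem — is that as $N$ grows the best semi-definite bound appears to plateau strictly above $5/9$ (Razborov obtained approximately $0.5617$), so some additional structural input is required. I would therefore combine the SDP with stability: use the semi-definite method to establish that any near-extremal $K_4$-free $3$-graph $G$ must be $o(n^3)$-close to a blow-up of $T$, then finish by a direct combinatorial perturbation argument showing that any local deviation from a blow-up of $T$ strictly decreases the edge density below $5/9$. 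A variant worth trying is to strengthen the SDP by adding, as auxiliary forbidden subgraphs, small configurations that cannot embed in any blow-up of $T$ but which, if present in large enough density, would push $G$ away from the conjectured extremal structure.

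The main obstacle is the stability step. Even granting a flag-algebraic stability result, $K_4$-free $3$-graphs admit many near-extremal competitors (iterated blow-ups of $T$, perturbations that move vertices between the three parts, local re-wirings around triangles, and hybrid constructions gluing blow-ups of $T$ together), and ruling them all out seems to require finer control on the joint neighbourhoods $\Gamma_{xy}$ than the semi-definite method has so far been able to extract. Absent a genuinely new idea I expect any plan of this shape to get stuck precisely here, which is exactly where sixty years of work on the conjecture has stalled.
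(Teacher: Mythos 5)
This is a conjecture, not a theorem: the paper states it (as Conjecture~\ref{turanconj}) but of course does not prove it, and it has been open since 1941. Your verification of the lower bound is correct --- a balanced blow-up of $([3],\{112,223,331,123\})$ is $K_4$-free with asymptotic edge density $5/9$ --- and you rightly identify this as the easy half.

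The gap, as you anticipate, is the upper bound, but the obstruction is sharper than you allow. Your plan hinges on a stability step: show that near-extremal $K_4$-free $3$-graphs are $o(n^3)$-close to a blow-up of Tur\'an's construction, then perturb. That step cannot work even in principle, because the problem is \emph{not} stable: as the paper notes (citing Brown, Kostochka, Fon-der-Flaass and Frohmader), there are many pairwise non-isomorphic $K_4$-free constructions achieving edge density $5/9$, not all close to Tur\'an's construction in edit distance. There is therefore no single target structure for a near-extremal graph to be close to, and the ``direct combinatorial perturbation argument'' at the end of your plan has nothing to perturb towards. This is precisely why the paper proves the genuinely weaker statement $\pi_{K_4^-}(K_4)=16/27$ (Theorem~\ref{k4-nok4}) instead: maximising $K_4^-$-density rather than edge density singles out Tur\'an's construction uniquely, restoring stability and bringing the problem within reach of the semi-definite method. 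Your suggestion of adding auxiliary forbidden configurations is exactly Razborov's idea (forbidding induced $4.1$, i.e.\ $4$-sets spanning exactly one edge), but that too yields only a conditional theorem, not the full conjecture.
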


Many other non-isomorphic $K_4$-free constructions with asymptotic edge-density $5/9$ have since been found~\cite{B83, FDF88, F08, K82}, so that if Tur\'an's conjecture is true, there is no stable extremal configuration and a proof is likely to be very hard.

Razborov observed that Tur\'an's original construction is the only one known in which no $4$-set spans exactly one $3$-edge. Adding in this restriction, he found that he could use the semi-definite method to prove a weaker form of Tur\'an's conjecture:

\begin{theorem}[Razborov~\cite{R10}]
\[\pi(K_4, \Text{ induced }4.1)=5/9.\]
\end{theorem}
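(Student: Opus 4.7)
The proof splits into a lower bound (construction) and an upper bound (via the semi-definite method). For the lower bound I would verify that Turán's construction $T_n$ --- the balanced blow-up of $T = ([3],\{112,223,331,123\})$ --- is simultaneously $K_4$-free and has no induced $4.1$, and that its edge density tends to $5/9$. $K_4$-freeness of such blow-ups is classical. For the induced-$4.1$ condition, partition an arbitrary 4-set by its intersection with the three vertex classes $A_1,A_2,A_3$ of $T_n$; using the (partial) cyclic symmetry of the edge types $A_iA_iA_{i+1}$ together with $A_1A_2A_3$, a short case analysis over the class distributions $(4,0,0)$, $(3,1,0)$, $(2,2,0)$ and $(2,1,1)$ shows that the induced edge count always lies in $\{0,2,3\}$, and is never equal to $1$. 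A direct computation then gives the asymptotic edge density $5/9$.

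For the upper bound I would apply Razborov's semi-definite method, as introduced in the previous subsection, using Flagmatic as a black box. Fix an order $N$ (plausibly $N=7$), and let $\mathcal{G}_N$ be the set of $\{K_4,\text{ induced }4.1\}$-free 3-graphs on $N$ vertices up to isomorphism. For each admissible labelled type $\sigma$ on fewer than $N$ vertices, enumerate the $\sigma$-flags of appropriate size. The method then asks for positive semi-definite matrices $Q_\sigma$, one per type, such that
\[
\tfrac{5}{9} - d_{\mathrm{edge}} \;=\; \sum_{G \in \mathcal{G}_N} \lambda_G\, d(G) \;+\; \sum_{\sigma}\bigl\langle Q_\sigma,\; M_\sigma \bigr\rangle
\]
holds in the flag algebra, with each $\lambda_G \ge 0$ and $M_\sigma$ the matrix of averaged-over-labellings products of $\sigma$-flag densities. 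Evaluating on the density sequence of any admissible 3-graph then yields $d_{\mathrm{edge}} \le 5/9 + o(1)$.

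The main obstacle is locating the matrices $Q_\sigma$: the underlying semi-definite programme must be solved numerically, and the resulting approximate $Q_\sigma$ then rounded to exact rationals while preserving both positive-semidefiniteness and the identity above. This is precisely what Flagmatic automates; the output is a human-readable certificate that can be verified with the \verb|inspect_certificate.py| tool. A secondary difficulty is the choice of $N$: too small an $N$ only yields weak bounds, while the problem size grows rapidly, so one takes the smallest $N$ for which the SDP is feasible at the conjectured optimum $5/9$. A final subtlety worth noting is that the auxiliary restriction (forbidding induced $4.1$) is essential to the method here, since without it the best achievable upper bound would be Razborov's bound of roughly $0.5615$ rather than $5/9$, so the extra hypothesis must enter the SDP in an essential way through the restricted enumeration of admissible $\mathcal{G}_N$.
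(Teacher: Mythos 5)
This statement is cited by the paper as a theorem of Razborov (reference~\cite{R10}) and is \emph{not} proved in the paper itself, so there is no internal proof to compare against. Your reconstruction is nonetheless a faithful account of how the result is obtained. Your lower bound analysis is correct: for a 4-set meeting the three classes of a balanced blow-up of $([3],\{112,223,331,123\})$ in pattern $(4,0,0)$, $(3,1,0)$, $(2,2,0)$, or $(2,1,1)$, the number of induced edges is $0$; $0$ or $3$; $2$; or $3$ respectively, so the construction is simultaneously $K_4$-free and induced-$4.1$-free with edge density $5/9+o(1)$. The upper bound via a flag-algebra/SDP certificate is precisely the route Razborov took in~\cite{R10}. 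Two cosmetic caveats: Razborov's 2010 proof predates Flagmatic, so his certificate was obtained by other means, though the method is the same; and the flag-algebra relation is conventionally stated as a nonnegativity certificate for $5/9 - d_{\mathrm{edge}}$ up to $o(1)$ error terms rather than an exact identity in the algebra as you have written it, but this does not affect the substance of your argument.
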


What is more, Pikhurko~\cite{P11a} showed that  Tur\'an's construction is the unique, stable extremal configuration for this problem. We can show that in fact what Tur\'an's construction does is to maximise the $K_4^-$-density in $K_4$-free $3$-graphs; this can be thought of as the most natural weakening of Tur\'an's conjecture.

\begin{theorem}\label{k4-nok4}
\[\pi_{K_4^-}(K_4)=16/27\]
\end{theorem}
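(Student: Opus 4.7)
The proof will naturally split into a lower bound (giving a $K_4$-free construction with induced $K_4^-$-density $16/27$) and a matching upper bound (obtained via Razborov's semi-definite method).

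For the lower bound, I would take Turán's classical construction: the balanced blow-up $T(n)$ of $([3], \{112,223,331,123\})$ on $3n$ vertices, with parts $A_1, A_2, A_3$. Edges are all triples of one of four types: $iij$ with $j \equiv i+1 \pmod 3$, or $ijk$ with one vertex in each part. This 3-graph is well known to be $K_4$-free. To compute the $K_4^-$-density, I would classify 4-subsets by their distribution across the three parts and count how many triples in each are edges. Distributions $(4,0,0)$ and $(2,2,0)$ contribute no copies of $K_4^-$; distributions of type $(3,1,0)$ give a copy precisely when the pair $(i,j)$ (with $i$ the majority part) obeys the cyclic orientation $j\equiv i+1$; and all $(2,1,1)$ distributions induce $K_4^-$, regardless of orientation, since they always yield three edges (one of the two $iij$/$iik$ triples, plus both transversal $ijk$ triples). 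Summing these up gives $n^2(n-1)(2n-1)$ copies of $K_4^-$ out of $\binom{3n}{4}$ 4-sets, which tends to $16/27$ as $n\to\infty$.

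For the upper bound, I would invoke Razborov's semi-definite method as packaged in Flagmatic. Concretely, one works with $3$-graphs on (say) $7$ vertices as the flag basis, expresses induced $K_4^-$-density as a linear functional on the flag-algebra cone, and searches for a certificate: positive semi-definite matrices $Q_\sigma$ indexed by types $\sigma$, together with a slack polynomial supported on $K_4$-containing configurations, such that $16/27$ minus the $K_4^-$-density can be written as a sum of squares modulo that slack. The existence of such a certificate, together with the fact that it can be rationalised and checked, is what Flagmatic provides; the certificate here is the file \verb|k4max43.js| referenced in the paper, and \verb|inspect_certificate.py| verifies its correctness. The hard part is not the structure of the argument—the lower bound counting is routine, and the semi-definite framework is standard—but rather the computational discovery of a certificate that is tight at Turán's construction; the fact that a certificate exists at level $7$ is what makes this specific weakening of Turán's conjecture tractable, and one would expect tightness to be reflected in the flag-algebra problem by the matrices $Q_\sigma$ vanishing on the subgraph-density profile of $T(n)$.

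Finally, I would note (for completeness, though not needed for the equality) that the construction is not unique up to isomorphism as an extremal object in general: any $K_4$-free 3-graph on which the flag-algebra certificate is tight will work, mirroring the well-known non-uniqueness phenomenon for $\pi(K_4)$. A genuine stability/uniqueness statement for $\pi_{K_4^-}(K_4)$ would require separate arguments, deferred in this paper to the forthcoming \cite{FRV12}.
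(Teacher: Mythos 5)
Your proposal is correct and follows exactly the same route as the paper: the lower bound comes from Tur\'an's construction (your density calculation, giving $n^2(n-1)(2n-1)$ copies of $K_4^-$ and hence density $\to 16/27$, checks out), and the upper bound is the Flagmatic certificate \verb|k4max43.js|. One small correction to your closing remark: the paper actually expects the opposite of non-uniqueness here---it states that, by mimicking Pikhurko's argument for Razborov's related result, Tur\'an's construction can be shown to be the stable extremum for $\pi_{K_4^-}(K_4)$, in contrast with the notorious non-uniqueness for $\pi(K_4)$ itself; the deferral to \cite{FRV12} is for the stability proof, not because uniqueness is in doubt.
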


\begin{proof}
The upper bound is from a flag algebra calculation using Flagmatic (see Section~\ref{flagmaticsec} for how to obtain a certificate). The lower bound is from Tur\'an's construction, a balanced blow-up of $([3], \{112,223,331, 123\})$.
\end{proof}

In addition, by essentially mimicking Pikhurko's argument, it is possible to show that any $K_4$-free $3$-graph with $K_4^-$-density `close' to $16/27$ is `close' to Tur\'an's construction in the \emph{edit distance}. That is, one can make it into a copy of Tur\'an's construction by changing `few' edges. We address this, and the more general issue of obtaining stability from proofs via the semi-definite method, in a forthcoming note~\cite{FRV12}.

Having established that $\pi_{K_4^-}(K_4)=16/27$, can we say anything about $\pi_{K_5^-}(K_5)$? In Section~2.4 we give a result, Theorem~\ref{5.9}, that implies $\pi_{K_5^-}(K_5)=5/8$, with the lower bound coming from a complete balanced bipartite $3$-graph. More generally, we believe we know what the value of $\pi_{K_t^-}(K_t)$ should be.

Define a sequence $(H_t)_{t\geq 2}$ of degenerate $3$-graphs on $t$ vertices as follows. Let 
\[H_2=\left([2], \{111, 222, 112, 221\}\right),\]
and 
\[H_3=\left([3], \{111, 222, 333, 112, 223, 331\}\right).\]  
Now for $t\geq 4$, define $H_t$ by adding vertices $t-1$ and $t$ to $H_{t-2}$, together with the edges
\[ (t-1)(t-1)(t-1), \ ttt, \ (t-1)(t-1)t, \ (t-1)tt. \]
Then let $G_t(n)$ denote the complement of a balanced blow-up of $H_{t-1}$ on $n$ vertices. This construction is due to Keevash and Mubayi, and is well-known (see for example Keevash~\cite{Keevash survey}) to be $K_{t}$-free.

\begin{conjecture}\label{superconjecture}
$G_t(n)$ is the unique  (up to isomorphism) $3$-graph with $\ex(n, K_t)$ edges and $\ex_{K_t^-}(n, K_t)$ induced copies of $K_t^-$. 
\end{conjecture}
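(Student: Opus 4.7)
The plan is to attempt a proof by induction on $t$, leveraging the recursive structure built into both $H_t$ and $G_t(n)$, combined with Razborov's semi-definite method and a stability analysis. First, I would use Flagmatic to establish a matching upper bound $\pi_{K_t^-}(K_t) \le \rho_t$, where $\rho_t$ denotes the $K_t^-$-density of $G_t(n)$. This mirrors the approach used for $t=4$ in Theorem~\ref{k4-nok4} and the $t=5$ case obtained via Theorem~\ref{5.9}; the lower bound is immediate from the construction itself.

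Second, I would derive a stability statement along the lines of the authors' forthcoming paper~\cite{FRV12}: any $K_t$-free $3$-graph $G$ on $n$ vertices with $(\rho_t - o(1))\binom{n}{t}$ induced copies of $K_t^-$ must lie within edit distance $o(n^3)$ from a copy of $G_t(n)$. The positive semi-definite matrices output by Flagmatic record the slack in the extremal inequality, and inspecting where this slack vanishes should pin down the near-extremal flag profiles, which in turn constrain the global structure of $G$.

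Third, to pass from asymptotic closeness to exact uniqueness, I would combine this stability with a Pikhurko-style cleaning argument in the spirit of~\cite{P11a}: starting from a graph edit-close to $G_t(n)$, local vertex-move and edge-swap inequalities---sharpened by the simultaneous requirement that $G$ also maximise the number of edges---should force every deviation from $G_t(n)$ to strictly decrease at least one of the two counts. The recursive construction of $H_t$ from $H_{t-2}$ by adding two new vertices with edges $(t-1)(t-1)(t-1)$, $ttt$, $(t-1)(t-1)t$, $(t-1)tt$ suggests that inductively one can peel off these two "extremal" vertex classes and reduce to the conjecture at level $t-2$.

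The main obstacle is twofold. Computationally, flag algebra calculations blow up quickly with $t$, so even producing the necessary upper bound at moderate $t$ may be infeasible without an inductive reduction--and it is not clear that the semi-definite certificates admit such a reduction in a clean way. More seriously, the conjecture implicitly asserts that $G_t(n)$ attains $\ex(n,K_t)$ edges, which is Tur\'an's conjecture for $t=4$ (open for over sixty years) and widely open for $t\ge 5$. An unconditional proof therefore seems out of reach; the realistic target is the \emph{conditional} uniqueness statement---\emph{if} a $K_t$-free $3$-graph simultaneously achieves both extremal values, then it is isomorphic to $G_t(n)$---which avoids resolving Tur\'an's problem and should be within reach for small $t$ via the strategy above.
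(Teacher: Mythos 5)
This statement is labelled a \emph{conjecture} in the paper, and the authors offer no proof of it; the only supporting evidence they give is Theorem~\ref{k4-nok4} ($t=4$, modulo the part about edges), Theorem~\ref{5.9} (which gives $\pi_{K_5^-}(K_5)=5/8$), and a density calculation for $G_t(n)$. So there is no ``paper proof'' to compare your proposal against.

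As a research plan your proposal is sensible, and you are right to flag its central obstruction: Conjecture~\ref{superconjecture} asserts, among other things, that $G_t(n)$ attains $\ex(n,K_t)$ edges, which for $t=4$ is essentially Conjecture~\ref{turanconj} (Tur\'an's $5/9$ conjecture) and is equally open for $t\ge 5$. Because the hypergraph Tur\'an problem for $K_t$ is wide open and because, for $t=4$ at least, there are many non-isomorphic $K_4$-free constructions of the same edge-density~\cite{B83, FDF88, F08, K82}, no amount of flag-algebra computation for the $K_t^-$-count alone can deliver the uniqueness claimed: uniqueness requires ruling out all those rival constructions, and the ``and $\ex(n,K_t)$ edges'' clause cannot be certified without resolving Tur\'an's problem itself. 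Your three-step outline (Flagmatic upper bound on $\pi_{K_t^-}(K_t)$, stability via the certificate, Pikhurko-style cleaning) is exactly the template the authors used for the known cases $t=4,5$ and for Razborov's ``no induced $4.1$'' variant, and your observation that the realistic target is the \emph{conditional} uniqueness statement is the right scope reduction. But even that conditional statement is unproved in the paper, and your proposal does not close the gap: the inductive reduction from $H_t$ to $H_{t-2}$ is only a heuristic suggestion, and it is not established that the flag-algebra certificates propagate along it. In short, your write-up is an honest and reasonable strategy, not a proof, and it should be read as such; you correctly identify why an unconditional proof is out of reach, which matches the paper's decision to leave this as a conjecture.
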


It is easy to work out that $G_t(n)$ has edge-density
\[1-\frac4{(t-1)^2}+o(1),\]
and a slightly more involved calculation shows that its $K_t^-$-density is
\[\dfrac{t!}{(t-1)^{t-1}} \dfrac{ 2^{(t-1)/2}}{3}+o(1) \]
if $t$ is odd, and
\[ \dfrac{t!}{(t-1)^t} \dfrac{(5t-8)}{3} 2^{(t-6)/2}+o(1) \]
if $t$ is even.
Note that for $t=4$ and $t=5$ this agrees with Theorems~\ref{k4-nok4} and~\ref{5.9} respectively.

	\subsection{Forbidding $C_5$}

Mubayi and R\"odl~\cite{MR02} studied the Tur\'an density problem for $C_5$, and came up with the following ingenious construction. Partition the vertex set into two parts $A$ and $B$ with $\size{A} \approx \sqrt{3} \size{B}$, and add all edges that have two vertices in $A$ and one vertex in $B$, and then iterate inside $B$. This can be described succinctly as an unbalanced blow-up of the (degenerate) $3$-graph $([2], \{112\})$, iterated inside part $2$. We leave it as an exercise for the reader to verify that this is indeed $C_5$-free. Mubayi and R\"odl conjectured that this construction is best possible, and recent applications of the semi-definite method~\cite{FRV11, R10} have provided strong evidence in that direction.

\begin{conjecture}[Mubayi, R\"odl~\cite{MR02}]\label{c5conj}
\[\pi(C_5)= 2\sqrt{3}-3.\]
\end{conjecture}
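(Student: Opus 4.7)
The lower bound is by direct computation on the Mubayi--R\"odl construction. Partition $V = A \sqcup B$ with $\beta = |B|/n$, take every triple with two vertices in $A$ and one in $B$ as an edge, and iterate the same construction inside $B$; $C_5$-freeness is routine to check, as already noted by Mubayi and R\"odl~\cite{MR02}. The asymptotic edge density satisfies the fixed-point relation $d = 3(1-\beta)^2 \beta + \beta^3 d$, giving $d(\beta) = 3\beta(1-\beta)/(1 + \beta + \beta^2)$. Maximising over $\beta \in (0,1)$ yields the critical point $\beta = (\sqrt{3}-1)/2$ (i.e.\ $|A|/|B| = \sqrt{3}$) and $d = 2\sqrt{3} - 3$.

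The upper bound is the hard direction. The natural attack is Razborov's semi-definite method via Flagmatic, following the template of~\cite{R10, FRV11}. One runs the SDP with types of order $N$, obtaining a decreasing sequence of upper bounds $u_N$ converging (numerically) to $2\sqrt{3}-3$ from above, but strictly greater for every $N$ reached. Closing the gap exactly faces two serious difficulties. First, the extremal construction is an iterated blow-up, so pseudo-optimal configurations exist at every scale and the SDP has a large family of zero-eigenvalue directions that must all be correctly identified. Second, the target value $2\sqrt{3}-3$ is irrational, which rules out the Pikhurko-style trick of rounding the floating-point SDP certificate to a rational one with the same optimum. Overcoming this second obstacle is, I expect, the principal barrier and probably requires a qualitatively new idea, perhaps a change of variables that turns the problem into rational optimisation, or a self-similar argument that exploits the iterated structure of the extremal example directly.

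A complementary strategy is suggested by the paper's own theorem that $\pi_{\vec{S}_3}(\emptyset) = 2\sqrt{3} - 3$, with the very same iterated blow-up of $([2], \{112\})$ as extremal configuration. The plan would be to associate to every $C_5$-free 3-graph $G$ on $n$ vertices an auxiliary digraph $D(G)$ on $V(G)$ with the property that each edge $\{x,y,z\} \in E(G)$ induces an $\vec{S}_3$ in $D(G)$ (centred at some canonically chosen vertex of the edge), while no 3-set in $\binom{V(G)}{3} \setminus E(G)$ induces an $\vec{S}_3$; this would immediately yield $\pi(C_5) \leq \pi_{\vec{S}_3}(\emptyset) = 2\sqrt{3} - 3$. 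The crux is identifying the orientation rule: for each edge, $C_5$-freeness should pick out a canonical ``centre'' vertex, and the choices must be globally consistent across overlapping edges (so that no unwanted $\vec{S}_3$ appears in $D(G)$ on a non-edge 3-set). Naive attempts based on joint neighbourhoods $\Gamma_{xy}$ fail this consistency test, and finding the right combinatorial invariant that does work is where I expect the hard part to lie.
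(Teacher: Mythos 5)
This is Conjecture~\ref{c5conj}, not a theorem: the paper explicitly presents $\pi(C_5)=2\sqrt3-3$ as an open problem of Mubayi and R\"odl, and offers no proof, only the lower-bound construction and the flag-algebra numerical evidence from~\cite{FRV11,R10}. Your lower-bound calculation is correct (the fixed-point relation, the critical $\beta=(\sqrt3-1)/2$, and the value $2\sqrt3-3$ all check out), but there is no upper bound to compare you against, because neither you nor the paper has one. What you have written is a research plan, and you should label it as such rather than as a proof.

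Your second strategy, via $\vec{S}_3$-densities of digraphs, is in fact close in spirit to the paper's own suggestion, but the precise statement you want is too strong and the logic is slightly misaligned. You ask for a map $G\mapsto D(G)$ so that \emph{every} $C_5$-free $3$-graph $G$ equals $G(D(G))$ exactly; but Proposition~\ref{dto3} shows $G(D)$ is always $K_4$-free and $C_7$-free as well, so any $C_5$-free $G$ that contains a $K_4$ or a $C_7$ cannot be of this form. (Also, the condition ``no non-edge triple induces $\vec{S}_3$'' is unnecessary for the inequality you want: $E(G)\subseteq E(G(D))$ already gives $e(G)\le\ex_{\vec{S}_3}(n,\emptyset)$.) The paper's Question~\ref{3gtodq} gets this right by asking only for an \emph{approximate} converse on near-extremal $C_5$-free $3$-graphs, measured in edit distance, and notes that an affirmative answer would combine with Theorem~\ref{1213} to give the conjecture. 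Reformulating your strategy in that approximate form, and then confronting the genuine difficulty of establishing such a removal-type statement, is where the work lies. Your diagnosis of the SDP obstruction (iterated blow-up extremum with an irrational optimal density, blocking exact rounding of a floating-point certificate) is accurate and matches the heuristic discussion the authors point to in~\cite{FRV11}.
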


Observe now that Mubayi and R\"odl's construction avoids $K_4$ as well as $C_5$. If their conjecture is true, then $\pi(C_5)=\pi(C_5, K_4)$. We would thus expect their construction to also maximise the number of copies of $K_4^-$ in a $C_5$-free $3$-graph. This appears to be the case, with a minor caveat: the construction is the right one, but the weights we place on each part need to be adjusted slightly.

\begin{proposition}\label{k4-noc5}
\[0.423570 < \alpha \leq \pi_{K_4^-}(C_5) < 0.423592,\]
where $\alpha$ is the maximum value of 
\[f(x)=\frac{4x(1-x)^3}{1-x^4},\]
in the interval $[0,1]$, which, by solving a cubic equation, can be computed explicitly to be
\[\alpha = 4-6\left((\sqrt{2}+1)^{1/3}-(\sqrt{2}-1)^{1/3}\right).\]
\end{proposition}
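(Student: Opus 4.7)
The plan is to prove the lower bound by analysing an iterated Mubayi--R\"odl construction, and to obtain the upper bound from a Flagmatic certificate. First, for the lower bound, fix $x \in (0,1)$ and consider the following weighted construction $G(x)$: partition the vertex set $V$ into two parts $A$ and $B$ with $|B|/|V| = x$; put in all triples with two vertices in $A$ and one in $B$; and iterate the construction recursively inside $B$. A straightforward induction on the number of levels shows that $G(x)$ is $C_5$-free: any would-be copy of $C_5$ either has all five vertices below the top level (handled by induction) or contains at least one vertex of $A$, at which point the ``two $A$, one $B$'' structure of the edges at the top level forces incompatible parities when one tries to travel around the cycle.

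Next I would compute the asymptotic induced $K_4^-$-density $\pi(x)$ of $G(x)$ by conditioning on how four uniformly random vertices split between $A$ and $B$. The only splits that can contribute are $(3,1)$ and $(0,4)$: a split of type $(3,1)$ always induces a $K_4^-$, since the three edges all contain the single $B$-vertex and the $A$-triple is a non-edge; splits of type $(2,2)$ and $(1,3)$ give at most two edges and so cannot produce a $K_4^-$; and the $(0,4)$-split contributes recursively. This yields
\[
\pi(x) \;=\; 4x(1-x)^3 + x^4\,\pi(x),
\]
hence $\pi(x) = f(x) = 4x(1-x)^3/(1-x^4)$. Optimising over $x \in (0,1)$ by setting $(\log f)'(x)=0$ and clearing denominators yields, after routine simplification, the cubic
\[
3x^3 + 3x^2 + 3x - 1 = 0,
\]
which has a unique real root $x^* \in (0,1)$. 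Applying Cardano's formula to this depressed-style cubic expresses $x^*$ in terms of $(\sqrt{2}\pm 1)^{1/3}$, and substituting back into $f$ and simplifying produces the stated closed form $\alpha = 4 - 6\bigl((\sqrt{2}+1)^{1/3}-(\sqrt{2}-1)^{1/3}\bigr)$. A numerical evaluation then confirms $\alpha > 0.423570$.

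For the upper bound $\pi_{K_4^-}(C_5) < 0.423592$, I would invoke the semi-definite method as packaged in Flagmatic, enumerating $C_5$-free $3$-graphs on (say) seven vertices, setting up the corresponding SDP to bound the induced $K_4^-$-density from above, and producing a certificate (file \verb|c5max43.js|) whose correctness can be independently verified by the \verb|inspect_certificate.py| script described in Section~\ref{flagmaticsec}. The main obstacle is arithmetic rather than conceptual: on the lower-bound side the bookkeeping for Cardano's formula needs to be done carefully so that the answer lands in the clean radical form stated; on the upper-bound side, the usual difficulty is choosing a large enough flag order and a sufficiently rich set of types for the SDP to close the narrow gap between $\alpha$ and the desired bound $0.423592$, which is delicate because the two numbers differ only in the fifth decimal place.
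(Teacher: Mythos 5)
Your proposal matches the paper's proof: the lower bound comes from the same iterated blow-up of $([2],\{112\})$ with proportion $x$ assigned to the iterated part, the recursion $\pi(x)=4x(1-x)^3+x^4\pi(x)$ gives $f(x)$, optimisation gives the cubic $3x^3+3x^2+3x-1=0$, and the upper bound is the Flagmatic certificate \verb|c5max43.js|. One small algebraic caveat: Cardano applied to $3x^3+3x^2+3x-1$ gives $x^*=\bigl(\sqrt[3]{8+6\sqrt{2}}+\sqrt[3]{8-6\sqrt{2}}-1\bigr)/3$, which does \emph{not} literally simplify to an expression in $(\sqrt{2}\pm 1)^{1/3}$; the cleaner route is to first use $x(1+x+x^2)=1/3$ to reduce $f(x^*)$ to $1-9(x^*)^2$, derive the cubic $\alpha^3-12\alpha^2+156\alpha-64=0$ satisfied by $\alpha=f(x^*)$, and then apply Cardano (after the shift $\alpha=\beta+4$, which depresses it to $\beta^3+108\beta+432=0$) to obtain $\alpha=4-6\bigl((\sqrt{2}+1)^{1/3}-(\sqrt{2}-1)^{1/3}\bigr)$ directly.
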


\begin{proof} The upper bound is from a flag algebra calculation using Flagmatic (see Section~\ref{flagmaticsec} for how to obtain a certificate). The lower bound is from a blow-up of $([2], \{112\})$, with proportion $x$ of the vertices placed inside part $2$, iterated inside part $2$. The function $f(x)$ then calculates exactly the asymptotic density of $K_4^-$ in such a construction. The sign of the derivative of $f$ is determined by the product of a cubic and a linear factor. Performing the required calculus, the maximum of $f$ can then be determined in closed form.
\end{proof}

Note that the maximum of $f$ occurs at a cubic irrational, and not at a quadratic irrational as happens when we maximise the number of $3$-edges. What is more, we place proportion approximately $0.366025$ (i.e{.} a little more than $1/3$) of the vertices inside part $B$ when maximising the edge-density; and this drops down to approximately $0.253077$ (i.e{.} a little more than $1/4$) when maximising the $K_4^-$ density. This is to be expected; in the first case we want an average $3$-set to have about one vertex in part $B$, while in the latter case we want an average $4$-set to have about one vertex in part $B$.

We conjecture that the lower bound in Proposition~\ref{k4-noc5} is tight:

\begin{conjecture}\label{k4-noc5conj}
\[\pi_{K_4^-}(C_5) = 4-6\left((\sqrt{2}+1)^{1/3}-(\sqrt{2}-1)^{1/3}\right).\]
\end{conjecture}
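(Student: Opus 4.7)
The plan is to close the gap between the Flagmatic upper bound $0.423592$ from Proposition~\ref{k4-noc5} and the construction value $\alpha \approx 0.423570$. The principal obstruction is that $\alpha$ is a cubic irrational, whereas a bare semi-definite flag-algebra calculation of the kind used in Theorem~\ref{k4-nok4} typically yields bounds that are quadratic irrationals or rationals. So a direct application of Razborov's method is unlikely to suffice, and one must combine it with additional structural analysis of the conjectured iterated extremal configuration.

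First I would aim for a stability-type result: show that any $C_5$-free $3$-graph $G$ on $n$ vertices with $K_4^-$-density at least $\alpha - \varepsilon$ is $o_\varepsilon(n^3)$-close in edit distance to an unbalanced blow-up of $([2],\{112\})$ in which the vertex labelled $2$ is ``decorated'' by a smaller $C_5$-free $3$-graph with the same property. Such stability can often be read off from the flag-algebra certificate by identifying which flags are sharp, and then exploiting a perturbation/differential argument in the spirit of Pikhurko's analysis of Razborov's $K_4$-bound. The ratio between the two parts should be forced by the location of the maximum of $f(x)=4x(1-x)^3/(1-x^4)$, i.e., by the cubic equation giving $\alpha$.

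Given stability, I would then proceed by induction on $n$. Writing $V(G)=A\sqcup B$ according to the stability partition, one expects to show that (i) almost every triple with two vertices in $A$ and one in $B$ is an edge; (ii) $A$ is essentially edge-free (a missing edge in $A$ together with a vertex in $B$ creates few $K_4^-$s and risks producing a $C_5$); and (iii) the induced subgraph $G[B]$ is again $C_5$-free, with its own contribution to the $K_4^-$ count behaving like the iterated construction one level down. The strict concavity of $f$ near its maximum would ensure that any deviation of $|A|/|B|$ from the optimum value costs a positive amount in the $K_4^-$-density, which, combined with the inductive estimate on $G[B]$, should close up the upper bound to $\alpha$.

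The main obstacle I foresee is the stability step itself. For non-iterated extremal constructions (balanced blow-ups of fixed finite graphs), flag-algebra-derived stability is by now well understood; here, however, the extremal configuration is \emph{not} a blow-up of any fixed finite template, and so the ``zero-eigenvalue'' directions of the SDP do not point to a single finite extremal graph. One probably needs a joint stability argument valid simultaneously for edge-density and $K_4^-$-density, which is intimately tied to resolving Conjecture~\ref{c5conj} of Mubayi and R\"odl. Barring a breakthrough in the analysis of iterated blow-up extremal problems, a promising alternative would be to construct a \emph{family} of flag-algebra certificates parametrised by the split ratio $x$, whose envelope over $x\in[0,1]$ matches $f$ exactly; verifying such an envelope seems to be the crux of any eventual proof.
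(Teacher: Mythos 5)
There is a mismatch of kind here: the statement you were asked about is labelled a \emph{conjecture} in the paper, and the paper does not prove it. What the authors actually establish is the two-sided bound of Proposition~\ref{k4-noc5}, $0.423570<\alpha\leq\pi_{K_4^-}(C_5)<0.423592$, and then record the conjecture that $\alpha$ is the truth. Your submission is a roadmap rather than a proof, and every load-bearing step in it is left open: the stability statement for the iterated blow-up, the perturbation/induction step, and the claim that the concavity of $f$ near its maximum forces the split ratio. None of these is carried out, and each is at least as hard as the conjecture itself. So, while the plan is sensible as a research programme, it does not prove the statement, and no proof is known.

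Two further remarks. First, your stated heuristic --- that a ``bare'' flag-algebra computation typically only certifies rational or quadratic-irrational bounds, so a cubic target is out of reach --- is contradicted within this very paper: Theorem~\ref{1213} gives the exact irrational value $2\sqrt3-3$, and Theorem~\ref{121314} gives the exact value $\tfrac{4p(1-p)^3}{1-p^4}$ with $p$ the real root of $3t^3+3t^2+3t-1$, which is precisely the cubic irrational appearing in Conjecture~\ref{k4-noc5conj}, and both come straight from Flagmatic. The algebraic type of the optimum is not the obstruction; the authors themselves flag it as mysterious why Flagmatic closes the gap in the directed setting but not for the $3$-graph $C_5$. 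Second, you do not mention the directed-graph reformulation of Section~3, which seems to be the most promising concrete route the paper offers: $G(D)$ is $(C_5,K_4)$-free, copies of $K_4^-$ in $G(D)$ correspond exactly to copies of $\vec S_4$ in $D$, and $\pi_{\vec S_4}(\emptyset)$ is already proved to equal the conjectured value. What is missing is a converse in the spirit of Question~\ref{3gtodq}, namely a removal-lemma-type statement that near-extremal $C_5$-free $3$-graphs are close to some $G(D)$; absent that, the upper bound remains open, and your proposal does not supply it either.
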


Given the difference in the proportion of vertices assigned to part $2$ between the case where we are maximising the number of edges and the case where we are maximising the number of copies of $K_4^-$ in a $C_5$-free $3$-graph, one could expect that the way to maximise the number of copies of $4.2$---that is, of $4$-sets spanning exactly $2$ edges---would also be to take a blow-up of $([2], \{112\})$, iterated inside part $2$, with a suitable proportion of vertices (say a little over $1/2$) being assigned to part $2$ at each stage of the iteration. This yields an asymptotic density of only 
\[\max_{x \in [0,1]} \frac{6x^2(1-x)^2}{1-x^4},\]
which is approximately 0.404653. However, it turns out we can do much better using a different construction:

\begin{proposition}\label{4.2noc5}
\[ 0.571428 < 4/7 \leq \pi_{4.2}(C_5) < 0.583852. \]
\end{proposition}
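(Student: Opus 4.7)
My plan is to prove the two bounds separately. The upper bound $\pi_{4.2}(C_5) < 0.583852$ will follow from a standard semi-definite method computation executed through Flagmatic as explained in our earlier paper~\cite{FRV11}; the relevant certificate is \verb|c5max42.js|, and no new ideas are required beyond specifying $C_5$ as the forbidden subgraph and $4.2$ as the target induced subgraph.

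For the lower bound, my plan is to exhibit an explicit $C_5$-free $3$-graph achieving induced $4.2$-density $4/7$ asymptotically. Motivated by the fact that the balanced blow-up of $K_4$ already achieves $9/16$ (cf.\ the $\pi_{4.2}(C_5, F_{3,2}) = 9/16$ entry in the table), I would look to improve by iterating: let $G^{(0)}$ be a single vertex, and define $G^{(n)}$ recursively by taking four disjoint copies of $G^{(n-1)}$ and adding every triple that hits exactly three of the four copies. The resulting $3$-graph has $4^n$ vertices, organised as the leaves of a depth-$n$ tree with branching factor $4$.

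I would first verify that $G^{(n)}$ is $C_5$-free. Suppose, for contradiction, that vertices $v_1, \ldots, v_5$ span a $C_5$, and let $\ell$ be the deepest level at which all five lie in a common sub-copy. Their labels $b_1, \ldots, b_5 \in \{1,2,3,4\}$ at level $\ell+1$ are not all equal. A cyclic triple $\{v_i, v_{i+1}, v_{i+2}\}$ is an edge iff either (a) $b_i, b_{i+1}, b_{i+2}$ take three distinct values, or (b) $b_i = b_{i+1} = b_{i+2}$ and the triple is recursively an edge at a deeper level. The crucial observation is that case~(b) cannot actually occur: if $b_i = b_{i+1} = b_{i+2} = c$, then the adjacent triple $\{v_{i-1}, v_i, v_{i+1}\}$ has labels $(b_{i-1}, c, c)$, which forms an edge only if $b_{i-1} = c$; iterating around the cycle forces all $b_j = c$, contradicting non-uniformity. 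So every cyclic triple falls in case~(a), which forces $b_i, b_{i+1}, b_{i+2}$ distinct for every $i$. Since in a $5$-cycle every pair of vertices lies at cyclic distance at most two, all five $b_i$ must then be distinct --- impossible in the four-element set $\{1,2,3,4\}$.

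For the density, let $f$ denote the limiting induced $4.2$-density in $G^{(n)}$ as $n \to \infty$. A random $4$-set's partition pattern $(x_1, x_2, x_3, x_4)$ among the four top-level copies is multinomial with parameters $(4; 1/4, 1/4, 1/4, 1/4)$. Only two pattern classes contribute to the count: pattern $(2,1,1,0)$, in any of its $12$ orderings (total probability $9/16$), always induces exactly two `rainbow' edges independent of the recursion; and pattern $(4,0,0,0)$, in any of its $4$ orderings (total probability $1/64$), lies entirely inside one sub-copy and so inherits the $4.2$-density $f$ recursively. The remaining patterns $(3,1,0,0)$, $(2,2,0,0)$ and $(1,1,1,1)$ always yield $0$, $1$, or $4$ edges regardless of the substructure, never exactly $2$. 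This gives the fixed-point equation $f = 9/16 + f/64$, with unique solution $f = 4/7$. The main obstacle in this proof is the $C_5$-freeness verification; once the right construction has been guessed, the density calculation is essentially mechanical.
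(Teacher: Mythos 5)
Your proposal is correct and matches the paper's approach exactly: the upper bound is the Flagmatic certificate \verb|c5max42.js|, and the lower bound comes from the balanced blow-up of $K_4$ iterated inside each part, which is precisely your recursive $G^{(n)}$. The paper leaves the $C_5$-freeness and the density computation as implicit; your verification of both is correct (the ``adjacent triple forces $b_{i-1}=c$'' argument rules out case~(b) and yields the required contradiction in $\{1,2,3,4\}$, and the fixed-point equation $f = 9/16 + f/64$ does give $f=4/7$).
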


\begin{proof}
The upper bound is from a flag algebra calculation using Flagmatic (see Section~\ref{flagmaticsec} for how to obtain a certificate). For the lower bound, consider a balanced blow-up of $K_4$, iterated inside each part.
\end{proof}

We believe that the lower bound in Proposition~\ref{4.2noc5} is tight:

\begin{conjecture}\label{4.2noc5conj}
\[\pi_{4.2}(C_5)=4/7.\]
\end{conjecture}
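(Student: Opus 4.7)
The plan is to match the $4/7$ lower bound of Proposition~\ref{4.2noc5} by proving $\pi_{4.2}(C_5) \leq 4/7$. First I would verify the lower bound directly: in the balanced 4-part blow-up of $K_4$ iterated inside each part, the five possible part-profiles $(4,0,0,0), (3,1,0,0), (2,2,0,0), (2,1,1,0), (1,1,1,1)$ of a random 4-set occur with asymptotic probabilities $\tfrac{4}{256}, \tfrac{48}{256}, \tfrac{36}{256}, \tfrac{144}{256}, \tfrac{24}{256}$ respectively. Of these, only the profile $(2,1,1,0)$ produces a 4-set spanning exactly two 3-edges at the top level, while the profile $(4,0,0,0)$ recursively contributes a $p$-fraction of $4.2$'s via the iteration (the other profiles contribute $0$ or $4$ edges). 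This yields the self-consistent equation $p = \tfrac{1}{64}p + \tfrac{36}{64}$ with unique solution $p = 4/7$, and a short case analysis on 5-vertex part-profiles confirms that the construction is $C_5$-free.

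For the upper bound, my first attempt would be a more aggressive flag-algebra calculation. The gap of approximately $0.012$ separating the bound $0.583852$ of Proposition~\ref{4.2noc5} from $4/7$ is modest, so increasing the flag order or expanding the type pool has a genuine chance of closing it. A standard further refinement is to impose `sharp' constraints that fix the asymptotic densities of the small subgraphs appearing in the iterated $K_4$-blow-up to their construction values; such constraints have been effective in tightening flag-algebra bounds whenever the extremal configuration is suitably rigid.

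If the direct SDP approach proves insufficient, I would pursue a self-reducing strategy modelled on the treatment of $\pi_{\vec{S}_3}(\emptyset)$ in Section~3. The aim is to use the semi-definite method to certify that in any $C_5$-free 3-graph $G$ one can find a 4-part decomposition $V(G) = V_1 \sqcup V_2 \sqcup V_3 \sqcup V_4$ whose crossing 4-sets contribute at most $\tfrac{36}{64} + o(1)$ to the $4.2$-density, and then iterate the bound on each induced sub-3-graph $G[V_i]$ to recover the fixed-point inequality $\pi_{4.2}(C_5) \leq \tfrac{1}{64}\pi_{4.2}(C_5) + \tfrac{36}{64}$, which rearranges to $\pi_{4.2}(C_5) \leq 4/7$. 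The main obstacle lies precisely here: the self-reduction implicitly requires a stability companion, namely that any $C_5$-free 3-graph with $4.2$-density close to $4/7$ lies $o(n^3)$-close in edit distance to a balanced $K_4$-blow-up. Establishing such a stability result is the substantive part of the argument, and I would expect it to rest on the tools for extracting stability from semi-definite proofs developed in~\cite{FRV12}, combined with a careful study of the Flagmatic dual certificate to pinpoint the target 4-part block structure.
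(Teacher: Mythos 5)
The statement you were asked about is a \emph{conjecture} in the paper, not a theorem: the authors establish only the bounds $4/7 \leq \pi_{4.2}(C_5) < 0.583852$ (Proposition~\ref{4.2noc5}) and explicitly leave the equality open. So there is no ``paper's own proof'' to compare against, and any complete argument you supply would be new.

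Your computation of the lower bound is correct and matches the paper's construction. The part-profile probabilities $\tfrac{4}{256}, \tfrac{48}{256}, \tfrac{36}{256}, \tfrac{144}{256}, \tfrac{24}{256}$ are right, the observation that only $(2,1,1,0)$ contributes a $4.2$ at the top level while $(4,0,0,0)$ recurses is correct (profiles $(3,1,0,0), (2,2,0,0), (1,1,1,1)$ yield $0$ or $1$, $0$, and $4$ edges respectively, never exactly $2$), and the fixed-point equation $p = \tfrac{1}{64}p + \tfrac{36}{64}$ does give $p = 4/7$.

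The upper bound is where your proposal has a genuine gap. Your first suggestion---a more aggressive SDP computation---is precisely what produced the bound $0.583852$ in Proposition~\ref{4.2noc5}, and the paper explicitly flags (Section 2.2 and the discussion after Theorem~\ref{4.2noc5f32}) that iterated blow-up extremal configurations are heuristically ill-suited to the semi-definite method; this is exactly why the authors state the exact result only for $\pi_{4.2}(C_5, F_{3,2}) = 9/16$, the non-iterated analogue, and leave the iterated case as a conjecture. Adding ``sharp'' density constraints from the construction does not by itself certify the bound, because the flag-algebra framework does not know the extremal configuration has the iterated structure. Your second suggestion, a self-reduction into four blocks plus an induction step, is attractive but as you note it presupposes a stability theorem (that every near-extremal $C_5$-free 3-graph is $o(n^3)$-close to an iterated $K_4$ blow-up); that stability claim is at least as hard as the conjecture itself and is not supplied by~\cite{FRV12} or anything in the paper. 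Finally, the analogy to $\pi_{\vec{S}_3}(\emptyset)$ is misleading: Theorem~\ref{1213} is proved by a \emph{direct} flag-algebra computation on the directed graph, not by a self-reduction argument, and the paper specifically highlights as an open mystery why the directed version is SDP-tractable while the corresponding $C_5$ problem is not. In short, your lower bound is sound, but the upper bound remains an open problem and your proposed routes to it do not yet constitute a proof.
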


While the upper bound we can obtain is still some way off $4/7$, the following exact result gives us rather more confidence about Conjecture~\ref{4.2noc5conj}:

\begin{theorem}\label{4.2noc5f32}
\[\pi_{4.2}(C_5, F_{3,2})=9/16.\]
\end{theorem}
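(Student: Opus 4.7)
The plan is to prove matching lower and upper bounds.

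\textbf{Lower bound.} I would take $G$ to be a balanced blow-up of $K_4$ into four parts $A_1,\dots,A_4$ of size $n/4$, so that a triple of $G$ is an edge iff its three vertices lie in three distinct $A_i$. First I need $G$ to be $\{C_5, F_{3,2}\}$-free. A direct inspection shows every pair of vertices in $C_5 = ([5], \{123, 234, 345, 451, 512\})$ lies in at least one edge; hence embedding $C_5$ into any blow-up of $K_4$ would require the five vertices to sit in five pairwise distinct parts, which is impossible. For $F_{3,2} = ([5], \{123, 124, 125, 345\})$: the edge $345$ forces $3, 4, 5$ into three distinct parts, while each edge $12i$ for $i \in \{3,4,5\}$ forces both $1$ and $2$ to lie in a different part from $i$ and from each other; with only four parts this compels $1$ and $2$ to share the unique remaining part, contradicting edge $123$. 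Next I count induced $4.2$-copies in $G$. Since the $3$-graph $4.2$ is the unique $3$-graph on $4$ vertices with exactly $2$ edges, classifying $4$-subsets by their unordered distribution across parts shows that types $(4,0,0,0)$, $(3,1,0,0)$, $(2,2,0,0)$ contribute $0$ edges each, type $(1,1,1,1)$ contributes $4$ edges, and only type $(2,1,1,0)$ contributes exactly $2$ edges (the two triples containing the singleton from each occupied non-doubled part). The number of $4$-sets of type $(2,1,1,0)$ is $4 \cdot \binom{3}{2} \cdot \binom{n/4}{2} \cdot (n/4)^2$, and dividing by $\binom{n}{4}$ gives limiting $4.2$-density $9/16$.

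\textbf{Upper bound.} For the matching bound $\pi_{4.2}(C_5, F_{3,2}) \leq 9/16$ I would appeal to Razborov's semi-definite method as implemented in Flagmatic, citing the associated certificate \verb|c5f32max42.js|. The positive semi-definite matrices and flag combinations provided there yield the inequality upon standard expansion as a sum of flag-algebra squares over a suitable common type, exactly as in the authors' previous use of the method.

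The main obstacle is the upper bound: I must set up the semi-definite program with a rich enough family of flags and types so that the optimum precisely matches the lower-bound value $9/16$, rather than a slightly larger value. The existence of the certificate (and the rigidity of the single-step $K_4$-blow-up, which is already known to be extremal for several related problems in the table of Section~1.4) confirms that a natural choice of flags suffices here. The remaining work in the lower bound --- the forbidden-subgraph checks and the combinatorial counting by part-distribution type --- is routine case analysis.
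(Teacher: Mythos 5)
Your proposal is correct and follows the same approach as the paper: a balanced blow-up of $K_4$ for the lower bound and the Flagmatic certificate \verb|c5f32max42.js| for the upper bound. The paper omits the forbidden-subgraph verification and the $(2,1,1,0)$-type density count, which you supply in full and correctly (giving $12 \cdot \binom{n/4}{2}(n/4)^2 / \binom{n}{4} \to 9/16$).
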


\begin{proof} The upper bound is from a flag algebra calculation using Flagmatic (see Section~\ref{flagmaticsec} for how to obtain a certificate). For a lower bound construction, take a balanced blow-up of $K_4$.
\end{proof}

In a sense Theorem~\ref{4.2noc5f32} tells us that if we do not allow ourselves to use iterated blow-up constructions, then a blow-up of $K_4$ is the best we can do.

This trick of forbidding $F_{3,2}$ when we think an iterated construction is best, but cannot close the gap using the semi-definite method, is often helpful, and we shall use it frequently in this paper. As discussed in detail in~\cite{FRV11}, there are heuristic reasons why one would not expect problems that admit iterated blow-up structures as extremal examples to be easily tackled using the semi-definite method; in many cases it is thus sensible to first study extremal problems in the context of $3$-graphs with independent neighbourhoods.

	\subsection{Forbidding $K_4^-$}

The $3$-graph on four vertices with three edges, $K_4^-$, is the smallest $3$-graph with non-trivial Tur\'an density, both in terms of the number of vertices and the number of edges. Disproving an earlier conjecture of Tur\'an, Frankl and F\"uredi~\cite{FF84} showed that $\pi(K_4^-)\geq 2/7$ by considering a balanced blow-up of $H_6$, iterated inside each of its $6$ parts. Using his semi-definite method, Razborov~\cite{R10} proved upper bounds for $\pi(K_4^-)$ quite close to this value (and small improvements were subsequently given in \cite{BT10} and \cite{FRV11}), leading to the natural conjecture that the construction of Frankl and F\"uredi is in fact best possible:

\begin{conjecture}[Frankl-F\"uredi, Razborov] \label{k4-conj}
\[\pi(K_4^-)=2/7.\]
\end{conjecture}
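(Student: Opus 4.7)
The target is the matching upper bound $\pi(K_4^-) \leq 2/7$, the lower bound being the Frankl-F\"uredi iterated blow-up of $H_6$. Since iterated blow-ups are precisely the kind of extremal configurations known to resist pure semi-definite techniques, I would follow the heuristic promoted throughout this paper and first attack the problem under the additional assumption of independent neighbourhoods. Thus the first step would be to prove $\pi(K_4^-, F_{3,2}) = 5/18$. One checks directly from the listed edges of $H_6$ that every pair of vertices has codegree exactly $2$, so balanced blow-ups of $H_6$ are $F_{3,2}$-free and achieve asymptotic edge density $10/36 = 5/18$; I would attempt the matching upper bound with Flagmatic on $7$- or $8$-vertex flags. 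Given the paper's broader experience---for example, Theorem~\ref{4.2noc5f32}, where adding $F_{3,2}$ closes a similar iterated-blow-up problem---there is strong circumstantial evidence that the corresponding SDP will close.

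The second step would be to extract a stability statement from the certificate, using the methodology of the forthcoming~\cite{FRV12}: any $K_4^-$-free, $F_{3,2}$-free $3$-graph of edge density within $\varepsilon$ of $5/18$ lies within $O(\varepsilon) n^3$ edit distance of a balanced blow-up of $H_6$. This would then feed into the third and decisive step, a peeling induction on $n$: let $G$ be a $K_4^-$-free $3$-graph attaining $\ex(n, K_4^-)$. If all codegrees satisfy $\size{\Gamma_{xy}} \leq (1/3 + o(1))n$---the maximum codegree in the balanced blow-up of $H_6$---then $G$ contains comparatively few copies of $F_{3,2}$, stability forces $G$ to be close to a blow-up of $H_6$, and one bounds the edges inside each part recursively to obtain $2/7$ in the limit. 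Otherwise some pair $\{x, y\}$ has codegree substantially above $n/3$, and one would exploit the structural constraint---immediate from $K_4^-$-freeness---that neither the link of $x$ nor the link of $y$ contains an edge inside $\Gamma_{xy}$, together with a careful case analysis, to derive either a contradiction or a structured substructure to peel off.

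The hard part is precisely this third step: analogous peeling/induction schemes have stubbornly resisted application to related iterated-extremal problems, most famously the Mubayi-R\"odl conjecture (Conjecture~\ref{c5conj}), whose structural similarity to the present one is palpable (both conjectured extremal configurations are iterated blow-ups of small $3$-graphs built around a $5$-cycle). Making the scheme work is likely to demand a substantially new ingredient---perhaps an entropy or differential-flag-algebra argument, a Zykov-style symmetrisation compatible with iteration, or a compactness argument via the limit theory of $3$-graphs. Even without such an ingredient, Steps~1 and~2 alone should yield, by averaging the density estimate over induced $F_{3,2}$-free subgraphs and solving the resulting recursion, a new unconditional upper bound on $\pi(K_4^-)$ improving on the best existing SDP bounds.
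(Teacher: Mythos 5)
This statement is labelled a \emph{conjecture} in the paper, and it remains open; the paper offers no proof of it, only the lower bound construction of Frankl and F\"uredi together with numerical evidence from the semi-definite method. Your submission is likewise not a proof but a programme sketch, and you acknowledge as much in your own final paragraph.

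On the merits of the sketch: Step~1 is already done in the literature---$\pi(K_4^-, F_{3,2}) = 5/18$ is Theorem~7 of~\cite{FRV11}, which the present paper cites in Section~2.3, so there is nothing new to establish there. Step~2 (a stability version) is plausible and in the spirit of~\cite{FRV12}, but it is not the bottleneck. The genuine gap is Step~3, the ``peeling induction,'' and you do not actually carry it out. You say that if all codegrees are $\leq (1/3 + o(1))n$ then $G$ contains ``comparatively few'' copies of $F_{3,2}$ and that stability then forces $G$ close to a blow-up of $H_6$; but stability for the $(K_4^-, F_{3,2})$ problem gives structure only under the hypothesis that $G$ is actually $F_{3,2}$-free and near the extremal density, not under the weaker hypothesis that $G$ has a small $F_{3,2}$-count and possibly a very different edge density (recall $2/7 > 5/18$, so the extremal $K_4^-$-free graph is \emph{denser} than anything $F_{3,2}$-free). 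There is no argument in the sketch showing why a small number of $F_{3,2}$-copies can be removed without destroying density, nor how the recursion would be set up on the nine parts of a near-$H_6$-blow-up so that the limit evaluates to $2/7$ rather than something larger, nor what happens in the high-codegree alternative. These are precisely the points where such arguments have historically broken down---as you correctly observe in your comparison with Conjecture~\ref{c5conj}---and without a concrete new ingredient the scheme does not constitute a proof. Your closing suggestion that Steps~1 and~2 alone should yield an improved unconditional upper bound via ``averaging over induced $F_{3,2}$-free subgraphs'' is also not substantiated: there is no obvious way to extract a dense $F_{3,2}$-free induced subgraph from an arbitrary $K_4^-$-free $3$-graph, and no recursion is written down.
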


Should the conjecture be true, one would expect that an iterated blow-up of $H_6$ also maximises the number of induced copies of $4.2$. As in the previous subsection, the semi-definite method is not quite able to close the gap; again we refer the reader to~\cite{FRV11} for a discussion of why iterated blow-up constructions might be `hard' for the method.

\begin{proposition}\label{4.2nok4-}
\[0.558139 < 24/43 \leq \pi_{4.2}(K_4^-) < 0.558378\]
\end{proposition}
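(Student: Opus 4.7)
The plan is to prove the two bounds separately, following the template of other results in the paper. For the upper bound $\pi_{4.2}(K_4^-) < 0.558378$, one runs Razborov's semi-definite method via Flagmatic on the relevant flag algebra; this produces the certificate \verb|k4-max42.js|, which records the positive semi-definite matrices witnessing the inequality, and no further manual argument is required. For the lower bound $24/43 \leq \pi_{4.2}(K_4^-)$, the natural candidate construction---in light of Conjecture~\ref{k4-conj}---is the Frankl--F\"uredi iterated blow-up of $H_6$: a balanced blow-up of $H_6$ on $n$ vertices, with the same construction applied recursively inside each of the six parts. This is $K_4^-$-free by an easy induction, once one checks that every $4$-subset of $V(H_6)$ spans at most $2$ of the $10$ edges of $H_6$.

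Let $d$ denote the limiting induced $4.2$-density of this construction. To compute $d$, I would condition on how a uniformly random $4$-subset is distributed amongst the six parts. The probability that all four vertices lie in the same part is $1/216$, and conditional on this event the density is again $d$ by self-similarity; so $d = B + (1/216)d$, where $B$ is the contribution from $4$-subsets spanning at least two parts. A short case analysis by partition shape gives $B$: shapes $3+1$ and $2+2$ contribute $0$, since at most one edge can be induced in either case; shape $2+1+1$ with parts $(A;B,C)$ induces exactly $2$ edges precisely when $\{A,B,C\}$ is an edge of $H_6$, which holds for exactly half of the admissible triples of parts, contributing $(5/9)(1/2)=5/18$; and shape $1+1+1+1$ rests on the key combinatorial identity that every $4$-subset of $V(H_6)$ spans exactly $2$ edges, verifiable by checking the $\binom{6}{4}=15$ subsets against the $10$-edge list of $H_6$, contributing another $5/18$. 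Hence $B = 5/9$, and solving gives $d = (5/9)/(1 - 1/216) = 24/43$.

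The main obstacle is not the construction or its analysis, both of which are routine once the right candidate is identified, but rather the familiar gap between the numerical Flagmatic upper bound and the conjectured exact value $24/43$. As discussed in Section~\ref{flagmaticsec} and in \cite{FRV11}, extremal problems whose optimal configurations involve iterated blow-ups typically resist tight upper bounds via Razborov's semi-definite method, so closing this gap would likely require an essentially new idea; for the purposes of the proposition as stated, however, one only needs to record the bound Flagmatic produces.
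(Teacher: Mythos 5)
Your proposal matches the paper's proof exactly: the upper bound from the Flagmatic certificate \verb|k4-max42.js|, and the lower bound from the balanced iterated blow-up of $H_6$. The paper leaves the density computation to the reader, but your calculation is correct—in particular, the key facts that every $4$-subset of $V(H_6)$ spans exactly $2$ edges and that the shape $2{+}1{+}1$ has probability $5/9$ both check out, giving $d = (5/9)/(1-1/216) = 24/43$ as claimed.
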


\begin{proof} The upper bound is from a flag algebra calculation using Flagmatic (see Section~\ref{flagmaticsec} for how to obtain a certificate). The lower bound is from a balanced iterated blow-up of $H_6$.
\end{proof}

We believe that the lower bound is tight:

\begin{conjecture}\label{4.2nok4-conj}
\[\pi_{4.2}(K_4^-)=24/43.\]
\end{conjecture}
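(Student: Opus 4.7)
The lower bound in Conjecture~\ref{4.2nok4-conj} comes from a direct computation in the iterated balanced blow-up of $H_6$. Let $\gamma$ denote the asymptotic $4.2$-density in a single-level balanced blow-up of $H_6$. Since every edge of such a blow-up uses three distinct parts, any $4$-set confined to a single part spans no edges; by enumerating the distributions of a random $4$-set across the six parts and matching them against the ten edges of $H_6$, one obtains $\gamma = 5/9$. If $D$ is the limiting $4.2$-density in the iterated blow-up, the recursion
\[D\binom{n}{4}=6D\binom{n/6}{4}+\gamma\binom{n}{4}+o(n^4)\]
gives $D=216\gamma/215=24/43$.

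The upper bound is where the substantive work lies, and since iterated blow-up constructions are notoriously resistant to the semi-definite method (as stressed in~\cite{FRV11}), the plan is to combine flag algebras with a structural induction. First, I would prove the non-iterated analogue $\pi_{4.2}(K_4^-,F_{3,2})=5/9$ exactly via Flagmatic: imposing independent joint neighbourhoods kills the iterative extremal structure, which is precisely the regime in which the semi-definite method can close the gap. Second, I would upgrade this exact bound to a stability statement along the lines of~\cite{FRV12}, proving that any $K_4^-$-free, $F_{3,2}$-free $3$-graph whose $4.2$-density is within $o(1)$ of $5/9$ is at edit distance $o(n^3)$ from a balanced blow-up of $H_6$. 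Third, I would run an induction on $n$: for an extremal $K_4^-$-free $G$, either $G$ contains $o(n^5)$ copies of $F_{3,2}$---in which case the stability step places $G$ near a balanced $H_6$-blow-up and the recursion $D_n \leq D_{n/6}/216 + 5/9 + o(1)$ drives $D_n$ to $24/43$---or $G$ contains many copies of $F_{3,2}$, and the joint neighbourhood of a suitably chosen pair of vertices supplies a clean piece to peel off before recursing on a smaller graph.

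The main obstacle is the third step: passing from an \emph{approximate} balanced $H_6$-blow-up to an \emph{exact} partition on which the induction actually closes. Perturbations in the part sizes or in the induced sub-$3$-graphs propagate through the recursion, and forcing $D_n \to 24/43$ rather than a slightly weaker bound demands a quantitative rate in the stability estimate of step two, beyond the qualitative framework of~\cite{FRV12}. A Pikhurko-style symmetrisation argument~\cite{P11a}, adapted from maximising the edge density to maximising $e_{4.2}$, seems the most promising route, but verifying that the $4.2$-functional behaves monotonically under the local vertex-cloning moves used in such symmetrisations is far from automatic, and will likely require a dedicated combinatorial analysis of how induced $4.2$-counts change when one vertex is merged into another.
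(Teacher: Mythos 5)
The statement you are trying to prove is a \emph{conjecture}, and the paper does not prove it. The paper only establishes the bounds
\[0.558139 < 24/43 \leq \pi_{4.2}(K_4^-) < 0.558378\]
in Proposition~\ref{4.2nok4-}, and explicitly leaves the exact value open. So there is no ``paper proof'' to compare against; the relevant question is whether your outline actually closes the gap, and it does not.

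Your lower-bound computation is essentially correct. The recursion you write does give $D=24/43$, and it is worth noting why it is valid: in the iterated blow-up of $H_6$ the internal $4$-sets contribute $0$ to the count of induced $4.2$'s in the single-level blow-up, and the non-internal $4$-sets induce $4.2$ with \emph{exactly} the same probability in the iterated construction as in the single-level one (a $4$-set of type $(3,1)$ can gain at most one inner edge, which cannot produce a $4.2$, and the types $(2,2)$, $(2,1,1)$, $(1,1,1,1)$ cannot see any inner edges at all). This is the same iterated blow-up lower bound the paper uses. I would also point out that a cleaner way to see $\gamma=5/9$ is that every $4$-set of $V(H_6)$ hitting four distinct vertices spans exactly two of the ten edges of $H_6$, which makes the $(1,1,1,1)$ contribution equal to $1$ rather than requiring a case count.

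For the upper bound, your step one, proving $\pi_{4.2}(K_4^-,F_{3,2})=5/9$ via Flagmatic, is already Theorem~\ref{4.2nok4-nof32} of this paper, so that part is done. But the genuine gap is step three, and you identify it yourself: there is no known way to bootstrap the $F_{3,2}$-free exact result and its stability into an exact bound for the unrestricted problem. The dichotomy ``few copies of $F_{3,2}$ versus many'' does not obviously produce a clean piece to peel off; in the ``many'' case, a pair with a large non-independent joint neighbourhood gives structure, but it is not clear that this structure can be removed while preserving $K_4^-$-freeness and controlling the $4.2$-count. In the ``few'' case, stability gives an approximate $H_6$-blow-up, but the error terms accumulate across the $O(\log n)$ levels of iteration and there is no obvious mechanism forcing the induced sub-$3$-graphs at each level to themselves be near-extremal rather than merely near-$H_6$-blow-ups. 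A Pikhurko-style symmetrisation would need to be shown monotone for the $4.2$-functional, and this is an open technical point, not a routine adaptation. In short, this remains an open conjecture: your outline correctly identifies the lower bound and a plausible line of attack for the upper bound, but does not constitute a proof.
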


As before, restricting the setting to that of $3$-graphs with independent neighbourhoods helps quite a lot, both for the original Tur\'an problem and for the Tur\'an $4.2$-density problem. In \cite{FRV11} it was proved that $\pi(K_4^-, F_{3,2})=5/18$. The extremal construction, a balanced blow-up of $H_6$, is also extremal for the following problem.

\begin{theorem}\label{4.2nok4-nof32}
\[ \pi_{4.2}(K_4^-,F_{3,2})=5/9. \]
\end{theorem}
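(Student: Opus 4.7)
The plan is to derive the upper bound $\pi_{4.2}(K_4^-, F_{3,2}) \le 5/9$ via Razborov's semi-definite method, as with the other results in this section, and to match it with a lower bound coming from a balanced blow-up of $H_6$.

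For the lower bound, the key combinatorial observation is that $H_6$ is remarkably symmetric: a direct check of all $\binom{6}{4}=15$ four-element subsets of $V(H_6)$ shows that each contains \emph{exactly} $2$ of the $10$ edges of $H_6$, and a separate check shows that every pair $\{x,y\} \subseteq V(H_6)$ has $|\Gamma_{xy}|=2$. Hence $H_6$ contains no $K_4^-$, and it has (trivially) independent neighbourhoods. Both of these properties pass to balanced blow-ups, since any 4-set or joint-neighbourhood triple in a blow-up with two vertices in the same part collapses to an edge-count bounded by that of some smaller configuration in $H_6$. Hence $H_6(m)$ is $\{K_4^-, F_{3,2}\}$-free for every $m \ge 1$.

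It then remains to count induced copies of $4.2$ in $H_6(m)$. A 4-set with distribution $(a_1,\ldots,a_6)$ across the six parts spans $\sum_{\{i,j,k\} \in E(H_6)} a_ia_ja_k$ edges. A short case analysis shows that the distributions $(4),(3,1),(2,2)$ all contribute $0$ edges, a $(2,1,1)$ distribution on parts $\{i,j,k\}$ contributes exactly $2$ if those parts form an edge of $H_6$ (and $0$ otherwise), and every $(1,1,1,1)$ distribution contributes exactly $2$ by the combinatorial property of $H_6$ established above. Summing, the number of induced $4.2$-copies is asymptotically
\[
10 \cdot 3 \cdot \binom{m}{2} m^2 \;+\; \binom{6}{4} m^4 \;\sim\; 30\, m^4,
\]
and dividing by $\binom{6m}{4} \sim 54\, m^4$ yields the lower bound $5/9$.

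For the upper bound, I would invoke Flagmatic to produce a semi-definite programming certificate (named \verb|k4-f32max42.js| in the authors' table). The main difficulty of the whole argument is thus delegated to this semi-definite computation. The role of the additional forbidden subgraph $F_{3,2}$ is, as in the authors' earlier work, to rule out iterated blow-up extremal constructions (cf.\ Proposition~\ref{4.2nok4-}, where the method falls just short without this restriction); such iterated constructions are, heuristically, the main obstacle to closing the gap for the unrestricted problem, and imposing independent neighbourhoods forces a single-level blow-up optimum that the method can match.
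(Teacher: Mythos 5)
Your proposal follows the same approach as the paper: the upper bound is delegated to the Flagmatic/semi-definite certificate, and the lower bound comes from a balanced blow-up of $H_6$. The paper's proof is terse (it merely names the construction), whereas you carry out the combinatorial verification in full — and your verification is correct: every $4$-set of $V(H_6)$ does span exactly $2$ of the $10$ edges, every codegree $|\Gamma_{xy}|$ equals $2$, these facts persist under blow-up, and the resulting density count $30m^4/(54m^4) \to 5/9$ is right.
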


\begin{proof} The upper bound is from a flag algebra calculation using Flagmatic (see Section~\ref{flagmaticsec} for how to obtain a certificate). The lower bound is from a balanced blow-up of $H_6$.
\end{proof}

	\subsection{Forbidding $K_4^-$ and $C_5$}

In~\cite{FRV11}, we considered the problem  of forbidding both $K_4^-$ and $C_5$. We have a lower bound of $\pi(K_4^-,C_5) \geq 1/4$ by considering a balanced blow-up of a $3$-edge, with the construction iterated inside each of the $3$ parts; and we gave an upper bound of $\pi(K_4^-, C_5) < 0.251073$ using the semi-definite method, leading us to conjecture that the lower bound is tight:

\begin{conjecture}[\cite{FRV11}]\label{nok4-c5conj}
\[\pi(K_4^-,C_5)=1/4.\]
\end{conjecture}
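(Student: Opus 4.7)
The plan is to prove the conjecture by combining a stability analysis of a simpler auxiliary problem (where we additionally forbid $F_{3,2}$) with an induction on $n$ that exploits the recursive structure of the conjectured extremal construction. The lower bound is immediate: writing $\alpha$ for the limiting edge-density of the iterated blow-up of a $3$-edge, the first level of the iteration contributes $(n/3)^3$ cross-edges and the three interior blow-ups contribute $\alpha \binom{n/3}{3}$ edges each, leading to the fixed-point equation $\alpha = \tfrac{2}{9} + \tfrac{1}{9}\alpha$, whose solution is $\alpha = 1/4$. Everything rests on the matching upper bound.

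The first main step would be to establish the exact value $\pi(K_4^-, C_5, F_{3,2}) = 2/9$, with the (non-iterated) balanced complete $3$-partite $3$-graph as the extremal construction. As discussed in~\cite{FRV11} and Section~\ref{flagmaticsec}, the heuristic obstruction Flagmatic encounters for iterated extremal structures disappears once we forbid $F_{3,2}$, so the semi-definite method ought to close the gap here. Crucially, I would also extract from the Flagmatic certificate a stability statement in the spirit of Pikhurko~\cite{P11a} and the forthcoming note~\cite{FRV12}: any $\{K_4^-, C_5, F_{3,2}\}$-free $3$-graph on $n$ vertices with at least $(\tfrac{2}{9} - \varepsilon)\binom{n}{3}$ edges must lie within edit distance $\delta(\varepsilon) n^3$ of a balanced complete tripartite $3$-graph, with $\delta(\varepsilon) \to 0$ as $\varepsilon \to 0$.

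The second main step is a bootstrapping induction on $n$. Let $G$ be a $\{K_4^-, C_5\}$-free $3$-graph on $n$ vertices, and assume every such graph on fewer vertices has edge-density at most $\tfrac{1}{4} + o(1)$. Every copy of $F_{3,2}$ in $G$ comes with a ``core'' triple $\{x_1, x_2, x_3\}$ spanning an edge together with a common link pair; I would argue that $K_4^-$-freeness and $C_5$-freeness force the $F_{3,2}$-cores to cluster consistently into a rough tripartition $V(G) = A \sqcup B \sqcup C$, such that after small edit-distance corrections the cross-edges form a near-complete $3$-partite $3$-graph. Each of $G[A], G[B], G[C]$ is again $\{K_4^-, C_5\}$-free and, by the induction hypothesis, contains at most $(\tfrac{1}{4} + o(1)) \binom{|A|}{3}$ edges, and similarly for $B$ and $C$. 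Since the cross-edges number at most $(1 + o(1))(n/3)^3$, contributing density at most $\tfrac{2}{9} + o(1)$, and the interior contribution is at most $\tfrac{1}{4} \cdot \tfrac{1}{9}\binom{n}{3} = \tfrac{1}{36}\binom{n}{3}$, the totals sum to exactly $\tfrac{2}{9} + \tfrac{1}{36} = \tfrac{1}{4}$, closing the induction.

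The principal obstacle is the clustering step: when $G$ contains many copies of $F_{3,2}$, the stability result from Step~1 cannot be applied directly, and one must instead use $K_4^-$- and $C_5$-freeness to force the $F_{3,2}$-cores coming from different witnesses to agree on the same tripartition of $V(G)$. Distinct cores may overlap in subtle ways, so a careful combinatorial cleanup is needed; essentially one must show that cores behave like ``hyperedges of a triangulation'' of a single underlying tripartite structure. A secondary difficulty is controlling the accumulation of $o(1)$ errors through the induction, which will likely require a quantitative, polynomial-type stability bound in Step~1 rather than a purely asymptotic one, together with care at the base case where one passes from the weak inductive hypothesis at level $k$ to the full conclusion at level $k+1$ without losing constants.
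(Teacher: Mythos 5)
The statement you are trying to prove is, in the paper, a \emph{conjecture}, not a theorem: the authors explicitly leave $\pi(K_4^-,C_5)=1/4$ open (Conjecture~\ref{nok4-c5conj}), and neither this paper nor its predecessor~\cite{FRV11} contains a proof. So no comparison with ``the paper's proof'' is possible; instead, let me point out why your outline would not succeed.

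Your Step~1 is wrong in a concrete way. You assert that $\pi(K_4^-,C_5,F_{3,2})=2/9$, with the balanced complete tripartite $3$-graph (i.e.\ the non-iterated blow-up of a single $3$-edge) as the extremal configuration. In fact the paper records (Theorem~\ref{nok4-c5f32}, quoted from~\cite{FRV11}) that
\[
\pi(K_4^-,C_5,F_{3,2}) = \tfrac{12}{49},
\]
with the unique extremal configuration being a balanced blow-up of the $7$-vertex $3$-graph $H_7$, the union of two edge-disjoint Fano planes. The authors even emphasise that the $H_7$ blow-up ($12/49\approx 0.2449$) \emph{strictly beats} the $3$-edge blow-up ($2/9\approx 0.2222$) under these constraints. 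So the stability statement you propose to extract --- that a nearly $(2/9)$-dense $\{K_4^-,C_5,F_{3,2}\}$-free $3$-graph must be close to tripartite --- is false; dense examples look $7$-partite (like a blow-up of $H_7$), not $3$-partite.

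This is not merely a numerical correction; it undermines the bootstrapping step. Iterating the $H_7$ blow-up inside its seven parts also gives edge-density $1/4$, since the fixed-point equation becomes $\alpha = 12/49 + \alpha/49$, i.e.\ $\alpha = 1/4$. Hence there are at least two essentially different constructions (iterated $3$-edge blow-up and iterated $H_7$ blow-up) achieving the conjectured extremal density for $\pi(K_4^-,C_5)$. Consequently there is no single stable extremal configuration, and your ``clustering $F_{3,2}$-cores into a consistent tripartition'' step cannot be forced: a near-extremal $\{K_4^-,C_5\}$-free $3$-graph could instead organise around a $7$-partition, or around something that locally looks like one of these and globally like the other. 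A stability-plus-induction argument would have to cope with both (and rule out hybrids), which is a qualitatively different and much harder undertaking than what you sketch. In addition, your proposal leaves the key structural claims --- the clustering of cores, the polynomial-type quantitative stability, the control of error accumulation --- as unproved, as you yourself flag; so even setting aside the error in Step~1, this is a programme for a proof, not a proof.
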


Another construction yielding the same lower bound is as follows: let $H_7$ be the $6$-regular 3-graph on $7$ vertices
\[H_7=([7], \{124,137,156,235,267,346,457,653,647,621,542,517,431,327\}).\]
This can be thought of as the unique (up to isomorphism) $3$-graph $G$ on $7$ vertices such that for every vertex $x \in V(G)$, the link-graph $G_{x}=(V(G)\setminus\{x\}, \{yz: \ xyz \in E(G)\})$ is the $6$-cycle. Alternatively, $H_7$ can be obtained as the union of two edge-disjoint copies of the Fano plane on the same vertex set
\begin{align*}
F_1&=([7], \{124,137,156,235,267,346,457\}) \textrm{ and }\\
F_2 &=([7], \{653,647,621,542,517,431,327\}),
\end{align*}
as depicted in Figure~\ref{furedifano}. (This elegant perspective is due to F\"uredi.)

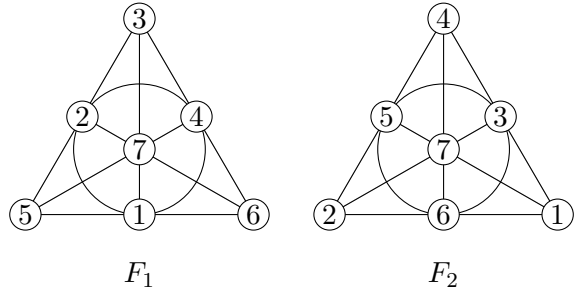
\begin{figure}
\begin{center}
\begin{tikzpicture}
\tikzstyle{blob} = [circle, draw, fill=white, text centered, inner sep=1pt]	
\draw[black] (3/2,sin{60}) circle (sin{60});
\draw[black,fill=black] (0,0) circle (2pt) node[blob] {$5$} --
(3,0) circle (2pt) node[blob] {$6$} --
(3/2,3*sin{60}) circle(2pt) node[blob] {$3$} -- (0,0)
(0,0) -- (30:sqrt{27}/2) circle (2pt) node[blob] {$4$} (3,0) --
+(150:sqrt{27}/2) circle (2pt) node[blob] {$2$}
(3/2,3*sin{60})--(3/2,0) circle (2pt) node[blob] {$1$}
(3/2,sin{60}) circle (2pt) node[blob] {$7$};
\draw (1.5,-0.5) node[below] {$F_1$};
\begin{scope}[xshift=4cm]
\draw[black] (3/2,sin{60}) circle (sin{60});
\draw[black,fill=black] (0,0) circle (2pt) node[blob] {$2$} --
(3,0) circle (2pt) node[blob] {$1$} --
(3/2,3*sin{60}) circle(2pt) node[blob] {$4$} -- (0,0)
(0,0) -- (30:sqrt{27}/2) circle (2pt) node[blob] {$3$} (3,0) --
+(150:sqrt{27}/2) circle (2pt) node[blob] {$5$}
(3/2,3*sin{60})--(3/2,0) circle (2pt) node[blob] {$6$}
(3/2,sin{60}) circle (2pt) node[blob] {$7$};
\draw (1.5,-0.5) node[below] {$F_2$};
\end{scope}
\end{tikzpicture}
\end{center}
\caption{F\"uredi's double Fano construction.} \label{furedifano}
\end{figure}

It is an easy exercise to check that a balanced blow-up of $H_7$ with the construction iterated inside each of the $7$ parts is both $C_5$-free and $K_4^-$-free. (Alternatively, see~\cite{FRV11} for details.) This also gives us a lower bound of $1/4$ on $\pi(K_4^-, C_5)$. When we require independent neighbourhoods, iterated blow-ups are prohibited, and it turns out that a non-iterated blow-up of $H_7$ does better than a blow-up of a $3$-edge (which gives edge-density $2/9$):

\begin{theorem}[\cite{FRV11}]\label{nok4-c5f32}
\[\pi(K_4^-, C_5,F_{3,2})=12/49,\]
with the lower bound attained by a balanced blow-up of $H_7$.
\end{theorem}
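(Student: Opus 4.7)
The plan is to establish matching upper and lower bounds at $12/49$, with the lower bound coming from the construction and the upper bound from Razborov's semi-definite method.

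For the lower bound, I would verify that the (non-iterated) balanced blow-up of $H_7$ on $n$ vertices is simultaneously $K_4^-$-free, $C_5$-free, and $F_{3,2}$-free, and then compute its edge density. The edge count is immediate: $H_7$ is $6$-regular on $7$ vertices, so it has $7\cdot 6/3=14$ edges, each contributing $(n/7)^3$ copies in the balanced blow-up, yielding asymptotic density $14\cdot 6/7^3 = 12/49$. For the structural properties, the key observation is that $H_7$ is the union of two edge-disjoint Fano planes, so each pair of vertices $\{x,y\}$ lies in exactly one edge of each Fano plane; hence every joint neighbourhood $\Gamma_{xy}$ has size $2$. Since $|\Gamma_{xy}|=2<3$, $\Gamma_{xy}$ contains no $3$-edge, so $H_7$ has independent neighbourhoods (and since this property is preserved under blow-ups, the blow-up is $F_{3,2}$-free). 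For $C_5$ and $K_4^-$, note that in both configurations every pair of vertices is contained in at least one edge, so any such subgraph in the blow-up must have its vertices in distinct parts; therefore it suffices to verify that $H_7$ itself contains neither $C_5$ nor $K_4^-$. Here the $K_4^-$ check reduces to showing that for no pair $\{a,b\}$ with $\Gamma_{ab}=\{c,d\}$ does $acd$ form an edge, which can be done by a short case analysis using the symmetry of the double-Fano structure; a similar case analysis rules out $C_5$.

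For the upper bound, I would invoke the semi-definite method via Flagmatic. The strategy is to fix a flag type order (one should try orders $7$ and $8$ to see which makes the bound tight), enumerate $\{K_4^-, C_5, F_{3,2}\}$-free flags and types of that order, and solve the resulting semi-definite program to produce positive semi-definite matrices and scalar coefficients whose associated sum-of-squares identity certifies $\pi(K_4^-, C_5, F_{3,2}) \le 12/49$. The resulting certificate, produced by Flagmatic and made available in the ancillary files, can be independently verified using the \verb|inspect_certificate.py| tool.

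The main obstacle is the upper bound: it is not \emph{a priori} clear that the flag-algebra semi-definite relaxation at any feasible order will match the conjectured value exactly. The heuristic reason this particular problem is tractable (whereas $\pi(K_4^-, C_5)$ remains a conjecture) is precisely that forbidding $F_{3,2}$ rules out iterated blow-up constructions, which are known to be difficult for the semi-definite method; see \cite{FRV11} for a detailed discussion. Once the SDP solution is found and rounded to exact arithmetic, the rest of the argument is mechanical.
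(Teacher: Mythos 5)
Your proposal is correct and follows the same approach as the cited source~\cite{FRV11} (this paper itself merely records the statement with a citation, having proved it in that earlier work): the lower bound from the edge-density count $14\cdot 6/7^3=12/49$ in the balanced blow-up of $H_7$ together with the co-degree-two observation from the double-Fano structure, and the upper bound from a Flagmatic SDP certificate. Your structural checks that the blow-up inherits $F_{3,2}$-, $K_4^-$- and $C_5$-freeness from $H_7$ are sound, and the reduction of the $K_4^-$ check to pairs of co-degree two is exactly the kind of short verification intended by the paper's ``easy exercise'' remark.
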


Let us now turn to the problem of maximising the number of copies of $4.2$ in a $(C_5, K_4^-)$-free $3$-graph. As we are forbidding $K_4^-$ (which is the same as forbidding $4.3$), one might expect the problem of maximising the density of $4$-sets spanning $2$ edges to be essentially equivalent to the problem of maximising the number of edges. However, the extremal behaviour of the two problems is different. An iterated blow-up of $H_7$ yields a lower bound of $20/57$ ($\approx 0.350877$) for $\pi_{4.2}(K_4^-,C_5)$, but an iterated blow-up of a $3$-edge does much better:

\begin{proposition}\label{4.2nok4-c5}
\[0.461538 < 6/13\leq \pi_{4.2}(K_4^-,C_5) < 0.461645.\]
\end{proposition}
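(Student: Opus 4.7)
The plan is to establish the lower bound $6/13$ by an explicit iterated construction, and to obtain the upper bound via the semi-definite method, exactly in the style used elsewhere in this section.

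For the lower bound, consider the balanced blow-up of a single $3$-edge iterated inside each of its three parts---the same construction already used in~\cite{FRV11} to establish the lower bound in Conjecture~\ref{nok4-c5conj}, and which is known there to be both $K_4^-$-free and $C_5$-free. To determine its asymptotic $4.2$-density $d$, condition on how a uniformly random $4$-set is distributed across the three top-level parts. The possible distributions $(4,0,0)$, $(3,1,0)$, $(2,2,0)$, $(2,1,1)$ occur with probabilities $1/27$, $8/27$, $2/9$ and $4/9$ respectively. A direct case analysis shows that in the $(2,1,1)$ case the four vertices always span exactly two edges (the two top-level triples that meet all three parts, with no recursive edges possible since no part contains three of the four vertices), while in the $(4,0,0)$ case every edge comes from the recursion inside one part, so by self-similarity the contribution is exactly $d$; the distributions $(3,1,0)$ and $(2,2,0)$ yield at most one edge and therefore contribute nothing to the $4.2$-density. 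This gives the self-similar recursion
\[d = \frac{1}{27}\,d + \frac{4}{9},\]
whose unique solution is $d = 6/13$, as claimed.

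For the upper bound, I would invoke Razborov's semi-definite method via Flagmatic; the certificate \verb|k4-c5max42.js| furnishes the sum-of-squares witness for the numerical bound $\pi_{4.2}(K_4^-,C_5) < 0.461645$. The main obstacle, as with the several other iterated-blow-up conjectures appearing in this paper, is that constructions of this form are heuristically difficult for the semi-definite method (see the discussion in~\cite{FRV11}): one expects a small residual gap rather than a sharp match, and closing it to establish the conjectural equality $\pi_{4.2}(K_4^-,C_5)=6/13$ will likely need techniques beyond Flagmatic.
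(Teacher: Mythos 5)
Your proposal is correct and takes essentially the same approach as the paper: the lower bound comes from the balanced iterated blow-up of a $3$-edge (the same $(K_4^-,C_5)$-free construction from~\cite{FRV11}), and the upper bound from the Flagmatic certificate. Your case analysis verifying the recursion $d = d/27 + 4/9$, hence $d = 6/13$, is correct; the paper states the lower-bound construction without spelling out this computation.
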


\begin{proof} The upper bound is from a flag algebra calculation using Flagmatic (see Section~\ref{flagmaticsec} for how to obtain a certificate). The lower bound is from a balanced iterated blow-up of a $3$-edge.
\end{proof}

We make the inevitable conjecture that the lower bound in Proposition~\ref{4.2nok4-c5} is tight:

\begin{conjecture}\label{4.2nok4-c5conj}
\[\pi_{4.2}(K_4^-,C_5)=6/13.\]
\end{conjecture}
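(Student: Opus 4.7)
The lower bound is given by Proposition~\ref{4.2nok4-c5}, so the task is to establish the matching upper bound $\pi_{4.2}(K_4^-, C_5) \leq 6/13$. The flag-algebra bound of $0.461645$ from that proposition is tantalisingly close to $6/13 \approx 0.461538$, but closing the gap by the semi-definite method alone is unlikely to succeed, since the extremal construction is an iterated blow-up and such constructions are known to be problematic for the method for the heuristic reasons discussed in~\cite{FRV11}. My plan is therefore to combine the flag-algebra bound with a structural stability argument exploiting the rigid local structure imposed by $K_4^-$-freeness.

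The key structural fact I would use is that $G$ is $K_4^-$-free if and only if the link graph $L_v = (V \setminus \{v\}, \{xy : vxy \in E(G)\})$ is triangle-free for every vertex $v$. Indeed, by pigeonhole on the nine vertex-edge incidences, any $K_4^-$ has some vertex contained in all three of its $3$-edges, and that vertex's link contains the triangle on the remaining three vertices; conversely, a triangle $xyz$ in $L_v$ immediately yields a $K_4^-$ on $\{v,x,y,z\}$. Combined with $C_5$-freeness and the near-extremal assumption on the $4.2$-count, I would aim for a stability theorem of the following form: for every $\varepsilon > 0$ there is $\delta > 0$ such that any $(K_4^-, C_5)$-free $3$-graph $G$ on $n$ vertices with $e_{4.2}(G) \geq (6/13 - \delta)\binom{n}{4}$ admits a partition $V(G) = V_1 \sqcup V_2 \sqcup V_3$ with $|V_i| = n/3 + o(n)$ in which all but at most $\varepsilon n^3$ edges are transversals $v_1v_2v_3$ with $v_i \in V_i$.

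With such a partition in hand, the theorem would follow by a self-similar induction. Writing $f(n) = \ex_{4.2}(n, \{K_4^-, C_5\})$, the partition together with the induction hypothesis $f(m) \leq (6/13)\binom{m}{4}$ for $m < n$ would give
\[
f(n) \leq \tfrac{4}{9}\binom{n}{4} + 3 f(n/3) + o(n^4),
\]
since $(2,1,1)$ $4$-sets (a fraction $4/9 + o(1)$ of all $4$-sets in a balanced $3$-partition) contain exactly two transversal $3$-edges and hence contribute one apiece to the induced $4.2$-count; $(3,1,0)$ and $(2,2,0)$ $4$-sets span at most one transversal and thus contribute nothing to the $4.2$-count (up to the $o(n^4)$ error absorbing non-transversal edges); and $4$-sets lying entirely in one $V_i$ contribute at most $f(n/3)$ in aggregate $3f(n/3)$. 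Solving the fixed-point equation $x = \tfrac{4}{9} + \tfrac{1}{27}x$ yields exactly $x = 6/13$.

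The main obstacle is the stability step. Triangle-freeness of all links is a strong constraint, but not on its own strong enough to force a $3$-partite structure, since a triangle-free graph need not be bipartite. Moreover, the existence of F\"uredi's $H_7$ construction shows there are essentially different $(K_4^-, C_5)$-free extremal candidates for the related Tur\'an problem, and ruling out hybrid near-extremal configurations that mix iterated $3$-edge blocks with $H_7$-like pieces is delicate (even though the iterated $H_7$ gives only $4.2$-density $20/57 < 6/13$, and hence is suboptimal, hybrid configurations must be ruled out uniformly). Progress here is presumably closely tied to resolving the underlying Tur\'an conjecture $\pi(K_4^-, C_5) = 1/4$ of Conjecture~\ref{nok4-c5conj}, which remains open; a full proof of Conjecture~\ref{4.2nok4-c5conj} will likely require a genuinely new idea beyond the semi-definite method.
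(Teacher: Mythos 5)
The statement you were asked to prove is labelled a \emph{conjecture} in the paper, and the paper offers no proof of it; the only support given is the numerical evidence of Proposition~\ref{4.2nok4-c5} (the flag-algebra upper bound $0.461645$ against the lower bound $6/13 \approx 0.461538$) and the exact companion result Theorem~\ref{4.2nok4-c5f32} in the independent-neighbourhood setting. So there is no proof in the paper against which to compare your argument, and your proposal should be read as a roadmap rather than as a completed proof.

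As a roadmap it is sound and correctly calibrated. The equivalence between $K_4^-$-freeness and triangle-freeness of all link graphs is correct (the pigeonhole argument for the apex vertex of a $K_4^-$ is right), and your recursion is exactly the computation that produces $6/13$: in a balanced $3$-partition with only transversal edges, a $(2,1,1)$ $4$-set contributes an induced $4.2$ and such sets form a $4/9 + o(1)$ fraction of all $4$-sets, whereas $(2,2,0)$ and $(3,1,0)$ sets contribute nothing; the fixed point of $x = 4/9 + x/27$ is indeed $6/13$. You also correctly identify that the entire weight of the argument rests on an unproven stability statement, and that this is not a minor technicality: even the underlying edge-density conjecture $\pi(K_4^-,C_5)=1/4$ (Conjecture~\ref{nok4-c5conj}) is open, the $H_7$ construction provides a genuinely different near-extremal family to exclude, and iterated blow-up extremal structures are precisely the ones the semi-definite method has trouble certifying. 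In short, your outline is consistent with how one would hope to attack the problem, but it does not close the conjecture, and you are right to say so.
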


Besides the relative proximity of the upper and lower bounds in Proposition~\ref{4.2nok4-c5}, further motivation for  Conjecture~\ref{4.2nok4-c5conj} can be found in the following exact result.

\begin{theorem}\label{4.2nok4-c5f32}
\[ \pi_{4.2}(K_4^-,C_5,F_{3,2})=4/9.\]
\end{theorem}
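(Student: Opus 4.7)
The plan is to prove the theorem in two parts: exhibit a construction matching the bound $4/9$, then use Razborov's semi-definite method to establish the matching upper bound.

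For the lower bound, I would take the balanced blow-up of a single $3$-edge. That is, partition $[n]$ into three parts $V_1, V_2, V_3$ of sizes as equal as possible, and declare $xyz$ to be an edge exactly when $x,y,z$ lie in three distinct parts. In this graph, a $4$-set $S$ intersecting the parts in pattern $(a_1,a_2,a_3)$ (with $a_1+a_2+a_3=4$) spans exactly $a_1a_2a_3$ edges. The only way to get $a_1a_2a_3 = 2$ is to have $\{a_1,a_2,a_3\} = \{2,1,1\}$, and $a_1a_2a_3 \geq 3$ is impossible, so this immediately shows the construction is $K_4^-$-free and that every $4$-set spanning two edges has pattern $(2,1,1)$. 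Counting such $4$-sets gives
\[
\frac{3\binom{n/3}{2}(n/3)^2}{\binom{n}{4}} = \frac{4}{9} + o(1).
\]
To verify $F_{3,2}$-freeness, note that three vertices sharing a pair with two edges would have to lie in three distinct parts, forcing the other two vertices of $F_{3,2}$ into a common part, so the final edge $345$ of $F_{3,2}$ cannot be a transversal. For $C_5$-freeness, a short case analysis on the cyclic structure of $C_5$ shows that the requirement that each of its five edges be a transversal leads to a contradiction in the residues of its vertices modulo the $3$-partition.

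For the upper bound, I would apply the semi-definite method exactly as in the earlier results in the paper, using Flagmatic as a black box. Concretely: work in the flag algebra generated by finite $\{K_4^-, C_5, F_{3,2}\}$-free $3$-graphs; enumerate all such $3$-graphs on $N$ vertices (likely $N = 7$ or $N = 8$) and, for each admissible type $\sigma$, all $\sigma$-flags on the appropriate number of vertices; and search for positive semi-definite matrices $Q_\sigma$ expressing the inequality
\[
\tfrac{4}{9} - d(4.2, G) \;=\; \sum_{\sigma} \bigl\langle \text{averaged flag products},\, Q_\sigma \bigr\rangle \;+\; \text{(nonneg.\ density terms)}
\]
as an identity in the flag algebra. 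Then I would round the numerical SDP solution to an exact rational solution preserving positive semi-definiteness, producing the certificate \verb|k4-c5f32max42.js| that \verb|inspect_certificate.py| can verify.

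The main obstacle is the usual one for this method: the problem is at the boundary where the extremal construction is a simple, non-iterated blow-up (which is exactly the situation flag algebras handle well, cf.\ the remarks about $F_{3,2}$ in Section~2.2), so one expects a tight certificate to exist at reasonable $N$, but rounding to exact arithmetic while preserving the PSD condition and identifying the correct sharp constraints (which flags correspond to non-edges in the limit construction) can require some trial and error in setting up the SDP. Given the precedent of Theorem~\ref{4.2noc5f32} and Theorem~\ref{4.2nok4-nof32}, both obtained by the same method in closely related settings, I expect the calculation to close cleanly.
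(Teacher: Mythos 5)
Your proposal matches the paper's proof exactly: the lower bound is a balanced blow-up of a single $3$-edge, and the upper bound is a Flagmatic/semi-definite-method certificate. The paper states this tersely; your added verifications that the construction is $K_4^-$-, $C_5$-, and $F_{3,2}$-free and has $4.2$-density $4/9$ are correct but routine and are implicitly assumed in the paper.
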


\begin{proof} The upper bound is from a flag algebra calculation using Flagmatic (see Section~\ref{flagmaticsec} for how to obtain a certificate). The lower bound is from a balanced blow-up of a $3$-edge. \end{proof}

By contrast, a balanced blow-up of $H_7$ only gives a lower bound of $120/343$. Thus when $\{K_4^-, C_5, F_{3,2}\}$ is forbidden, the construction that maximises the density of the most dense $3$-graph on four vertices that is allowed, is different to the construction that maximises the edge-density. And it is different in a rather strong sense: not only are the constructions not isomorphic, but there is no homomorphism from $H_7$ into (a blow-up of) a $3$-edge.

Indeed, label the $3$ parts of the blow-up $G$ of a $3$-edge $A$, $B$ and $C$, and suppose $f:\ V(H_7) \rightarrow A\sqcup B\sqcup C$ is a homomorphism. Since $137$ is an edge of $H_7$, it must then be that $1$, $3$ and $7$ are each mapped to different parts $A,B,C$; without loss of generality we may assume that $f(1) \in A$, $f(3) \in B$ and $f(7) \in C$. Since $134$ is also an edge of $H_7$ we must also have $f(4) \in A$. But then $467$ is an edge of $H_7$ with $f(4),f(7)\in C$, and so cannot be mapped by $f$ to an edge of $G$, contradicting our assumption that $f$ is a homomorphism.

This structural difference between the problems of maximising the number of $3$-edges and of maximising the number of copies of $4.2$ in a $K_4^-$-free $3$-graph is a somewhat surprising phenomenon. We ask whether this is due solely to the fact that we are forbidding $C_5$ and $F_{3,2}$ on top of $K_4^-$:

\begin{question}\label{simultextrem}
Let $m$ and $2\leq t \leq \binom{m}{3}$ be integers. Does there exist for every $n \in \mathbb{N}$ an $m.t$-free $3$-graph on $n$ vertices that has both the maximum number of edges and the maximum number of copies of $m.(t-1)$ possible in an $m.t$-free graph?
\end{question}

Of course this question is most interesting when $t=\binom{m}{3}$; here $m.t$ and $m.(t-1)$ consist of just $K_t$ and $K_t^-$ respectively. In this case we believe the answer to Question~\ref{simultextrem} is `yes', which is, in a weaker form, our Conjecture~\ref{simultextrem} from Section~2.1.

	\subsection{Independent neighbourhoods}

We have now seen several examples of how restricting the setting to $3$-graphs with independent neighbourhoods can render Tur\'an problems significantly more tractable to the semi-definite method; we refer the reader to~\cite{FRV11} for a heuristic discussion of why this might be so. In this subsection, we study Tur\'an $H$-density problems in $F_{3,2}$-free $3$-graphs for their own sake. The Tur\'an density problem for $F_{3,2}$ was solved by F\"uredi, Pikhurko and Simonovits:

\begin{theorem}[F\"uredi, Pikhurko, Simonovits~\cite{FPS03}]
\[\pi(F_{3,2})= 4/9.\]
\end{theorem}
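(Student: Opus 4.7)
The plan is to prove the two directions of the equality $\pi(F_{3,2}) = 4/9$ separately: a matching construction for the lower bound, and a stability-based argument for the upper bound.

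\textbf{Lower bound.} Exhibit the following construction. Partition $V = A \sqcup B$ with $|B| = \lfloor n/3 \rfloor$ and $|A| = n - |B|$, and let $G$ consist of all triples meeting $B$ in exactly one vertex. To verify that $G$ is $F_{3,2}$-free, check that every joint neighbourhood $\Gamma_{uv}$ is independent in $G$. A short case analysis gives $\Gamma_{uv} = B$ when $\{u,v\} \subseteq A$, $\Gamma_{uv} \subseteq A$ when $\{u,v\} \cap B \neq \emptyset$ and $\{u,v\} \not\subseteq B$, and $\Gamma_{uv} = \emptyset$ when $\{u,v\} \subseteq B$. Since every edge of $G$ has exactly one vertex in $B$, neither $A$ nor $B$ supports any edge of $G$, so in each case $\Gamma_{uv}$ is independent. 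The edge count $|E(G)| = |B|\binom{|A|}{2}$ is maximised at $|B| = n/3$, giving $|E(G)| \sim 2n^3/27 \sim (4/9)\binom{n}{3}$.

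\textbf{Upper bound.} For the matching upper bound, I would follow the stability strategy of Füredi, Pikhurko and Simonovits. First, establish a stability result: any $F_{3,2}$-free 3-graph $G$ with density at least $4/9 - o(1)$ admits a partition $V = A \sqcup B$ with $|B| = n/3 + o(n)$ such that all but $o(n^3)$ of the edges of $G$ meet $B$ in exactly one vertex. The key tool is an analysis of the link graphs $L_v$: for each vertex $v$ and each edge $xy$ of $L_v$, the common neighbours of $x$ and $y$ in $L_v$ must form an independent set in $G$ (otherwise $\{v, $ a common neighbour-edge$, x, y\}$ would exhibit $F_{3,2}$), forcing $L_v$ to be close to a complete bipartite graph for typical $v$. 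From the near-bipartite structure of most link graphs one extracts the global bipartition $A \sqcup B$. Second, once stability is in hand, perform a clean-up phase, iteratively reassigning vertices between $A$ and $B$ to show the maximum is attained by the construction itself; then optimise the function $|B|\binom{|A|}{2}$ subject to $|A| + |B| = n$ (elementary calculus yields the optimum at $|B| = n/3$) to conclude density exactly $4/9$.

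\textbf{Main obstacle.} The principal difficulty is the stability step: extracting a bipartition of the right proportions from the local link-graph structure requires a careful density-increment or regularity-based argument that precludes alternative near-extremal configurations (for instance, showing no iterated blow-up or tripartite variant can do better than $4/9$). An alternative approach more in the spirit of the present paper is to use the semi-definite method via Flagmatic: the constraint that $\Gamma_{uv}$ is independent translates directly into a family of vanishing subgraph-density identities, which should in principle deliver the bound $\pi(F_{3,2}) \leq 4/9$ computationally; however, to pin down the extremal configuration one ultimately still needs a structural stability result.
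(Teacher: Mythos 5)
The paper does not give its own proof of this theorem; it is quoted from F\"uredi, Pikhurko and Simonovits~\cite{FPS03} as background for Section~2.5, so there is no in-paper argument to compare against. Your lower bound is correct and matches the extremal configuration the paper describes immediately after the theorem (an unbalanced blow-up of $([2],\{112\})$ with roughly a third of the vertices in part~$2$); the verification that $A$ and $B$ are both independent and that every joint neighbourhood $\Gamma_{uv}$ lies in one of them is exactly the right check, and the optimisation of $|B|\binom{|A|}{2}$ is fine.

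The upper-bound portion, however, is not a proof but a plan, and you are candid about this. Two remarks. First, the link-graph condition you state is not quite the operative one: the reformulation of $F_{3,2}$-freeness you need is that for every $v$ and every vertex $x$, the $L_v$-neighbourhood $N_{L_v}(x)=\Gamma_{vx}$ must be an independent set of $G$; restricting attention to the common $L_v$-neighbourhood of the ends of an $L_v$-edge is a strictly weaker condition and is not, by itself, what forces $L_v$ towards bipartite structure. Second, the leap from ``typical $L_v$ is near-bipartite'' to a single global partition of $V$, and the subsequent clean-up, is precisely the hard content of~\cite{FPS03} (and of~\cite{FPS05} for the stability version), and none of that is supplied. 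A flag-algebra computation in the style of this paper would indeed certify $\pi(F_{3,2})\leq 4/9$ numerically, but that route was not what~\cite{FPS03} did, and as you note it would still leave the structural characterisation of extremal configurations to a separate stability argument. In short: lower bound complete, upper bound correctly identified but not carried out.
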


In fact, they showed rather more: the unique, stable extremal configuration is an unbalanced blow-up of $([2],\{112\})$, with the size of the two parts chosen so as to maximise the number of edges, so that roughly $2/3$ of the vertices are assigned to part $1$ and $1/3$ to part $2$~\cite{FPS05}. Note that this configuration is $K_4$-free. We therefore expect it to maximise the induced density of $K_4^-$ in an $F_{3,2}$-free graph. This does turn out to be the case, with the minor caveat that we need to change the proportion of vertices in each part; we now want a random $4$-set to have exactly three vertices in part $1$ and one in part $2$, rather than a random $3$-set to have two vertices in part $1$ and one in part $2$. 

\begin{theorem}\label{k4-nof32}
\[\pi_{K_4^-}(F_{3,2})=27/64.\]
\end{theorem}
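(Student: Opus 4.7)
My plan is to establish the two bounds separately, with the lower bound arising from an explicit construction and the upper bound from the semi-definite method.

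For the lower bound, I would consider the unbalanced blow-up of $([2],\{112\})$ in which a proportion $1-x$ of the vertices is assigned to part $A$ and proportion $x$ to part $B$, with the edges being all triples having exactly two vertices in $A$ and one in $B$. This 3-graph is $F_{3,2}$-free: in $F_{3,2}=([5], \{123,124,125,345\})$ the triples $123,124,125$ share the pair $\{1,2\}$, forcing $\{1,2\}\subseteq A$ and $\{3,4,5\}\subseteq B$, but then $345$ has no vertex in $A$ and cannot be an edge. A 4-set with $a$ vertices in $A$ and $4-a$ in $B$ spans exactly $\binom{a}{2}(4-a)$ edges, which equals $3$ only when $a=3$. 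Hence the $K_4^-$-density tends to $\binom{4}{3}(1-x)^3 x = 4x(1-x)^3$. Differentiating gives a maximum at $x=1/4$, yielding the value $4\cdot\tfrac14\cdot(\tfrac34)^3=27/64$. Note that the optimal ratio $|A|:|B|=3:1$ differs from the $2:1$ ratio optimal for the Tur\'an density problem of $F_{3,2}$, exactly as foreshadowed in the paragraph preceding the theorem.

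For the upper bound I would apply Razborov's semi-definite method, working inside the flag algebra of $F_{3,2}$-free 3-graphs. The goal is to write
\[
\frac{27}{64} - d(K_4^-, G) \;\ge\; 0
\]
for every $F_{3,2}$-free 3-graph $G$ on $n$ vertices (up to a vanishing error) as a nonnegative combination of squared flags averaged over types. Concretely, one fixes an admissible size $N$ (say $N=7$ or $8$), enumerates the $F_{3,2}$-free 3-graphs on $N$ vertices, picks a collection of types together with flags extending them by one vertex, and asks Flagmatic to find positive semi-definite matrices such that the resulting identity, expanded in the basis of $N$-vertex densities, is coefficient-wise nonnegative and constant-coefficient $27/64$. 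Feasibility of this semi-definite program then yields the desired bound.

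The main obstacle is of course the numerical one: choosing $N$ large enough that the bound is tight at $27/64$ and producing a semi-definite certificate that can be rationalised and verified. The extremal construction imposes strong necessary conditions on any successful certificate, since the matrices must annihilate the 4- and 5-vertex density profile of the unbalanced blow-up; this both guides the search and carries the risk of requiring careful rounding to avoid losing the last digits of the bound. Once Flagmatic produces the certificate (recorded as \verb|f32max43.js|), an independent \verb|inspect_certificate.py| run delivers the rigorous verification needed to complete the proof.
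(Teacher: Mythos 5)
Your proposal coincides with the paper's proof: lower bound from an unbalanced blow-up of $([2],\{112\})$ with three quarters of the vertices in part $A$, $K_4^-$-density $4x(1-x)^3$ maximised at $x=1/4$ to give $27/64$, and upper bound via a Flagmatic certificate (indeed \verb|f32max43.js|). One small slip in your verification of $F_{3,2}$-freeness: the three edges $123,124,125$ sharing the pair $\{1,2\}$ do \emph{not} force $\{1,2\}\subseteq A$ --- one could instead have exactly one of $1,2$ in $A$, in which case $3,4,5$ are all forced into $A$. But then $345$ has three vertices in $A$ and again fails to be an edge, so the conclusion survives; alternatively, note that every joint neighbourhood in this construction lies entirely in $A$ or entirely in $B$, both of which span no edges, so neighbourhoods are independent and $F_{3,2}$-freeness is immediate.
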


\begin{proof} The upper bound is from a flag algebra calculation using Flagmatic (see Section~\ref{flagmaticsec} for how to obtain a certificate). The lower bound is from a blow-up of $([2], \{112\})$, with three quarters of the vertices assigned to part $1$ and the rest to part $2$.
\end{proof}

As the above construction is $C_5$-free, Theorem~\ref{k4-nof32} also implies \[\pi_{K_4^-}(C_5, F_{3,2})=27/64, \]
providing us with an analogue for $K_4^-$ of Theorem~\ref{4.2noc5f32} from Section~2.2.

The next $3$-graph whose density in $F_{3,2}$-free $3$-graphs we investigate is $K_4$. Observing that $K_5$ is not $F_{3,2}$-free, one is naturally led to guess that the $K_4$-density is maximised by taking a balanced blow-up of $K_4$. This does indeed turn out to be the case:

\begin{theorem}\label{k4nof32}
\[\pi_{K_4}(F_{3,2})=3/32.\]
\end{theorem}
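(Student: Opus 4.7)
The plan is to establish matching upper and lower bounds of $3/32$. For the lower bound, I would take $G_n$ to be a balanced blow-up of $K_4$ on $n$ vertices, with four parts of size $n/4$. The first step is to verify that $G_n$ is $F_{3,2}$-free: since every edge of a blow-up of $K_4$ uses one vertex from each of three distinct parts, an embedding of $F_{3,2}$ with ``heavy'' pair $\{1,2\}$ would force $1$ and $2$ to lie in two different parts, say $A$ and $B$; then, for each of the edges $123,124,125$ to exist, the vertices $3,4,5$ would each have to lie in one of the remaining two parts $C$ or $D$, but the edge $345$ requires three distinct parts, which is impossible. Induced copies of $K_4$ in $G_n$ correspond exactly to rainbow transversals of the four parts, giving $K_4$-density $(n/4)^4 \big/ \binom{n}{4} \to 24/4^4 = 3/32$.

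For the upper bound, I would apply Razborov's semi-definite method as used throughout the paper. The starting heuristic observation is that $K_5$ itself contains $F_{3,2}$ (pick any two vertices of $K_5$; the other three form the required ``independent'' triangle), so $F_{3,2}$-freeness forces clique number at most $4$, which explains why a blow-up of $K_4$ is the natural extremal candidate. To set up the SDP, I would choose a flag admissibility parameter $N$ (likely $N = 7$ or $8$), enumerate all $F_{3,2}$-free $3$-graphs on $N$ vertices as flags, and for each labelled type $\tau$ enumerate the $\tau$-flags of size $(N + |\tau|)/2$. The goal is to exhibit positive semi-definite matrices $Q_\tau$ which, via the standard flag-algebra identity, certify the inequality $d_{K_4}(G) \le 3/32$ for every sufficiently large $F_{3,2}$-free $3$-graph $G$.

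The main obstacle is turning a numerical SDP solution into an exact rational certificate matching $3/32$ on the nose. My strategy, following the methodology of~\cite{FRV11}, would be to run Flagmatic to obtain a numerical feasible solution, then for each $Q_\tau$ identify the zero eigenvectors indexed by flags that embed into the extremal blow-up of $K_4$; projecting onto the orthogonal complement of these eigenvectors enforces the expected extremal structure and typically makes rational rounding possible. If the rounded matrices remain positive semi-definite and the resulting flag-algebra inequality still yields exactly $3/32$, the proof is complete, and correctness of the rounded certificate (\texttt{f32max44.js}) can be confirmed by a finite rational computation using the \texttt{inspect\_certificate.py} tool.
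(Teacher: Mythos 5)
Your proposal matches the paper's approach exactly: the lower bound is the balanced blow-up of $K_4$ (your $F_{3,2}$-freeness check and density computation $24/4^4 = 3/32$ are both correct), and the upper bound is obtained from Razborov's semi-definite method via a Flagmatic certificate, precisely as the paper does with the certificate \texttt{f32max44.js}. Your added heuristic that $K_5 \supseteq F_{3,2}$ forces clique number at most $4$ is a nice motivation, but does not change the route.
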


\begin{proof} The upper bound is from a flag algebra calculation using Flagmatic (see Section~\ref{flagmaticsec} for how to obtain a certificate). The lower bound is from a balanced blow-up of $K_4$.
\end{proof}

Thus we are left with two $3$-graphs on $4$ vertices whose density in $F_{3,2}$-free $3$-graphs we would like to maximise. However, we have been unable to obtain sharp results:

\begin{proposition}\label{4.2nof32}
\[
\begin{array}{rcccl}
4/9 & \leq & \pi_{4.1}(F_{3,2}) & < & 0.514719,\\
9/16 & \leq & \pi_{4.2}(F_{3,2}) & < & 0.627732.
\end{array}
\]
\end{proposition}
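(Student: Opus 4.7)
Following the pattern of the preceding propositions, both upper bounds will be obtained by applying the semi-definite method via Flagmatic as a black box, with certificates \verb|f32max42.js| and \verb|f32max41.js| recording the calculations. What requires more thought is the choice of matching lower-bound constructions, since the 4.1 case does not admit an obvious candidate.

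For $\pi_{4.2}(F_{3,2})\geq 9/16$ I propose a balanced blow-up of $K_4$, by analogy with the $C_5$-free extremal of Theorem~\ref{4.2noc5f32}. $F_{3,2}$-freeness follows from a one-line pigeonhole: the three edges sharing the pair $\{1,2\}$ in a copy of $F_{3,2}$ force the vertices $3,4,5$ to lie in the two parts of $K_4$ avoiding $\{1,2\}$, so two of them collide in a single part and the putative edge $\{3,4,5\}$ spans at most two parts. A routine multinomial tally then shows that only 4-sets with composition $(2,1,1,0)$ across the four parts span exactly two edges, yielding asymptotic density $9/16$.

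For the 4.1 case, the F\"uredi--Pikhurko--Simonovits extremal construction and its iterated cousins all give 4.1-density zero or very small, so one must look elsewhere. The key observation is that removing the transversal edge $123$ from Tur\'an's construction $([3],\{112,223,331,123\})$ yields the cyclic degenerate 3-graph $T=([3],\{112,223,331\})$, whose (non-iterated) balanced blow-up has a 4-set of composition $(2,1,1)$ spanning exactly one edge, namely the unique $iij$-triple matching a cyclic arc $i\to j$ of $T$. All other 4-vertex compositions contribute $0$, $2$ or $3$ edges (in particular $4$ is never attained), so summing the three cyclic rotations of $(2,1,1)$ gives an asymptotic 4.1-density of $4/9$.

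The nontrivial point is verifying that the $T$-blow-up is $F_{3,2}$-free, and this is where the main work lies. I would proceed by a short case analysis on the distribution of the five vertices of an alleged copy of $F_{3,2}$ across the three parts: since every edge of the non-iterated blow-up uses exactly two parts with a prescribed cyclic orientation, the shared pair $\{1,2\}$ of $F_{3,2}$ heavily constrains where the remaining three vertices can lie, and in every distribution the candidate disjoint edge $\{3,4,5\}$ ends up either contained in a single part (hence not an edge, since within-part triples are independent in the non-iterated blow-up) or oriented against the cyclic direction. Notably the \emph{iterated} version of this construction is \emph{not} $F_{3,2}$-free---an inner edge inside a part housing four of the five vertices can supply the missing disjoint triple---which both spoils the obvious route to a stronger lower bound and presumably accounts for the residual gap left by the semi-definite upper bound.
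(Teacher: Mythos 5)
Your proposal is correct and follows the same approach as the paper: the upper bounds come from the two Flagmatic certificates, and the lower bounds are obtained from balanced blow-ups of $([3],\{112,223,331\})$ and of $K_4$ respectively. Your verification that the blow-up of $K_4$ is $F_{3,2}$-free by pigeonhole, and your case analysis showing that $\{3,4,5\}$ must collapse into a single part in the blow-up of $T$ (the ``oriented against the cyclic direction'' alternative in fact never occurs, since for any ordered pair of parts exactly one orientation is valid and a contradiction arises before one can invoke it), together with the multinomial density counts giving $9/16$ and $4/9$, correctly fill in the details the paper leaves implicit; your observation that the iterated blow-up of $T$ would not be $F_{3,2}$-free is also accurate.
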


\begin{proof} The upper bounds are from flag algebra calculations using Flagmatic (see Section~\ref{flagmaticsec} for how to obtain a certificate). The lower bounds are from balanced blow-ups of $([3], \{112,223,331\})$  and $K_4$ respectively.
\end{proof}

\subsection{Inducibility}

In this subsection, we study $\pi_{H}(\emptyset)$ for small $3$-graphs $H$. The quantity $\pi_{H}(\emptyset)$ is often called the \emph{inducibility} of $H$. Let $\bar G$ denote the \emph{complement} of a 3-graph $G$; that is, the graph containing all edges not present in $G$. A graph $G$ is said to be \emph{self-complementary} if $G$ and $\bar G$ are isomorphic.

It is easy to see that the $H$-density of a $3$-graph $G$ is equal to the $\bar H$-density of $\bar G$. Two immediate consequences of this are:

\begin{lemma} \label{complementarity}
For any $3$-graph $H$,
\[\pi_H(\emptyset) = \pi_{\bar H}(\emptyset).\]
\end{lemma}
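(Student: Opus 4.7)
The plan is to establish the identity $e_H(G) = e_{\bar H}(\bar G)$ for every $3$-graph $G$ on $n \geq h = |V(H)|$ vertices, and then deduce the lemma by taking maxima and passing to the limit. The argument is essentially a packaging of the sentence immediately preceding the statement.

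First I would observe that complementation commutes with taking induced subgraphs: for any subset $S \subseteq V(G)$ of size $h$, the induced $3$-graph $\bar G[S]$ is precisely the complement of $G[S]$ on the vertex set $S$. In particular, $G[S] \cong H$ if and only if $\bar G[S] \cong \bar H$. Summing over $h$-subsets $S$ of $V(G) = V(\bar G)$ gives $e_H(G) = e_{\bar H}(\bar G)$.

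Next, since the map $G \mapsto \bar G$ is an involution on the set of $3$-graphs with a fixed $n$-element vertex set, and since the empty family of forbidden subgraphs imposes no constraint, taking the maximum over all such $G$ yields
\[
\ex_H(n, \emptyset) = \max_G e_H(G) = \max_G e_{\bar H}(\bar G) = \ex_{\bar H}(n, \emptyset).
\]
Dividing both sides by $\binom{n}{h}$ and letting $n \to \infty$, the existence of both limits being guaranteed by Proposition~\ref{averaging}, I obtain $\pi_H(\emptyset) = \pi_{\bar H}(\emptyset)$.

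There is no real obstacle here: the only thing to verify carefully is the bijective correspondence between induced copies of $H$ in $G$ and induced copies of $\bar H$ in $\bar G$, which is immediate from the definition of induced subgraph complementation. The argument would clearly fail if the family of forbidden subgraphs were nonempty, since an $\mathcal{F}$-free graph $G$ need not have $\mathcal{F}$-free complement; this is precisely why the lemma is stated only for the inducibility case $\mathcal{F} = \emptyset$.
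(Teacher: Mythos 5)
Your argument is exactly the one the paper has in mind: the preceding sentence in the paper observes that the $H$-density of $G$ equals the $\bar H$-density of $\bar G$, and the lemma is stated as an immediate consequence, which you have simply written out in full. Correct and same approach.
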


\begin{lemma}
If $H$ is self-complementary, then either there are either at least two extremal constructions, or the extremal construction is itself self-complementary.
\end{lemma}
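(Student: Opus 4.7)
The plan is to leverage Lemma~\ref{complementarity} together with the observation, already noted in the paragraph preceding it, that for any $3$-graph $G$ the $H$-density of $G$ equals the $\bar{H}$-density of $\bar{G}$. The argument is essentially a single line of symmetry.

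Concretely, I would start by fixing an extremal construction $G$ for $\pi_H(\emptyset)$ (understood either as a sequence $(G_n)$ of $n$-vertex $3$-graphs whose $H$-density tends to $\pi_H(\emptyset)$, or as an explicit limit object). Applying the complementation observation, the $\bar{H}$-density of $\bar{G}$ equals the $H$-density of $G$, which by assumption equals $\pi_H(\emptyset) = \pi_{\bar H}(\emptyset)$, so $\bar{G}$ is extremal for the $\bar{H}$-inducibility problem. Because $H \cong \bar{H}$, the $\bar{H}$-density of any $3$-graph coincides with its $H$-density, so $\bar{G}$ is in fact also extremal for $\pi_H(\emptyset)$.

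The conclusion then follows from a simple dichotomy: either $G \cong \bar{G}$, in which case the extremal construction $G$ is itself self-complementary, or $G \not\cong \bar{G}$, in which case $G$ and $\bar{G}$ furnish two non-isomorphic extremal constructions.

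There is no real obstacle: the only care needed is a cosmetic choice of formulation, since inducibilities are defined via a limit, so ``extremal construction'' should be interpreted either as an extremal sequence (in which case one takes complements term-by-term) or as a limiting object in the flag-algebra sense; either convention makes the above chain of equalities exact rather than asymptotic, and the dichotomy argument works unchanged.
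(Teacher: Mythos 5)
Your argument is correct and is exactly the one the paper has in mind: the lemma is presented there without explicit proof, as an ``immediate consequence'' of the observation that the $H$-density of $G$ equals the $\bar H$-density of $\bar G$, and your complement-the-extremal-construction-then-dichotomise argument is precisely that immediate consequence spelled out.
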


We first study $\pi_{H}(\emptyset)$ for the $3$-graphs $H$ with $\vert V(H) \vert = 4$. Clearly we have $\pi_{K_4}(\emptyset)=\pi_{\bar K_4}(\emptyset)=1$, so this leaves us only two values to determine, $\pi_{4.2}(\emptyset)$ and $\pi_{K_4^-}(\emptyset)$ (which by Lemma~\ref{complementarity} is the same as $\pi_{4.1}(\emptyset)$).

\begin{theorem} \label{4.2}
\[\pi_{4.2}(\emptyset) = 3/4.\]
\end{theorem}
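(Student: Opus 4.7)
The plan is to prove matching upper and lower bounds on $\pi_{4.2}(\emptyset)$.

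For the upper bound $\pi_{4.2}(\emptyset) \le 3/4$, I would apply Razborov's semi-definite method via Flagmatic, as promised by Section~\ref{flagmaticsec}. Concretely, I would enumerate all 3-graphs on a small number of vertices (typically 6 or 7), set up the associated flag-algebra SDP with the $4.2$-density as the linear objective, and let a numerical SDP solver produce a positive semi-definite matrix certifying the upper bound. The resulting sum-of-squares certificate (\verb|max42.js|) then yields $d_{4.2}(G) \le 3/4 + o(1)$ for every 3-graph $G$ on $n$ vertices; the work mostly consists of formulating the problem for Flagmatic and of extracting and verifying the output.

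For the lower bound $\pi_{4.2}(\emptyset) \ge 3/4$, I need to exhibit a sequence of 3-graphs whose asymptotic $4.2$-density tends to $3/4$. The simple natural candidates all fall well short of $3/4$: for the random 3-graph $G(n,1/2)$ the density of $4$-sets with exactly two edges is $\binom{4}{2}2^{-4}=3/8$; a balanced blow-up of $K_4$ with edges on triples hitting three distinct parts gives $9/16$; the analogous $K_5$-construction pushes this to $72/125 \approx 0.576$, and iterated blow-ups or random multipartite constructions with other symmetric rules peak at comparable values (e.g.\ $4/7$ or $4/9$). None of these reaches $3/4$. The plan is therefore, following the hint in the introduction, to introduce a random geometric construction: place $n$ vertices at iid random points in a well-chosen geometric space (the interval $[0,1]$, the circle $S^1$, or a higher-dimensional sphere), and define a 3-edge by a geometric predicate on the positions of the triple. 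One then computes the asymptotic probability that four iid random points induce $4.2$ under this edge rule, using tools such as Wendel-style hemisphere arguments for spherical vertices or order-statistics calculations for interval vertices, and shows that this limiting probability equals $3/4$.

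The main obstacle is identifying the correct random geometric construction. The key point is that the four triples in a $4$-set share pairs of vertices, so their edge-indicators are strongly correlated; a good construction must exploit this correlation structure to concentrate the number of edges in a random $4$-set at exactly $2$. All of the product-style constructions above fail to do so by around $0.17$, so the construction must be genuinely geometric and ``strikingly different'' from blow-ups, as the authors advertise. Once the construction is in hand, the density calculation should be a direct application of geometric probability, but finding the right construction and checking that its $4.2$-density is exactly $3/4$ (and matches the Flagmatic upper bound) is the heart of the proof.
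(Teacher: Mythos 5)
Your upper-bound plan matches the paper exactly: the bound $\pi_{4.2}(\emptyset)\le 3/4$ is a Flagmatic computation with certificate \verb|max42.js|, and there is nothing more to say there.

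For the lower bound, however, you have correctly diagnosed the problem but not solved it, and the direction you gesture at is not the one that works. You propose placing $n$ vertices at i.i.d.\ random points and then defining the $3$-edges by a \emph{deterministic} geometric predicate on each triple. That is essentially the Frankl--F\"uredi paradigm (points on a circle, a triple is an edge iff its triangle contains the origin), and as you note yourself such constructions stall around $1/2$. The paper's construction inverts this: the vertex positions are \emph{deterministic} (equally spaced on the unit circle), and the randomness lives on the plane regions, not on the points. Concretely, the $\binom{n}{2}$ chords cut the disc into polygonal cells; each cell independently receives a uniform bit; and $xyz$ is declared an edge iff the XOR of the bits over the cells lying inside triangle $xyz$ is $1$. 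The two facts that drive the calculation are: (i) any fixed nonempty collection of cells has odd parity with probability exactly $1/2$, and (ii) disjoint collections of cells have independent parities. Given a $4$-set $\{a,b,c,d\}$ in convex position with diagonal intersection $e$, the four triangles $R_1=abe,\,R_2=bce,\,R_3=cde,\,R_4=ade$ are pairwise disjoint, so their parities $\varepsilon_1,\dots,\varepsilon_4$ are four independent fair bits; each of the four triples corresponds to the union of two adjacent $R_i$'s, so the edge indicators are $\varepsilon_1\oplus\varepsilon_2,\ \varepsilon_2\oplus\varepsilon_3,\ \varepsilon_3\oplus\varepsilon_4,\ \varepsilon_4\oplus\varepsilon_1$. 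A short case check (odd parity count $1$ or $3$: two edges, probability $1/2$; exactly two odd and adjacent: two edges, probability $1/4$; exactly two odd and opposite: four edges; zero or four odd: no edges) gives $\Pr[\text{exactly two edges}]=3/4$ for every $4$-set, so some outcome achieves $4.2$-density at least $3/4$. This parity-on-regions device is the real content of the theorem and is precisely the correlation-exploiting idea you say you are looking for; without it your proposal has a gap at the heart of the lower bound.
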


\begin{proof} The upper bound is from a flag algebra calculation using Flagmatic (see Section~\ref{flagmaticsec} for how to obtain a certificate). The lower bound is from the following random geometric construction.

First of all, place $n$ vertices on the boundary of the unit disc, spaced at equal intervals. Each pair of vertices $(x,y)$ defines a chord of the unit circle. Consider the division of the unit disc into polygonal regions given by these chords. We independently assign each region a value $0$ or $1$ with equal probability. Then, for each triple of vertices $(x,y,z)$, we add the 3-edge $xyz$ if and only if the sum of the values of the regions contained inside the triangle $xyz$ is odd. This gives us our construction.

We shall now prove that with positive probability, at least $3/4$ of the 4-sets of vertices induce the graph $4.2$. Let us begin with two observations.

First of all, let $R$ be any collection of regions. Then the probability that the sum of their values is odd is exactly $1/2$. (So in particular, our construction has $3$-edge density $1/2$.) Second, if $R$ and $R'$ are two disjoint collections of regions, the parity of the sum of the values of the regions in $R$ is independent from the parity of the sum of the values of the regions in $R'$.

From now on, let us speak of the parity of a collection of regions as a shorthand for the parity of the sum of the values of the regions it contains. Consider a 4-set of vertices $S=\{a,b,c,d\}$. We may assume without loss of generality that when traversing the unit circle clockwise from $a$, the vertices $b$, $c$ and $d$ are met in that order, as depicted in Figure~\ref{quadrilateral}. So $a,b,c,d$ are the vertices of a convex quadrilateral. Let $e$ be the intersection point of the diagonals $ac$ and $bd$, and let $R_1$, $R_2$, $R_3$ and $R_4$ denote the triangles
$abe$, $bce$, $cde$ and $ade$.

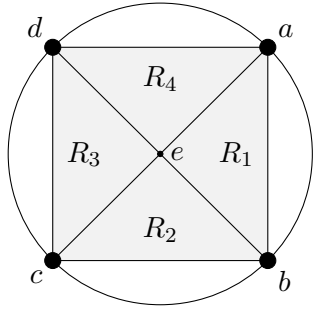
\begin{figure}
\begin{center} \begin{tikzpicture}
\fill[gray!10] (135:2)--(225:2)--(315:2)--(45:2)--cycle;
\draw (0,0) circle (2) (315:2)--(225:2)--(135:2)--(45:2)--(315:2)--(135:2)
(45:2)--(225:2);
\draw[fill=black] (45:2) circle (3pt) node[above right] {$a$}
(315:2) circle (3pt) node[below right] {$b$}
(225:2) circle (3pt) node[below left] {$c$}
(135:2) circle (3pt) node[above left] {$d$}
(0, 0) circle (1pt) node[right] {$e$}
(180:1) node {$R_3$} (90:1) node {$R_4$} (0:1) node {$R_1$} (270:1) node {$R_2$};
\end{tikzpicture} \end{center}
\caption{From the proof of Theorem~\ref{4.2}.} \label{quadrilateral}
\end{figure}

Now, if zero or four of the $R_i$ have odd parity, then the $4$-set $S=\{a,b,c,d\}$ spans no edges in our construction. If one or three of the $R_i$ has odd parity, then $S$ spans two edges; this happens with probability $1/2$. If two of the $R_i$ have odd parity, there are two cases to consider: either the two $R_i$ with odd parity are adjacent to each other---i.e{.} their boundaries intersect in a nontrivial line segment---in which case $S$ spans two edges, or they are opposite one another, in which case $S$ spans four edges. The former case occurs with probability $1/4$. 

Therefore the probability that $S$ spans two edges is $3/4$. Since the choice of $S$ was arbitrary, it follows that with positive probability our construction gives a $4.2$-density of at least $3/4$, whence we are done.
\end{proof}

We note that the lower bound construction is quite different from previously known $3$-graphs constructions. Of those that have appeared in the literature, it resembles most the geometric construction of Frankl and F\"uredi~\cite{FF84}, which it in some sense generalises. This construction also features vertices on the unit circle, where $3$-edges are added whenever the corresponding triangle contains the origin in its interior. (Incidentally, this construction has a $4.2$-density of $1/2$.)

Let us now consider the inducibility of $K_4^-$. Here by contrast we do not believe we have a good lower bound. We get a similar upper bound if we forbid $4$-sets of vertices from spanning exactly one edge.

\begin{proposition}\label{k4-inducibility}
\[0.592592 < 16/27 \leq \pi_{K_4^-}(\emptyset) < 0.651912.\]
Also,
\[16/27 \leq \pi_{K_4^-}(\Text{induced } 4.1) \leq 0.650930.\]
\end{proposition}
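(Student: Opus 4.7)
The plan is to obtain the lower bounds from a single construction, and the upper bounds as immediate consequences of flag algebra calculations run through Flagmatic with the certificates \verb|max43.js| and \verb|41max43.js|.

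For the lower bounds, I would reuse Tur\'an's construction $T_n$, the balanced blow-up of $([3], \{112, 223, 331, 123\})$, which appears as the extremal example in Theorem~\ref{k4-nok4}. The asymptotic $K_4^-$-density of $T_n$ is $16/27$, as stated there; this immediately gives $\pi_{K_4^-}(\emptyset) \geq \pi_{K_4^-}(K_4) = 16/27$ since any $K_4$-free construction is \emph{a fortiori} an admissible construction for the unrestricted inducibility problem. To deduce the second lower bound it suffices to verify that $T_n$ is also free of induced copies of $4.1$. This is the essential feature of Tur\'an's construction exploited by Razborov~\cite{R10}: every $4$-set in $T_n$ spans either $0$, $2$, $3$, or $4$ edges (depending on how the four vertices are distributed amongst the three parts and on the edges of $([3], \{112, 223, 331, 123\})$ they span), so no $4$-set spans exactly one edge. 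Hence $T_n$ is admissible for the induced $4.1$-free problem as well, yielding $\pi_{K_4^-}(\Text{induced } 4.1) \geq 16/27$.

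For the upper bounds, I would simply invoke the semi-definite method as a black box, exactly as in the proofs of the other propositions in the paper. One runs Flagmatic on the appropriate flag algebra truncation (at order $6$ or $7$, say), optimising over positive semi-definite matrices indexed by labelled flags to produce a sum-of-squares certificate bounding the $K_4^-$-density. For $\pi_{K_4^-}(\emptyset)$ the optimisation is unconstrained, giving the certificate \verb|max43.js| and the numerical bound $0.651912$; for $\pi_{K_4^-}(\Text{induced } 4.1)$ one adds the constraint that the density of $4.1$ is zero, giving \verb|41max43.js| and the slightly better bound $0.650930$. The correctness of both bounds can be independently checked with the \verb|inspect_certificate.py| script described in Section~\ref{flagmaticsec}.

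The main obstacle here is not the execution of either step, but rather the \emph{gap} between $16/27 \approx 0.5926$ and the flag algebra upper bounds around $0.65$. I would not expect to be able to close it by the same techniques as elsewhere in the paper: the standard trick of forbidding $F_{3,2}$ in order to rule out iterated blow-ups is of no use here (Tur\'an's construction already has independent neighbourhoods), and the natural further restriction of forbidding induced $4.1$ only shaves a thousandth off the upper bound. This suggests either that Tur\'an's construction is genuinely not the inducibility extremiser for $K_4^-$, or that a substantially more refined flag algebra computation (with much larger flag order, or with additional structural assumptions derived from a presumed extremal configuration) would be required. Since closing the gap is not the goal of the proposition, we content ourselves with recording the bounds as they stand.
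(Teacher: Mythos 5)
Your proposal matches the paper's proof: the lower bounds in both cases come from Tur\'an's construction (the balanced blow-up of $([3],\{112,223,331,123\})$, which has $K_4^-$-density $16/27$ and is free of induced $4.1$), and the upper bounds are flag algebra bounds computed by Flagmatic with the certificates \verb|max43.js| and \verb|41max43.js|. Your added verification that every $4$-set of Tur\'an's construction spans $0$, $2$, or $3$ edges (never exactly one) is correct and makes the applicability of the construction to the second bound explicit.
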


\begin{proof} The upper bounds are from flag algebra calculations using Flagmatic (see Section~\ref{flagmaticsec} for how to obtain a certificate). The lower bound in both cases is from Tur\'an's construction: a balanced blow-up of $([3], \{123,112,223,331\})$.
\end{proof}

It seems likely that both $\pi_{K_4^-}(\emptyset)$ and $\pi_{K_4^-}(\Text{induced } 4.1)$ take values close to $0.65$. Since Tur\'an's construction has no induced copies of $4.1$ and is (by Theorem~\ref{k4-nok4}) a $K_4$-free $3$-graph maximising the $K_4^-$-density, this would indicate that the actual extremal construction(s) for the inducibility of $K_4^-$ have strictly positive $K_4$-density.

Turning to $5$-vertex graphs, we are able to obtain a few more exact results.

\begin{theorem}\label{5.6}
\[\pi_{5.4}(\emptyset)=\pi_{5.6}(\emptyset)= 20/27.\]
\end{theorem}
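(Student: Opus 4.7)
The plan is to reduce the two equalities to one via complementarity, then establish the common value $20/27$ through a standard Flagmatic upper bound and an elementary lower-bound computation in the claimed extremal construction.

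Since a $3$-graph on $5$ vertices has $\binom{5}{3}=10$ potential edges, complementation exchanges the families $5.4$ and $5.6$: a $5$-set $S$ of $V(G)$ induces a graph with exactly $6$ edges in $G$ if and only if it induces a graph with exactly $4$ edges in $\bar{G}$. Applying Lemma~\ref{complementarity} isomorphism class by isomorphism class then yields $\pi_{5.4}(\emptyset)=\pi_{5.6}(\emptyset)$, so it suffices to compute one of the two. I would work with $\pi_{5.6}(\emptyset)$.

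For the upper bound $\pi_{5.6}(\emptyset)\le 20/27$, I would invoke the semi-definite method via Flagmatic (certificate \verb|max56.js|), exactly as in the other theorems of the paper. For the lower bound, let $G$ be a balanced blow-up of $H=([3],\{112,221,223,332,113,331\})$ with parts $A,B,C$ of size $n/3$. The non-edges of $G$ are precisely the triples whose three vertices either all lie in the same part or are split one into each part. Hence a $5$-set of type $(a,b,c)$ (meaning $a,b,c$ vertices in parts $A,B,C$) spans
\[E(a,b,c)=\binom{a}{2}(b+c)+\binom{b}{2}(a+c)+\binom{c}{2}(a+b)\]
edges. A short case check over compositions of $5$ into three non-negative parts shows $E(a,b,c)=6$ precisely when $(a,b,c)$ is a permutation of $(4,1,0)$, $(3,1,1)$, or $(2,2,1)$. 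Summing the asymptotic contributions
\[6\binom{n/3}{4}(n/3)+3\binom{n/3}{3}(n/3)^2+3\binom{n/3}{2}^2(n/3)\sim \tfrac{3}{2}\left(n/3\right)^5,\]
and dividing by $\binom{n}{5}\sim n^5/120$ gives the claimed limit $20/27$.

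The main obstacle is, as usual, the flag algebra upper bound: the semi-definite programme cannot be carried out by hand, and correctness rests on verifying the machine-generated certificate. The complementarity reduction and the lower-bound arithmetic are both entirely routine once the correct enumeration of $5$-set types is carried out.
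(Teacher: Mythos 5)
Your proposal is correct and takes essentially the same approach as the paper: Flagmatic certificate (\verb|max56.js|) for the upper bound, balanced blow-up of $([3],\{112,221,223,332,113,331\})$ for the lower bound, with complementarity identifying $\pi_{5.4}(\emptyset)$ and $\pi_{5.6}(\emptyset)$. You helpfully spell out the lower-bound arithmetic (the types $(4,1,0)$, $(3,1,1)$, $(2,2,1)$ and the asymptotic count $\tfrac{3}{2}(n/3)^5$), which the paper leaves implicit; the only minor imprecision is the phrase ``applying Lemma~\ref{complementarity} isomorphism class by isomorphism class,'' since the correct justification is the single observation you already state --- that a $5$-set spans $6$ edges in $G$ iff it spans $4$ in $\bar G$, so $\ex_{5.6}(n,\emptyset)=\ex_{5.4}(n,\emptyset)$ --- rather than summing per-isomorphism-class maxima which are attained by different graphs.
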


\begin{proof} The upper bound is from a flag algebra calculation using Flagmatic (see Section~\ref{flagmaticsec} for how to obtain a certificate). The lower bound (for $5.6$) is from a balanced blow-up of $([3], \{112, 221, 223, 332, 113, 331\})$. (This is just a balanced tripartition with all $3$-edges meeting a part in two vertices exactly.) 
\end{proof}

\begin{theorem}\label{5.7}
\[\pi_{5.3}(\emptyset) =\pi_{5.7}(\emptyset)= 20/27.\]
\end{theorem}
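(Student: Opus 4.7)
The plan is to mirror the approach used for Theorem~\ref{5.6}, exploiting complementation to reduce the two equalities to one. A 5-vertex 3-graph has $\binom{5}{3}=10$ possible triples, so the 5-vertex 3-graphs with exactly 3 edges are precisely the complements of those with 7 edges. Lemma~\ref{complementarity}, although stated for a single 3-graph $H$, extends immediately to any family closed under complementation (since $e_H(G)=e_{\bar H}(\bar G)$ for every 3-graph $G$), and so it yields $\pi_{5.3}(\emptyset)=\pi_{5.7}(\emptyset)$ at once. It therefore suffices to establish $\pi_{5.7}(\emptyset)=20/27$.

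For the upper bound I would invoke Flagmatic as a black box, obtaining the inequality $\pi_{5.7}(\emptyset) \le 20/27$ from the certificate \texttt{max57.js} via the associated inspection script.

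For the lower bound I would consider the balanced blow-up of the degenerate 3-graph $T=([3], \{111,222,333,112,223,331,123\})$, namely Tur\'an's construction with a loop added at each vertex. As $n\to\infty$, the distribution $(a,b,c)$ of a uniformly random 5-set across the three parts is asymptotically multinomial with parameters $(5;\tfrac13,\tfrac13,\tfrac13)$. For each type $(a,b,c)$ with $a+b+c=5$ I would enumerate the ten 3-subsets of the 5-set by their own three-part distribution and count how many fall in the seven edge-classes of $T$. A direct case check on the nine types (up to cyclic rotation) shows that the 5-sets inducing exactly seven edges are precisely those whose type is a cyclic shift of $(3,2,0)$, $(3,1,1)$, or $(2,2,1)$. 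Summing the corresponding multinomial weights gives
\[
\frac{3\binom{5}{3,2,0}+3\binom{5}{3,1,1}+3\binom{5}{2,2,1}}{3^5}=\frac{30+60+90}{243}=\frac{20}{27},
\]
as required.

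The only genuine obstacle is the upper bound, which, as with all the inducibility results in this section, rests on the semi-definite method and is not amenable to a short by-hand argument; the rest of the proof is routine combinatorial bookkeeping.
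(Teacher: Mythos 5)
Your proposal is correct and follows the same approach as the paper: Flagmatic for the upper bound and a balanced blow-up of $([3],\{111,222,333,112,223,331,123\})$ for the lower bound, with $\pi_{5.3}(\emptyset)=\pi_{5.7}(\emptyset)$ following from complementation ($5.3$ and $5.7$ being mutually complementary families); you additionally spell out the lower-bound density computation, which the paper leaves implicit, and that computation checks out. (Minor slip: there are seven, not nine, cyclic classes of compositions of $5$ into three nonnegative parts.)
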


\begin{proof} The upper bound is from a flag algebra calculation using Flagmatic (see Section~\ref{flagmaticsec} for how to obtain a certificate). The lower bound (for $5.7$) is from a balanced blow-up of $([3], \{111, 222, 333, 123, 112, 223, 331\})$. (This is just Tur\'an's construction with all three parts made complete.)
\end{proof}

\begin{theorem}\label{5.9}
\[\pi_{5.1}(\emptyset)=\pi_{5.9}(\emptyset)=5/8.\]
\end{theorem}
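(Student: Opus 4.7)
The plan is to first use complementarity to reduce the two density problems to one, then bound the remaining density from below via explicit construction and from above via the semi-definite method. Since $5.1$ has one edge out of $\binom{5}{3}=10$ and $5.9$ has nine edges, these are complementary $3$-graphs on $5$ vertices, so Lemma~\ref{complementarity} gives $\pi_{5.1}(\emptyset)=\pi_{5.9}(\emptyset)$, and it suffices to pin this common value at $5/8$.

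For the lower bound, I would take the balanced complete bipartite $3$-graph $G$ on $n$ vertices with parts $A$ and $B$ of size $n/2$, and edge set equal to all $3$-subsets of $V(G)$ that meet both parts. Given a $5$-set $S\subseteq V(G)$ with $a$ vertices in $A$, the non-edges of $G$ inside $S$ are precisely the $3$-subsets contained in a single part, giving $\binom{a}{3}+\binom{5-a}{3}$ non-edges. Requiring this to equal $1$ forces $\{a,5-a\}=\{2,3\}$, so there are exactly $2\binom{n/2}{2}\binom{n/2}{3}$ induced copies of $5.9$. Dividing by $\binom{n}{5}$ and letting $n\to\infty$ produces the asymptotic density
\[\frac{2\binom{n/2}{2}\binom{n/2}{3}}{\binom{n}{5}} \longrightarrow \frac{2\cdot 120}{8\cdot 48}=\frac{5}{8}.\]

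The substantive part is the matching upper bound, which I would establish by applying Razborov's semi-definite method in the unrestricted universe of $3$-graphs. Concretely, I would enumerate all $3$-graphs on $N$ vertices (likely $N=7$), fix a suitable collection of types and typed flags, set up the semi-definite program whose positive-semi-definite constraints encode sums of squares of linear combinations of typed flags, solve it numerically, and then round the solution to a rational certificate proving $\pi_{5.9}(\emptyset)\leq 5/8$. This is precisely what Flagmatic automates, with output recorded in \verb|max59.js|.

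The main obstacle, as is typical for semi-definite method proofs, is producing a rational rounding of the SDP solution that matches the lower bound $5/8$ exactly rather than only approximately. The heuristic discussion in Section~\ref{flagmaticsec} and in~\cite{FRV11} suggests this should succeed here, since the extremal construction is a non-iterative blow-up of a small degenerate $3$-graph rather than an iterated blow-up; problems of the former flavour are consistently tractable for the method, while iterated-blow-up extremal structures tend to leave a small but stubborn gap.
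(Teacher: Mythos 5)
Your proposal matches the paper's proof: the lower bound comes from the balanced complete bipartite $3$-graph, with the density calculation you spell out giving exactly $5/8$, and the upper bound is obtained by a flag-algebra (semi-definite) computation via Flagmatic whose certificate is \verb|max59.js|. The explicit appeal to Lemma~\ref{complementarity} is implicit in the paper's statement that both $\pi_{5.1}(\emptyset)$ and $\pi_{5.9}(\emptyset)$ equal $5/8$, but this is the same reasoning.
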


\begin{proof} The upper bound is from a flag algebra calculation using Flagmatic (see Section~\ref{flagmaticsec} for how to obtain a certificate). The lower bound is obtained by taking a complete balanced bipartite $3$-graph.
\end{proof}

In the forthcoming note~\cite{FRV12}, we prove that the complete balanced bipartite $3$-graph is in fact the stable extremum for the inducibility of $5.9$. This relates Theorem~\ref{5.9} to a conjecture of Tur\'an on the Tur\'an density of $K_5$, the complete $3$-graph on $5$ vertices.

\begin{conjecture}[Tur\'an]\label{k5conj}
\[\pi(K_5)=3/4.\]
\end{conjecture}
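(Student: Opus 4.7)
The plan is to split the theorem into its two equalities and exploit the complementarity relation. By Lemma~\ref{complementarity}, $\pi_{5.1}(\emptyset)=\pi_{5.9}(\emptyset)$, since the $3$-graph $5.9$ is the complement of $5.1$ (the total number of $3$-edges on $5$ vertices is $\binom{5}{3}=10$). So it is enough to pin down $\pi_{5.9}(\emptyset)=5/8$.

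For the lower bound I would analyse the complete balanced bipartite $3$-graph $B(n)$ on parts $A,B$ of size $n/2$, whose edges are precisely the triples meeting both parts. For a $5$-set with $a$ vertices in $A$ and $5-a$ in $B$, the induced edge count is
\[
\binom{5}{3}-\binom{a}{3}-\binom{5-a}{3},
\]
which takes the value $9$ exactly when $\{a,5-a\}=\{2,3\}$, the value $6$ when $\{a,5-a\}=\{1,4\}$, and $0$ otherwise. Averaging over uniformly random $5$-sets, the proportion of $(2,3)$-splits tends to $2\binom{5}{2}/2^{5}=5/8$, giving the required lower bound. (Note that a $(2,3)$-split induces a copy of $5.9$ in which the unique non-edge is the triple contained entirely in the part of size three, so every such $5$-set is indeed a copy of $5.9$.)

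The upper bound is the genuinely hard direction, and I would attack it via the semi-definite method of Razborov as packaged by Flagmatic, following the template laid out in Section~\ref{flagmaticsec}. Concretely, one enumerates all $3$-graphs on $N$ vertices (for $N=7$ or $N=8$, whichever turns out to be needed), picks an appropriate collection of types (labelled partial $3$-graphs) and corresponding flags, and searches numerically for positive semi-definite matrices $M_\tau$ so that the sum of squares $\sum_\tau \llbracket q_\tau^\top M_\tau q_\tau \rrbracket$ certifies the inequality $p_{5.9}(G)\le 5/8+o(1)$ for every large $3$-graph $G$. Since the extremal construction is strikingly structured (two parts, both independent), I would expect the type structure to reflect this bipartition, producing a rank-$2$ extremal eigenstructure for each type's PSD matrix.

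The main obstacle is the familiar one with the semi-definite method: choosing $N$ and the type/flag data so that the dual SDP has optimal value exactly $5/8$ (rather than a slightly larger value that would leave a gap). Because $B(n)$ is $F_{3,2}$-free and $K_4$-free, one heuristically expects the method to behave well here, in contrast to the iterated blow-up problems of Sections 2.2--2.4; so I would be optimistic that a standard Flagmatic run at modest $N$ will produce a certificate that can then be rounded to exact rationals and verified independently with \verb|inspect_certificate.py|.
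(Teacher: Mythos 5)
You have addressed the wrong statement. The item labelled Conjecture~\ref{k5conj} is Tur\'an's long-open conjecture that $\pi(K_5)=3/4$, i.e.\ that a $K_5$-free $3$-graph on $n$ vertices has at most $(3/4+o(1))\binom{n}{3}$ edges. The paper does not prove this --- it presents it explicitly as an open conjecture, notes that many non-isomorphic constructions attain the lower bound $3/4$ (citing Sidorenko's survey), and merely remarks that the adjacent Theorem~\ref{5.9} is ``related'' to it. Your proposal is instead a proof sketch of Theorem~\ref{5.9}, namely $\pi_{5.1}(\emptyset)=\pi_{5.9}(\emptyset)=5/8$: you invoke Lemma~\ref{complementarity}, compute the $K_5^-$-density of the balanced complete bipartite $3$-graph, and describe a Flagmatic certificate for the upper bound. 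That argument is essentially correct and is exactly the paper's proof of Theorem~\ref{5.9}, but it is not a proof of $\pi(K_5)=3/4$.

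To be clear about why the two are genuinely different: $\pi_{5.9}(\emptyset)$ is an unconstrained inducibility problem (maximise the fraction of $5$-sets spanning exactly $9$ of the $10$ possible edges, over \emph{all} $3$-graphs), whereas $\pi(K_5)$ is a Tur\'an-density problem (maximise the fraction of $3$-sets spanning an edge, over all $K_5$-\emph{free} $3$-graphs). Neither quantity determines the other. The paper's point is only that, among the many conjectured extremal configurations for $\pi(K_5)$, the complete bipartite one is the one that maximises the $K_5^-$-density; this is an analogue of Theorem~\ref{k4-nok4} and a piece of circumstantial evidence, not a proof. Indeed, the multiplicity of non-isomorphic near-extremal constructions is precisely what makes $\pi(K_5)=3/4$ (like $\pi(K_4)=5/9$) resistant to the semi-definite method: there is no unique stable extremum for the certificate to ``lock onto''. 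A straightforward Flagmatic run of the kind you describe has been tried and does not close the gap, so the optimism expressed in your final paragraph would not be warranted for the actual conjecture.
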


One of the constructions attaining the bound is given by taking a balanced complete bipartite $3$-graph. Many other non-isomorphic constructions are known~\cite{S95}. However, what Theorem~\ref{5.9} shows is that the complete bipartite $3$-graph is, out of all of these, the one which maximises the number of induced copies of $K_5^-$, that is of $5$-sets spanning all but one of the possible $3$-edges. This is a direct analogue of our earlier result Theorem~\ref{k4-nok4}.

We close this section on $3$-graphs by giving upper bounds on the inducibility of two other $3$-graphs on $5$ vertices.

\begin{proposition}\label{f32}
\[0.349325 < \alpha < \pi_{F_{3,2}}(\emptyset) < 0.349465,\]
where $\alpha$ is the maximum of
\[\frac{10 x^2 (1-x)^3}{1-x^5}\]
in the interval $[0, 1]$.
\end{proposition}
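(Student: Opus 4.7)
The plan is to prove the upper and lower bounds separately, following the template of the other inducibility results in this paper. The upper bound $\pi_{F_{3,2}}(\emptyset) < 0.349465$ would be obtained by a routine flag algebra computation with Flagmatic (see Section~\ref{flagmaticsec}), and I would simply produce and refer to the corresponding certificate. The substantive task is to construct and analyse an explicit family of $3$-graphs whose $F_{3,2}$-density realises the given rational function of $x$.

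The construction I would use is the iterated blow-up of the $2$-vertex degenerate $3$-graph $H = ([2], \{112, 222\})$, with iteration inside part $1$. Explicitly: partition $V$ as $A_0 \sqcup B_0$ with $|A_0| \approx x n$ and $|B_0| \approx (1-x) n$; declare a triple to be a $3$-edge if and only if it is either wholly contained in $B_0$ or contains exactly $2$ vertices in $A_0$ and $1$ in $B_0$; then recursively apply the same construction inside $A_0$.

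To count induced $F_{3,2}$'s, I would classify a random $5$-set $S$ by the number $k \in \{0, 1, \dots, 5\}$ of its vertices in $B_0$. The case $k = 0$ reduces by self-similarity to the subproblem inside $A_0$, contributing $x^5 p(x)$ to the asymptotic $F_{3,2}$-density $p(x)$ of the construction. The cases $k = 5$ and $k = 4$ yield a complete $K_5$ and a $K_4$ plus an isolated vertex, respectively. The cases $k = 1$ and $k = 2$ each force at least $6$ induced edges of the form ``$1$ in $B_0$, $2$ in $A_0$'', which is too many for $F_{3,2}$. The remaining case $k = 3$ occurs with asymptotic probability $\binom{5}{3} x^2 (1-x)^3 = 10 x^2 (1-x)^3$ and always yields exactly $F_{3,2}$: writing the three $B_0$-vertices as $v_1, v_2, v_3$ and the two $A_0$-vertices as $u_1, u_2$, one gets the single triple edge $v_1 v_2 v_3$ together with the three ``book'' edges $u_1 u_2 v_i$, which realises the four-edge structure $\{123, 124, 125, 345\}$ with $\{u_1, u_2\}$ as the degree-$3$ pair. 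Summing the $k=0$ and $k=3$ contributions gives the recursion
\[ p(x) = 10 x^2 (1-x)^3 + x^5 p(x), \]
hence $p(x) = 10 x^2(1-x)^3 / (1-x^5)$.

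Taking the supremum over $x \in [0, 1]$ yields $\alpha \le \pi_{F_{3,2}}(\emptyset)$. The explicit estimate $\alpha > 0.349325$ would be obtained by elementary calculus: using the factorisation $1 - x^5 = (1-x)(1 + x + x^2 + x^3 + x^4)$, setting the logarithmic derivative of $p(x)$ to zero reduces to the quartic $x + x^2 + x^3 + x^4 = 2/3$, whose unique root in $(0, 1)$ can be located and evaluated numerically. The main obstacle, though a mild one, is confirming the exhaustive nature of the case analysis on $k$---in particular, that no subtle cross-level effect in the iteration creates additional $F_{3,2}$ copies that I have missed. This reduces to the single observation that whether a given $3$-subset is an edge depends only on its distribution between $A_0$ and $B_0$, so the level-$0$ split of $S$ already determines its induced subgraph (modulo the recursive $k=0$ case), which makes the casework genuinely clean.
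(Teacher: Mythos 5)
Your proof is correct and follows the same approach as the paper: the upper bound via Flagmatic and the lower bound via the iterated unbalanced blow-up of $([2],\{112,222\})$ inside part $1$. The paper states this construction without further detail, so your case analysis on the number of vertices in $B_0$ (showing that only $k=3$ yields an induced $F_{3,2}$, together with the self-similar $k=0$ term) and the resulting recursion $p(x)=10x^2(1-x)^3+x^5p(x)$ are exactly the computation the authors implicitly rely on.
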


\begin{proof} The upper bound is from a flag algebra calculation using Flagmatic (see Section~\ref{flagmaticsec} for how to obtain a certificate). The lower bound is obtained by taking a unbalanced blow-up of $([2], \{112,222\})$, iterated inside part $1$, where a proportion $\alpha$ of the vertices are assigned to part 1 at each stage.
\end{proof}

We believe that the lower bound construction given above is extremal:

\begin{conjecture}
\[\pi_{F_{3,2}}(\emptyset)=\max_{x \in [0,1]} \frac{10 x^2 (1-x)^3}{1-x^5}.\]
\end{conjecture}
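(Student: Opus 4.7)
My plan is to establish the stated chain of inequalities in three steps, one for each of the three comparisons $0.349325 < \alpha$, $\alpha \le \pi_{F_{3,2}}(\emptyset)$, $\pi_{F_{3,2}}(\emptyset) < 0.349465$. The bulk of the interesting work is in the lower-bound construction; the numerical estimate on $\alpha$ and the upper bound are routine.

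\emph{Step 1: lower-bound construction giving $\pi_{F_{3,2}}(\emptyset) \geq \alpha$.} Fix $x \in (0,1)$ and let $G(x)$ be the iterated construction: partition $[n]$ into parts $A$ of size $\lfloor xn \rfloor$ and $B$ of size $\lceil (1-x)n \rceil$, take as edges every triple in $B$ (the ``$222$'' edges) and every triple with two vertices in $A$ and one in $B$ (the ``$112$'' edges), and then iterate the construction inside $A$. Let $p(x)$ be the asymptotic $F_{3,2}$-density of $G(x)$. To compute $p(x)$ I pick a random $5$-set and condition on its split $(k,5-k)$ between $A$ and $B$. The key observation is that only two splits contribute: the split $(5,0)$ recurses inside $A$ and so contributes $x^{5} p(x)$; the split $(2,3)$ gives exactly the edge set $\{abx, aby, abz, xyz\}$ where $a,b \in A$ and $x,y,z \in B$, with degree sequence $(3,3,2,2,2)$ matching $F_{3,2}$ precisely. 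All other splits are ruled out: splits $(3,2)$ and $(4,1)$ already contain at least six forced ``$112$''-edges and hence too many; split $(1,4)$ yields exactly $\binom{4}{3}=4$ edges but in the structure $K_4 \sqcup \text{point}$; and split $(0,5)$ yields $K_5$. Solving the recurrence $p(x) = 10 x^{2}(1-x)^{3} + x^{5} p(x)$ gives $p(x) = 10 x^{2}(1-x)^{3}/(1-x^{5})$, so $\pi_{F_{3,2}}(\emptyset) \geq \sup_{x\in [0,1]} p(x) = \alpha$.

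\emph{Step 2: numerical bound $\alpha > 0.349325$.} Differentiating $f(x) = 10x^{2}(1-x)^{3}/(1-x^{5})$ and clearing the positive factor $x(1-x)^{2}/(1-x^{5})^{2}$ from the numerator, the critical-point equation simplifies to $3x^{5} - 5x + 2 = 0$, which factors as $(x-1)(3x^{4} + 3x^{3} + 3x^{2} + 3x - 2) = 0$. The root $x=1$ is a boundary point where $f$ vanishes, so the interior maximum is attained at the unique root $x_0 \in (0,1)$ of the quartic $q(x) = 3x^{4} + 3x^{3} + 3x^{2} + 3x - 2$. Uniqueness follows from $q'(x) > 0$ on $(0,1)$, and a short bisection (computing $q$ at a handful of rationals) locates $x_0$ to sufficient precision, after which evaluating $f$ at a rational point inside this interval and using monotonicity of the numerator/denominator estimates delivers $\alpha = f(x_0) > 0.349325$.

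\emph{Step 3: upper bound $\pi_{F_{3,2}}(\emptyset) < 0.349465$.} Apply Razborov's semi-definite method via Flagmatic at a suitable order $N$ (large enough to close the gap), with $F_{3,2}$ as the target density, and read off the rigorous numerical bound from the certificate \texttt{maxf32.js}. Verification is a routine positive-semidefiniteness check of the matrices in the certificate, as described in \cite{FRV11}.

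The only genuinely combinatorial step is Step~1, and within it the place that needs actual thought is the case analysis ruling out all splits other than $(5,0)$ and $(2,3)$; this is where one must use the specific degree sequence of $F_{3,2}$ rather than just its edge count, since split $(1,4)$ produces a different $4$-edge $3$-graph on five vertices. Steps~2 and~3 are mechanical once the construction is in hand.
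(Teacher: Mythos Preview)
You have not proved the statement you were given. The statement is the \emph{Conjecture} that
\[
\pi_{F_{3,2}}(\emptyset)=\max_{x \in [0,1]} \frac{10 x^{2}(1-x)^{3}}{1-x^{5}},
\]
an exact equality; in the paper this is explicitly left open. What you have written is a (correct) proof of the adjacent \emph{Proposition}, namely the chain of strict inequalities
\[
0.349325 < \alpha \le \pi_{F_{3,2}}(\emptyset) < 0.349465.
\]
Your Steps~1--3 are fine for that Proposition and match the paper's argument exactly: the iterated blow-up of $([2],\{112,222\})$ inside part~$1$ for the lower bound, elementary calculus to locate the maximiser of $f$ as the unique root in $(0,1)$ of $3x^{4}+3x^{3}+3x^{2}+3x-2$, and the Flagmatic certificate \texttt{maxf32.js} for the numerical upper bound. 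But none of this closes the gap between $\alpha$ and $0.349465$, so it does not yield the conjectured equality.

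In particular, the flag-algebra certificate you invoke in Step~3 only delivers $\pi_{F_{3,2}}(\emptyset) < 0.349465$, not $\pi_{F_{3,2}}(\emptyset) \le \alpha$. To actually prove the Conjecture one would need either a sharp semidefinite bound (which the paper was unable to obtain, presumably because the extremal construction is an iterated blow-up) or an entirely different argument giving the matching upper bound. Your write-up contains no such argument, so as a proof of the Conjecture it has a genuine gap at the upper bound.
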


Finally, we note that the random geometric construction given in Theorem~\ref{4.2}, which is extremal for the inducibility of the self-complementary graph $4.2$, also gives a reasonably good lower bound on the inducibility of the self-complementary graph $C_5$:

\begin{proposition} \label{c5}
\[ 0.1875=3/16\leq \pi_{C_5}(\emptyset) < 0.198845.\]
\end{proposition}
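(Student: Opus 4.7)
The upper bound is established by a flag algebra computation using Flagmatic, with certificate \verb|maxc5.js| available via Section~\ref{flagmaticsec}.

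For the lower bound, the plan is to use the random geometric construction from the proof of Theorem~\ref{4.2}: place $n$ vertices evenly on the unit circle, let the chords between them subdivide the closed disc into regions, assign each region an independent uniform $\{0,1\}$ value, and declare a triple $xyz$ to be a $3$-edge precisely when the $\mathbb{F}_2$-sum of the values of the regions inside triangle $xyz$ is $1$. We then apply the first moment method: if the expected fraction of $5$-vertex subsets inducing a $C_5$ is $3/16$, some realization attains a $C_5$-density of at least $3/16$.

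Fix a $5$-subset $S=\{v_1,\dots,v_5\}$ in convex position on the circle, labeled in cyclic order. The ten chords joining pairs in $S$ subdivide the convex hull into exactly $11$ regions: one central pentagon, five ``star-point'' triangles between adjacent diagonals, and five ``corner'' triangles at the vertices of the outer pentagon. For each of the ten triples $T\subset S$, the edge indicator $Y_T$ equals the $\mathbb{F}_2$-sum of the values of the regions contained in triangle $T$, so the joint distribution of $(Y_T)_{T}$ is the pushforward of the uniform distribution on $\mathbb{F}_2^{11}$ under the region--triangle incidence linear map $A\colon\mathbb{F}_2^{11}\to\mathbb{F}_2^{10}$.

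There are $5!/|\mathrm{Aut}(C_5)|=12$ cyclic labelings of $S$ as a $C_5$, each corresponding to a specific vector in $\mathbb{F}_2^{10}$ (five consecutive triples marked as edges, the other five as non-edges). The key claim to verify is
\[
\Pr\bigl[S\text{ induces a }C_5\bigr]\;=\;\sum_{\sigma}\Pr\bigl[(Y_T)_T = y(\sigma)\bigr]\;=\;\frac{3}{16},
\]
where the sum runs over the $12$ labelings $\sigma$ and $y(\sigma)\in\mathbb{F}_2^{10}$ is the edge pattern induced by $\sigma$. The dihedral symmetry $D_5$ of the regular pentagon reduces the work: the twelve labelings fall into four orbits of sizes $1,1,5,5$ (the ``pentagon'' labeling $(1,2,3,4,5)$ using the five ``short'' triangles, the ``pentagram'' labeling $(1,3,5,2,4)$ using the five ``long'' triangles, and two mixed orbits with three short${}+{}$two long and two short${}+{}$three long consecutive triples respectively), so it suffices to evaluate the probability of one representative edge pattern per orbit.

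The main obstacle is the combinatorial bookkeeping: explicitly identifying which of the eleven regions are contained in each of the ten triangles and then solving small $\mathbb{F}_2$-linear systems to compute the probability of each representative pattern. This is a finite problem that can be handled either by direct computation exploiting the symmetry of the regular pentagon, or by a short computer-algebra check.
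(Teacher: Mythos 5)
Your proposal takes essentially the same approach as the paper: the upper bound is the same Flagmatic certificate, and the lower bound is obtained from the random geometric construction of Theorem~\ref{4.2} by computing, over the $11$ regions determined by the diagonals of the convex pentagon on a $5$-set, the probability that the induced edge pattern is a labelled copy of $C_5$. The paper simply calls this a ``rather tedious case analysis,'' whereas you add a clean organizing framework (an $\mathbb{F}_2$-linear incidence map plus the $D_5$-orbit decomposition of the $12$ cyclic labelings into orbits of sizes $1,1,5,5$); this matches the paper in substance and rigor, since neither actually writes out the final case check.
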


\begin{proof} The upper bound is from a flag algebra calculation using Flagmatic (see Section~\ref{flagmaticsec} for how to obtain a certificate). The lower bound comes from considering the random geometric construction we introduced in the proof of Theorem~\ref{4.2}. As the vertices are scattered on the unit circle, any five of them define a convex pentagon. Drawing in the diagonals divides this pentagon into $11$ disjoint regions. The result then follows from a rather tedious case analysis.
\end{proof}

\begin{figure}
\begin{center}
\usetikzlibrary{arrows}
\begin{tikzpicture}
\tikzstyle{vertex} = [circle, draw, fill=white, text centered, inner sep=2pt]
\draw (72*1+90:1.8) node[vertex] (a) {$a$}
(72*2+90:1.8) node[vertex] (b) {$b$}
(72*3+90:1.8) node[vertex] (c) {$c$}
(72*4+90:1.8) node[vertex] (d) {$d$}
(72*5+90:1.8) node[vertex] (e) {$e$};
\draw[-latex] (a)--(b); \draw[-latex] (a)--(c);
\draw[-latex] (d)--(a); \draw[-latex] (d)--(b);
\draw[-latex] (d)--(c); \draw[-latex] (d)--(e);
\draw[dashed] (b)--(c)--(e)--(a);
\end{tikzpicture}
\end{center}
\caption{From the proof of Proposition~\ref{dto3}.} \label{dto3fig}
\end{figure}
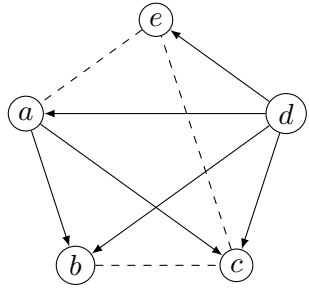

\section{A digression into directed graphs}

\subsection{The out-star of order $3$}
	
We define the \emph{out-star} of order $k$ to be the directed graph
\[\vec{S}_k= ([k], \{\vec{1i}: \ i \in [k]\setminus\{1\}\}). \]
That is, the star with $k-1$ edges oriented away from the centre. In this subsection, we shall be interested in particular in $\vec{S}_3$ and its relation to the $3$-graph $C_5$, the strong cycle on $5$ vertices.

Given a directed graph $D$ on $n$ vertices, let us define a $3$-graph $G(D)$ on the same vertex set by setting $xyz$ to be a $3$-edge whenever the $3$-set $\{x,y,z\}$ induces a copy of $\vec{S}_3$ in $D$.

\begin{proposition}\label{dto3}
$G(D)$ is a $(C_5, K_4)$-free $3$-graph.
\end{proposition}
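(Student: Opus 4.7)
The plan is to assign a canonical \emph{centre} to every 3-edge of $G(D)$, prove a compatibility lemma governing how centres of 3-edges sharing two vertices must relate, and then deduce $K_4$- and $C_5$-freeness by short combinatorial arguments. For every 3-edge $\{x,y,z\}$ of $G(D)$, the induced sub-digraph $D[\{x,y,z\}] \cong \vec{S}_3$ has a unique vertex of induced out-degree $2$; I call this vertex the centre $c(\{x,y,z\})$. The other two vertices (the \emph{leaves}) are joined by no edge of $D$, and neither sends an edge to the centre.

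The key lemma is the following: if $E = \{u,v,w\}$ and $E' = \{u,v,w'\}$ are 3-edges of $G(D)$ sharing exactly the pair $\{u,v\}$, then either $c(E) = c(E') \in \{u,v\}$, or else $c(E) = w$ and $c(E') = w'$. This is proved by a case analysis on $c(E) \in \{u,v,w\}$: if $c(E) = u$ then $\vec{uv} \in D$ and $\vec{vu} \notin D$, which rules out $c(E') \in \{v, w'\}$ and forces $c(E') = u$; the case $c(E) = v$ is symmetric; and if $c(E) = w$ then neither $\vec{uv}$ nor $\vec{vu}$ lies in $D$, so $c(E') \notin \{u,v\}$, leaving $c(E') = w'$.

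For $K_4$-freeness I would suppose $\{a,b,c,d\}$ spans a copy of $K_4$ in $G(D)$ and, after relabelling, take $c(\{a,b,c\}) = a$. Applying the lemma to the pairs $(\{a,b,c\}, \{a,b,d\})$ and $(\{a,b,c\}, \{a,c,d\})$ forces $c(\{a,b,d\}) = a = c(\{a,c,d\})$. The lemma applied to $(\{a,b,d\}, \{b,c,d\})$ (sharing $\{b,d\}$) then yields $c(\{b,c,d\}) = c$, whereas applied to $(\{a,c,d\}, \{b,c,d\})$ (sharing $\{c,d\}$) it yields $c(\{b,c,d\}) = b$, a contradiction.

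For $C_5$-freeness I would label the five 3-edges of the alleged induced $C_5$ as $E_i = \{v_i, v_{i+1}, v_{i+2}\}$ (indices modulo $5$), and record the position $p_i \in \{L, M, R\}$ of $c(E_i)$ within $E_i$ (left, middle or right). The lemma applied to $(E_i, E_{i+1})$, which share $\{v_{i+1}, v_{i+2}\}$, produces the transition rule $L \mapsto R$, $M \mapsto L$, $R \mapsto M$ — a single $3$-cycle on $\{L, M, R\}$. The closed sequence $(p_1, \ldots, p_5, p_1)$ must therefore have length divisible by $3$, which is impossible since $5 \not\equiv 0 \pmod 3$. The one delicate step in this plan is the compatibility lemma itself, which requires careful bookkeeping of exactly which directed edges appear in each induced copy of $\vec{S}_3$; once that is in place, the rest is easy combinatorial symbol-pushing.
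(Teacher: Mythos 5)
Your argument is correct, and at its core it is the same as the paper's: both proofs track which vertex of each $3$-edge is the apex of the induced $\vec S_3$ and propagate this information across $3$-edges that share a pair. What you do differently is to extract this propagation step as a standalone \emph{compatibility lemma} and then encode the $C_5$ case as a deterministic transition rule $L\mapsto R\mapsto M\mapsto L$ on positions, reducing $C_5$-freeness to the arithmetic fact $3\nmid 5$. The paper instead carries out the same case analysis inline, following the centres around the cycle vertex by vertex and reaching a contradiction at the last edge, then remarks without proof that the argument generalises to $C_t$-freeness for all $t\not\equiv 0\pmod 3$. Your packaging makes that generalisation immediate, which is a genuine (if modest) advantage. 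One small note: you speak of ``the alleged induced $C_5$,'' but the proposition asserts subgraph-freeness; your argument in fact only uses that the five prescribed triples are $3$-edges of $G(D)$, so it applies verbatim to a not-necessarily-induced copy, and nothing needs to change.
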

\begin{proof}
Let us first show that $G(D)$ is $K_4$ free. Suppose $\{a,b,c,d\}$ is a $4$-set of vertices in $G(D)$ that spans a $K_4$. Without loss of generality, we may assume that $\vec{ab}, \vec{ac}$ are in $E(D)$. Therefore neither $\vec{bc}, \vec{cb}$ are in $E(D)$. Since $\{a,b,d\}$ also spans a $3$-edge in $G(D)$, it follows that $\vec{ad} \in E(D)$ and $\vec{bd}, \vec{db}\notin E(D)$. But then $\{b,d,c\}$ spans at most one edge of $D$, and hence cannot be a $3$-edge of $G(D)$, a contradiction.

Now suppose $\{a,b,c,d,e\}$ is a $5$-set of vertices that spans a $C_5$ in $G(D)$, with edges $abc$, $bcd$, $cde$, $dea$ and $eab$. Since $abc$ is an edge, $\{a,b,c\}$ must induce a copy of $\vec{S}_3$ in $D$.

First of all, suppose we have $\vec{ab}, \vec{ac}$ in $E(D)$, and $\vec{bc}, \vec{cb}$ not in $E(D)$, as depicted in Figure~\ref{dto3fig}. As $bcd \in E(G(D))$, $\{b,c,d\}$ must span a copy of $\vec{S}_3$, and we must have $\vec{db}, \vec{dc} \in E(D)$. Similarly, as $cde \in E(G(D))$ we have $\vec{de} \in E(D)$ and $\vec{ec},\vec{ce} \notin E(D)$. Again as $dea \in E(G(D))$ we must have $\vec{da} \in E(D)$ and $\vec{ae}, \vec{ea} \notin E(D)$. But then $\{e,a,b\}$ cannot induce a copy of $\vec{S}_3$ in $D$, and hence $eab$ cannot be a $3$-edge of $G(D)$, a contradiction.

By symmetry, this argument also rules out the possibility of having $\vec{ca}, \vec{cb}$ both in $E(D)$ and $\vec{ab}, \vec{ba} \notin E(D)$. This leaves us with one last possibility, namely that both $\vec{ba}, \vec{bc}$ are in $E(D)$ and neither of $\vec{ac}, \vec{ca}$ is in $E(D)$. Since $bcd$ is an edge of $G(D)$, this implies that $\vec{bd}$ is in $E(D)$ while neither of $\vec{cd}, \vec{dc}$ is. But this also leads to a contradiction by our previous argument, with $bcd$ now playing the role of $abc$. Thus $G(D)$ must be $C_5$-free, as claimed.
\end{proof}

In fact more is true: the proof of the second part of Proposition~\ref{dto3} generalises to show that, for all integers $t\geq 3$ with $t$ congruent to $1$ or $2$ modulo $3$, $G(D)$ contains no copy of the strong $t$-cycle
\[C_{t}=([t], \{ 123, \ 234, \ \dots \ , (t-2)(t-1)t, \ (t-1)t1, \ t12 \}.\]
An interesting question is whether some kind of converse is true. Note that an immediate consequence of Proposition~\ref{dto3} is the following:

\begin{corollary}\label{s3c5ineq}
\[\pi_{\vec{S}_3}(\emptyset) \leq \pi(K_4, C_5, C_7).\]
\end{corollary}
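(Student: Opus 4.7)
The plan is to repackage Proposition~\ref{dto3} (together with the extension to $C_7$ noted in the paragraph following its proof) as a density inequality. The crucial observation is that the map $D \mapsto G(D)$ is designed precisely so that $3$-edges of $G(D)$ correspond bijectively to induced copies of $\vec{S}_3$ in $D$; hence, for every directed graph $D$ on $n$ vertices,
\[
e_{\vec{S}_3}(D) = \size{E(G(D))}.
\]

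Given this identity, I would proceed as follows. For each $n \geq 3$, let $D_n$ be a directed graph on $n$ vertices maximising $e_{\vec{S}_3}(\cdot)$, so that $e_{\vec{S}_3}(D_n) = \ex_{\vec{S}_3}(n, \emptyset)$. Proposition~\ref{dto3} asserts that $G(D_n)$ is $\{K_4, C_5\}$-free, and the remark immediately following its proof notes that the same case analysis generalises to exclude $C_t$ for every $t \geq 3$ with $t \equiv 1$ or $2 \pmod 3$; in particular $G(D_n)$ is $C_7$-free. Therefore $G(D_n)$ is a $\{K_4, C_5, C_7\}$-free $3$-graph on $n$ vertices, so
\[
\ex_{\vec{S}_3}(n, \emptyset) = \size{E(G(D_n))} \leq \ex(n, \{K_4, C_5, C_7\}).
\]
Dividing through by $\binom{n}{3}$ and letting $n \to \infty$ (both limits exist by Proposition~\ref{averaging}) yields the claim.

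Since essentially all the content is already present in Proposition~\ref{dto3}, there is no genuine obstacle here. The only non-trivial input beyond what has been explicitly written out is the extension of the $C_5$-freeness argument to $C_7$; but this is the straightforward adaptation in which the out-star pattern successively forces the orientations of edges of $D$ along consecutive triples of the putative $C_7$, and the modulo-$3$ arithmetic is what ultimately prevents the closing triple from being realised as a $\vec{S}_3$. As this extension is stated in the paragraph after Proposition~\ref{dto3}, I would simply invoke it rather than re-derive it.
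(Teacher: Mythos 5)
Your proposal is correct and is exactly the argument the paper has in mind; the paper simply labels the corollary an ``immediate consequence'' of Proposition~\ref{dto3} (together with the remark on $C_t$ for $t \equiv 1, 2 \pmod 3$) without writing out the passage from the graph-level freeness statement to the density inequality, and you have filled in precisely that routine step: identifying $e_{\vec{S}_3}(D)$ with $\size{E(G(D))}$, applying the freeness result to an extremal $D_n$, and passing to the limit.
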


It is easy to check that the conjectured extremal $3$-graph construction of Mubayi and R\"odl for the $\pi(C_5)$ problem is both $K_4$-free and $C_{t}$-free for all $t\geq 3$, where $t$ is congruent to $1$ or $2$ modulo $3$. We ask therefore the following question:

\begin{question}\label{3gtodq}
Does there exist, for every $\varepsilon>0$, a $\delta=\delta(\varepsilon)>0$ and $N=N(\varepsilon)$ such that if $G$ is a $C_5$-free $3$-graph on $n>N$ vertices with at least $(2\sqrt{3}-3 - \delta)\binom{n}{3}$ edges, then there is a directed graph $D$ on $n$ vertices such that the $3$-graphs $G$ and $G(D)$ differ on at most $\varepsilon \binom{n}{3}$ edges?
\end{question}

An affirmative answer to Question~\ref{3gtodq} would, by our next result, automatically imply Conjecture~\ref{c5conj}:

\begin{theorem}\label{1213}
\[\pi_{\vec{S}_3}(\emptyset)= 2\sqrt{3}-3.\]
\end{theorem}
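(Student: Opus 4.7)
The plan is a two-sided attack: an iterated blow-up construction for the lower bound, and Razborov's semi-definite method via Flagmatic for the upper bound.

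For the lower bound, I would fix $\alpha=(\sqrt{3}-1)/2$ and define $D_n$ recursively. Partition $V(D_n)$ into an ``iterated'' part $P$ of size $\lceil\alpha n\rceil$ and a ``terminal'' part $T$ of size $n-|P|$; orient every edge between the two parts from $P$ to $T$; leave $T$ internally edgeless; and place $D_{|P|}$ on $P$. Each vertex $v$ then has a natural \emph{level} $\ell(v)\geq 1$, namely the iteration at which it first falls into a terminal part. In the limit $n\to\infty$ the levels satisfy $\Pr[\ell=k] = p_k = \alpha^{k-1}(1-\alpha)$, and by construction $\vec{uv}\in E(D_n)$ iff $\ell(u)>\ell(v)$. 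Consequently a triple $\{x,y,z\}$ induces $\vec{S}_3$ precisely when two of the vertices share some level $j$ while the third has strictly greater level $k>j$: the higher-level vertex is then the centre, with out-edges to the two non-adjacent level-$j$ leaves. Summing over such patterns yields an asymptotic $\vec{S}_3$-density of
\[
3\sum_{j<k} p_j^2\, p_k \;=\; \frac{3\alpha(1-\alpha)^2}{1-\alpha^3} \;=\; \frac{3\alpha(1-\alpha)}{1+\alpha+\alpha^2},
\]
which a routine calculus exercise shows is maximised at $\alpha=(\sqrt{3}-1)/2$ (the positive root of $\alpha^2+\alpha-\tfrac12=0$), with value $2\sqrt{3}-3$.

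For the upper bound, I would run the flag algebra semi-definite method for induced densities in finite directed graphs, as implemented in Flagmatic. The output is the certificate \verb|maxs3.js| accompanying the paper, which records the positive semi-definite matrices and flag-product expressions whose combination rearranges into the inequality $\pi_{\vec{S}_3}(\emptyset)\leq 2\sqrt{3}-3$; verification is then a matter of running the \verb|inspect_certificate.py| script on the certificate.

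The hard part will be the upper bound. As discussed in~\cite{FRV11}, iterated blow-up extremal configurations are notoriously resistant to the semi-definite method, because the infinitely nested extremal structure produces many nearly-tight constraints that typically leave residual slack in the SDP. What makes this problem tractable is, I suspect, its close relation (via Proposition~\ref{dto3} and Corollary~\ref{s3c5ineq}) to the Mubayi--R\"odl construction for $C_5$: the target value $2\sqrt{3}-3$ arises as the maximum of the rational function above, which already encodes the fixed-point structure of the iterated blow-up, and Flagmatic's solver is apparently able to produce a dual SDP optimum matching this algebraic value exactly. This would also explain why, per the paper's remarks, this is cited as the first `simple' instance in which an iterated blow-up can be shown to be extremal by the method.
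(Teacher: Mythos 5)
Your proposal is correct and matches the paper's approach exactly: the lower bound is the iterated unbalanced blow-up of $\vec{S}_2$ with proportion $(\sqrt{3}-1)/2$ in the iterated part, and the upper bound is the Flagmatic semi-definite certificate \verb|maxs3.js|. Your explicit level-based density computation $\frac{3\alpha(1-\alpha)}{1+\alpha+\alpha^2}$, maximized at the positive root of $\alpha^2+\alpha-\tfrac12=0$ to give $2\sqrt{3}-3$, is a correct and welcome elaboration of the lower bound the paper only sketches.
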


\begin{proof} The upper bound is from a flag algebra calculation using Flagmatic (see Section~\ref{flagmaticsec} for how to obtain a certificate). The lower bound comes from an unbalanced blow-up of the directed graph 
\[\vec{S}_2=([2], \{\vec{12}\}),\]
and iterating the construction inside part $1$, setting at each stage of the construction a proportion $(\sqrt{3}-1)/2$ of the vertices in part $1$ and the remaining $(3-\sqrt{3})/2$ proportion of the vertices in part $2$.
\end{proof}

Denote the lower bound construction in Theorem~\ref{1213} by $D$; then $G(D)$ is exactly the $C_5$-free construction of Mubayi and R\"odl described in Section~2.2. It is an interesting question as to why exactly it is that Flagmatic can give us exact bounds on the $\vec{S}_3$-density problem for directed graphs, but not for the Tur\'an density problem for the $3$-graph $C_5$.

In a forthcoming note~\cite{FRV12}, we use the directed graph removal lemma of Alon and Shapira~\cite{AS04} to prove that the construction $D$ is stable for this problem. 

Theorem~\ref{1213} is, to the best of our knowledge, the first known irrational inducibility. But perhaps more significantly, it is the first `simple' problem for which an iterated blowup construction can be shown to be extremal. Pikhurko~\cite{P11b} has shown the far stronger result that every iterated blowup construction for $3$-graphs is the unique extremal configuration for \emph{some} Tur\'an density problem. However his proof works by a kind of compactness argument, and does not give explicit families of suitable forbidden $3$-graphs, but rather proves that such families exist.

\subsection{Other directed graphs}

Let us now consider $\vec{S}_4$. As in the previous subsection, given a directed graph $D$ we define a $3$-graph $G$ on the same vertex set by letting $xyz$ be a $3$-edge if the $3$-set $\{x,y,z\}$ induces a copy of the out-star of order $3$, $\vec{S}_3$. Then the number of copies of $K_4^-$ in $G(D)$ is exactly the number of copies of $\vec{S}_4$ in $D$, whence we have:

\begin{proposition}
\[\pi_{\vec{S}_4}(\emptyset) \leq \pi_{K_4^-}(C_5).\]
\end{proposition}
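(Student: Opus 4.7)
The plan is to establish the identity $e_{\vec{S}_4}(D) = e_{K_4^-}(G(D))$ for every directed graph $D$ on $n$ vertices, where $G(D)$ is the $3$-graph associated to $D$ as in Proposition~\ref{dto3}. Combined with the fact that $G(D)$ is $C_5$-free (Proposition~\ref{dto3}), this immediately yields
\[ e_{\vec{S}_4}(D) = e_{K_4^-}(G(D)) \leq \ex_{K_4^-}(n, C_5), \]
whence dividing by $\binom{n}{4}$ and taking $n \to \infty$ gives the stated inequality.

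To establish the identity, I would set up a bijection between induced copies of $\vec{S}_4$ in $D$ and induced copies of $K_4^-$ in $G(D)$ on the same vertex set, sending a copy of $\vec{S}_4$ with centre $a$ on $\{a,b,c,d\}$ to the copy of $K_4^-$ on $\{a,b,c,d\}$ with missing triple $\{b,c,d\}$. The forward direction is immediate from the definitions: if $\{a,b,c,d\}$ induces $\vec{S}_4$ with centre $a$, the only arcs present among these vertices are $\vec{ab}, \vec{ac}, \vec{ad}$, so each of the three triples through $a$ induces a copy of $\vec{S}_3$ centred at $a$, while $\{b,c,d\}$ carries no arcs; hence in $G(D)$ the $4$-set $\{a,b,c,d\}$ spans exactly the three $3$-edges through $a$, namely an induced $K_4^-$. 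For the reverse direction, suppose $\{a,b,c,d\}$ induces $K_4^-$ in $G(D)$ with missing triple $\{b,c,d\}$. Each of the three triples through $a$ then induces some $\vec{S}_3$ in $D$; a case analysis on the possible centres, mirroring the one in the proof of Proposition~\ref{dto3}, shows that the only way for these three $\vec{S}_3$'s to be mutually compatible while also forcing $\{b,c,d\}$ to \emph{not} induce an $\vec{S}_3$, is for $a$ to be the common centre of all three, which yields the desired induced copy of $\vec{S}_4$.

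The main obstacle is this reverse case analysis. Concretely, if one of $b,c,d$ — say $b$ — were the centre of the $\vec{S}_3$ induced on $\{a,b,c\}$, then $\vec{ba}, \vec{bc} \in E(D)$ and no other arcs lie in $\{a,b,c\}$; the requirement that $\{a,b,d\}$ and $\{a,c,d\}$ also induce $\vec{S}_3$'s while $\{b,c,d\}$ does not, then forces constraints on the arcs among $\{a,c,d\}$ that are inconsistent with $\{a,c,d\}$ inducing $\vec{S}_3$ at all. Checking all such configurations is somewhat tedious but wholly analogous to the argument already carried out for Proposition~\ref{dto3}, and presents no essential difficulty beyond the bookkeeping of cases.
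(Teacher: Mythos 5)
Your argument is correct and follows the same route as the paper: use Proposition~\ref{dto3} to conclude $G(D)$ is $C_5$-free, then observe that induced copies of $\vec{S}_4$ in $D$ correspond bijectively to induced copies of $K_4^-$ in $G(D)$. The paper simply asserts this correspondence as a remark; you supply the (correct) verification, with the reverse direction's case analysis correctly forcing the apex of the $K_4^-$ to be the common centre of the three induced $\vec{S}_3$'s.
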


\begin{proof} By Proposition~\ref{dto3}, for every directed graph $D$, $G(D)$ is $C_5$-free. The claimed inequality follows directly from our remark that copies of $K_4^-$ in $G(D)$ correspond exactly to copies of $\vec{S}_4$ in $D$.
\end{proof}

We conjectured in Section~2.2 that 
\[\pi_{K_4^-}(C_5)=4-6\left((\sqrt{2}+1)^{1/3}-(\sqrt{2}-1)^{1/3}\right),\]
or, more helpfully, the maximum of 
\[ \frac{ 4 x(1-x)^3}{1-x^4} \]
for $x \in [0,1]$, which is attained at the unique real root of $3t^3+3t^2+3t-1$. We have been unable to prove this using the semi-definite method, but, just as in the previous subsection, the directed graph problem proves to be more tractable, allowing us to show:

\begin{theorem}\label{121314}
\[\pi_{\vec{S}_4}(\emptyset) = \frac{ 4 p(1-p)^3}{1-p^4}, \]
where $p$ is the real root of $3t^3+3t^2+3t-1$.
\end{theorem}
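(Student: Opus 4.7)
The plan is to prove matching upper and lower bounds. For the lower bound, I would take the directed graph analogue of Mubayi and R\"odl's construction, re-tuned for $\vec{S}_4$ rather than $\vec{S}_3$. Specifically: consider an unbalanced blow-up of $\vec{S}_2 = ([2], \{\vec{12}\})$ with proportion $p$ of the vertices in part $1$ (the source) and $1-p$ in part $2$ (the sink), and iterate the construction inside part $1$. Let $f(p)$ denote the limiting $\vec{S}_4$-density of this construction.

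To compute $f(p)$, I would classify $4$-sets by the number $k$ of their vertices in the outermost part $1$. For $k = 0$ there are no edges at all. For $k = 1$ the unique part-$1$ vertex automatically induces a copy of $\vec{S}_4$ as its centre, since the three part-$2$ vertices span no edges among themselves and each receives a single incoming edge, coming from the part-$1$ vertex. For $k \in \{2,3\}$, each part-$2$ vertex receives $k \geq 2$ incoming edges from part $1$, whereas a leaf of $\vec{S}_4$ has in-degree exactly $1$, and a part-$2$ vertex cannot be the centre either (its out-degree is $0$); so no induced $\vec{S}_4$ arises. For $k = 4$ the induced sub-construction is a smaller copy of the same iterated blow-up, contributing $p^4 f(p)$ by self-similarity. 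This yields
\[ f(p) = 4p(1-p)^3 + p^4 f(p), \]
whence
\[ f(p) = \frac{4p(1-p)^3}{1-p^4}. \]
A routine logarithmic-derivative calculation, clearing denominators, shows that the maximiser on $(0,1)$ satisfies $3p^3 + 3p^2 + 3p - 1 = 0$. This has a unique real root since its derivative $9t^2 + 6t + 3$ is strictly positive on $\mathbb{R}$.

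For the upper bound, the plan is to apply the semi-definite method in the directed-graph setting already used for Theorem~\ref{1213}. I would produce a Flagmatic certificate witnessing $\pi_{\vec{S}_4}(\emptyset) \leq 4p(1-p)^3/(1-p^4)$, built from flag algebra data on small directed graphs together with a suitable positive semi-definite matrix, following the same framework as in Section~\ref{flagmaticsec}.

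The main obstacle I expect lies in the upper bound: unlike Theorem~\ref{1213}, where the optimum is a quadratic irrational, here it is a cubic irrational, so exact rational reconstruction of an optimal solution to the semi-definite programme is noticeably more delicate. The same obstruction presumably explains why the corresponding $3$-graph problem $\pi_{K_4^-}(C_5)$ (Conjecture~\ref{k4-noc5conj}) has so far resisted the method, despite the connection via Proposition~\ref{dto3}: the functor $D \mapsto G(D)$ is lossy, so a tight bound on the directed problem does not transfer.
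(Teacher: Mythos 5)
Your proposal matches the paper's proof exactly: the lower bound comes from the same iterated unbalanced blow-up of $\vec{S}_2$ (with proportion $p$ at each stage in part $1$), and the upper bound is delegated to a Flagmatic certificate in the directed-graph setting. You have additionally spelled out the self-similarity recursion $f(p) = 4p(1-p)^3 + p^4 f(p)$ and the optimality condition $3p^3+3p^2+3p-1=0$, details the paper leaves implicit but which are correct.
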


\begin{proof} The upper bound is from a flag algebra calculation using Flagmatic (see Section~\ref{flagmaticsec} for how to obtain a certificate). The lower bound comes from an unbalanced blow-up of $\vec{S}_2$ and iterating the construction inside part $1$, setting at each stage of the construction a proportion $p$ of the vertices in part $1$ and the remaining $1-p$ proportion of the vertices in part $2$.
\end{proof}

As in Theorem~\ref{1213}, call $D$ our lower bound construction for Theorem~\ref{121314}. Then $G(D)$ coincides exactly with our lower bound construction in Section~2.2 for $\pi_{K_4^-}(C_5)$, which we conjectured to be optimal.

\vspace{10pt}

So what about $\pi_{\vec{S}_k}(\emptyset)$ for general $k$? Given Theorems~\ref{1213} and~\ref{121314} it is natural to guess that in general an unbalanced blowup of $\vec{S}_2$ iterated inside part $1$ should be best possible. As we have shown, this is true for the cases $k=3$ and $k=4$, and we conjecture that this remains true for general $k$:

\begin{conjecture}\label{skconj}
For every $k\geq 3$,
\[\pi_{\vec{S}_k}(\emptyset)= \alpha_k,\]
where
\[ \alpha_k = \max_{x \in [0,1]} \frac{ k x(1-x)^{k-1}}{1-x^k}, \]
with the unique stable extremal configuration being a blow-up of $\vec{S}_2$ iterated inside part $1$, with a proportion $\alpha_k$ of the vertices assigned to part $1$ at every iteration. \end{conjecture}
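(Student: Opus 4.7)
The plan is to split the argument into a lower bound by explicit construction and an upper bound which I would first establish on a restricted ``structured'' subclass of directed graphs and then extend to all directed graphs by a symmetrisation step.

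For the lower bound, let $D_x$ denote the iterated unbalanced blow-up of $\vec{S}_2$ with a proportion $x \in [0,1]$ of vertices placed in part~$1$ at every stage (with the construction then recursed inside part~$1$) and the remaining $1-x$ placed in part~$2$. Assign each vertex $v$ of $D_x$ a \emph{level} $\ell(v)\ge 1$, being the first iteration at which $v$ lies in part~$2$: then the proportion of vertices at level $\ell$ is $(1-x)x^{\ell-1}$, same-level vertices are pairwise non-adjacent, and between distinct levels the unique directed edge is oriented from the higher level to the lower. A $k$-subset thus induces $\vec{S}_k$ if and only if exactly one of its vertices (the centre) lies at some level $\ell$ while the remaining $k-1$ vertices all lie at a common lower level $\ell' < \ell$. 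Conditioning a random $k$-subset on the number of its vertices at level~$1$, only the cases ``$0$'' and ``$k-1$'' can produce an induced $\vec{S}_k$: in every other case the required $k-1$ leaves cannot all lie at a single common lower level. Writing $T_k(x)$ for the $\vec{S}_k$-density of $D_x$, self-similarity of the construction gives
\[T_k(x) = x^k T_k(x) + k x (1-x)^{k-1} = \frac{k x (1-x)^{k-1}}{1-x^k},\]
and optimising over $x$ yields $\pi_{\vec{S}_k}(\emptyset) \ge \max_x T_k(x) = \alpha_k$.

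For the upper bound I would first treat the class of \emph{structured} directed graphs, namely those whose vertex set admits a partition into levels $L_1, L_2, \dots, L_N$ with each $L_i$ an independent set and every edge between $L_i$ and $L_j$ (for $i > j$) oriented from $L_i$ to $L_j$. For such a graph with level densities $\beta_i = |L_i|/n$ summing to $1$, the $\vec{S}_k$-density is
\[f(\beta) = k \sum_{j<i} \beta_j^{k-1}\beta_i.\]
Setting $y = 1 - \beta_1$ and renormalising $(\beta_2, \beta_3, \dots)$ to a probability distribution $\beta'$ supported on $N-1$ levels yields the self-similar identity $f(\beta) = k(1-y)^{k-1}y + y^k f(\beta')$. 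An induction on $N$, combined with the defining inequality $k(1-y)^{k-1}y \le \alpha_k(1-y^k)$ (equivalently, $k(1-y)^{k-1}y + y^k \alpha_k \le \alpha_k$), gives $f(\beta) \le \alpha_k$ for every finitely supported $\beta$; continuity of $f$ on the simplex lifts this to arbitrary $\beta$.

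The main obstacle is showing that the structured class is extremal for $\pi_{\vec{S}_k}$, i.e., that every directed graph $D$ has $\vec{S}_k$-density at most $\sup_\beta f(\beta) = \alpha_k$. Two routes seem plausible. A direct symmetrisation would first erase all double edges (which never participate in any induced $\vec{S}_k$), and then perform local re-orientations or ``twin-merging'' moves between vertex pairs with similar out-profiles, arguing at each step that the density $e_{\vec{S}_k}(D)/\binom{n}{k}$ weakly increases and that the process converges to a structured graph. A stability-based argument, in the spirit of the forthcoming note \cite{FRV12}, seems more tractable: extract a dihypergraphon limit from a sequence of asymptotically extremal directed graphs and use the directed graph removal lemma of Alon and Shapira~\cite{AS04} to conclude that any such limit must be supported on an iteration of the $\vec{S}_2$ blow-up. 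Either route appears considerably more delicate for general $k$ than for the cases $k = 3, 4$ handled by Flagmatic in Theorems~\ref{1213} and~\ref{121314}, and carrying it out uniformly in $k$ is likely to be the crux of the conjecture.
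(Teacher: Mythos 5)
The statement you are addressing is Conjecture~\ref{skconj}, which the paper does \emph{not} prove: it is open for all $k \geq 5$. The only cases established in the paper are $k=3$ and $k=4$ (Theorems~\ref{1213} and~\ref{121314}), and those rest entirely on computer-generated semi-definite (flag algebra) certificates produced by Flagmatic rather than a combinatorial argument that could be carried out uniformly in $k$. Your lower-bound argument is correct and is essentially the construction the paper has in mind: the level decomposition of the iterated $\vec{S}_2$-blow-up, the characterisation of which $k$-sets induce $\vec{S}_k$, and the self-similar recursion $T_k(x) = x^k T_k(x) + kx(1-x)^{k-1}$ are all sound, and your analysis of blow-ups of transitive tournaments via the inequality $ky(1-y)^{k-1} \le \alpha_k(1-y^k)$ (the defining property of $\alpha_k$) is clean. (Incidentally, the conjecture as printed says a proportion $\alpha_k$ of vertices is placed in part $1$; this should read $x_k$, the maximising value of $x$, and your use of an unnamed optimising parameter sidesteps the slip.)

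The genuine gap is the one you yourself flag at the end: the reduction of the upper bound from arbitrary directed graphs to the ``structured'' class is not carried out, and that reduction \emph{is} the conjecture. Neither proposed route is close to complete. The symmetrisation route needs a concrete local move whose monotonicity can be certified for all $k$ simultaneously, and no such move is exhibited; it is far from clear that twin-merging or re-orientation increases $\vec{S}_k$-density in general. The removal-lemma/limit route as you sketch it presupposes knowing that near-extremal configurations are close to iterated $\vec{S}_2$-blow-ups, which is circular: the paper only obtains stability \emph{after} the flag-algebra computation has pinned down the exact density, and there is no such computation available for general $k$. You also do not address the uniqueness-and-stability part of the conjecture. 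As written, this is a correct lower bound plus a plausible strategy for the upper bound, not a proof.
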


With a little bit of calculus, we can describe $\alpha_k$ more precisely; the maximum of 
\[ \frac{ k x(1-x)^{k-1}}{1-x^k} \]
occurs when $x=x_k$, where $x_k$ is the unique positive root of the polynomial
\[ (k-1)(t + t^2 + \cdots + t^{k-1}) - 1. \]
Note that $x_k \in [0,1/(k-1)]$ and $x_k \rightarrow 1/(k-1)$ as $k\rightarrow \infty$, as we would expect from our construction. Thus also $\alpha_k \rightarrow 1/e$ as $k \rightarrow \infty$.


\section*{Acknowledgement}
We would like to thank Colin Reid for coming up with the idea that led to the lower bound construction in Theorem~\ref{4.2}.

\end{document}